\PassOptionsToPackage{table}{xcolor}

\documentclass[envcountsame]{llncs}
\newif\ifllncs
\llncstrue
\newif\ifsubmission
\submissionfalse

\usepackage[english]{babel}

\usepackage{tcolorbox}
\usepackage{lineno}
\usepackage{amsmath,amsfonts,amssymb,stmaryrd}
\usepackage[noend]{algpseudocode}
\usepackage{algorithm}
\usepackage{tikz}
\usepackage{tikz-cd}
\usepackage{graphics}
\usepackage{array}
\usepackage{booktabs}
\usepackage{enumitem}
\usepackage{thmtools, thm-restate}

\usepackage[colorlinks=true,linktocpage]{hyperref}
\providecolor{DarkBlue}{rgb}{0,0,.545}
\providecolor{DarkGreen}{rgb}{0,.392,0}
\hypersetup{citecolor=DarkGreen}
\hypersetup{linkcolor=DarkBlue}
\hypersetup{urlcolor=DarkBlue}
\usepackage{cleveref}
\crefname{problem}{problem}{problems}
\Crefname{problem}{Problem}{Problems}
\Crefname{equation}{Eq.}{Eqs.}
\usepackage{xspace}

\newcommand\unnumberedfootnote[1]{%
  \begingroup
  \renewcommand\thefootnote{}\footnote{#1}%
  \addtocounter{footnote}{-1}%
  \endgroup
}

\newcommand{\ie}{i.e.\xspace}

\newcommand{\cryptoalg}[3]{%
	\newcommand{#1}{\hyperref[#3]{\ensuremath{#2}}\xspace}
}

\newcommand{\GB}[1]{\textcolor{purple}{Giacomo: #1}\ }
\newcommand{\Sina}[1]{\textcolor{blue}{Sina: #1}\ }
\newcommand{\Luca}[1]{\textcolor{green}{Luca: #1}\ }
\newcommand{\Guido}[1]{\textcolor{orange}{Guido: #1}\ }

\newcommand{\tomatrix}[2]       {\mathsf{M}_{#1}}%
\newcommand{\dual}[1]           {\widehat{#1}}
\newcommand{\conjugate}[1]      {\overline{#1}}
\newcommand{\adjoint}           {\dagger}
\newcommand{\spanned}[1]         {\left\langle #1 \right\rangle}%

\newcommand{\prob}              {\ensuremath{\mathsf{Pr}}\xspace}
\newcommand{\randgets}          {\ensuremath{\gets_\$}\xspace}

\newcommand{\bigO}              {\ensuremath{O}\xspace}
\newcommand{\action}            {\star}

\newcommand{\secpar}            {\ensuremath{\lambda}\xspace}

\newcommand{\pk}                {\ensuremath{\mathsf{pk}}\xspace}
\newcommand{\sk}                {\ensuremath{\mathsf{sk}}\xspace}

\newcommand{\challenge}         {\ensuremath{\mathsf{ch}}\xspace}
\newcommand{\cmt}               {\ensuremath{\mathsf{com}}\xspace}
\newcommand{\commit}            {\ensuremath{\cmt}\xspace}
\newcommand{\ch}                {\ensuremath{\challenge}\xspace}
\newcommand{\resp}              {\ensuremath{\mathsf{rsp}}\xspace}
\newcommand{\aux}               {\ensuremath{\mathsf{aux}}\xspace}
\newcommand{\msg}               {\ensuremath{\mathsf{msg}}\xspace}

\newcommand{\Prot}              {\ensuremath{\mathsf{\Sigma}}\xspace}
\newcommand{\mycomment}[1]      {\quad \textcolor{gray}{// #1}}

\newcommand{\Adv}               {\ensuremath{\mathsf{Adv}}\xspace}
\newcommand{\Advantage}         {\Adv}
\newcommand{\adv}               {\calA}
\newcommand{\bdv}               {\calB}
\newcommand{\Sim}               {\ensuremath{\mathsf{Sim}}\xspace}

\newcommand{\Gen}               {\ensuremath{\mathsf{Gen}}\xspace}

\newcommand{\calA}              {\ensuremath{\mathcal{A}}\xspace}
\newcommand{\calB}              {\ensuremath{\mathcal{B}}\xspace}

\newcommand{\calD}              {\ensuremath{\mathcal{D}}\xspace}
\newcommand{\calE}              {\ensuremath{\mathcal{E}}\xspace}
\newcommand{\negl}              {\ensuremath{\mathsf{negl}}\xspace}
\newcommand{\poly}              {\ensuremath{\mathsf{poly}}\xspace}
\newcommand{\SIG}               {\ensuremath{\mathsf{SIG}}\xspace}
\newcommand{\EUFCMA}            {\ensuremath{\mathsf{EUF\text{-}CMA}}\xspace}

\newcommand{\alg}               {\ensuremath{{\mathcal{B}_{p,\infty}}}\xspace}
\newcommand{\QuaternionAlgebra} {\ensuremath{{\alg}}\xspace}
\newcommand{\Bpinf}             {\ensuremath{\QuaternionAlgebra}\xspace}
\newcommand{\Zmod}              {\ensuremath{\Z_{/N}}\xspace}

\newcommand{\GL}                {\ensuremath{\mathsf{GL}}\xspace}
\newcommand{\Mat}               {\ensuremath{\mathsf{M}}\xspace}

\newcommand{\GLtwo}             {\ensuremath{\GL_2(\Zmod)}\xspace}
\newcommand{\Mattwo}            {\ensuremath{\Mat_2(\Zmod)}\xspace}

\newcommand{\Q}                 {\ensuremath{\mathbb{Q}}\xspace}
\newcommand{\Z}                 {\ensuremath{\mathbb{Z}}\xspace}

\newcommand{\F}                 {\ensuremath{\mathbb{F}}\xspace}
\renewcommand{\O}               {\ensuremath{\mathcal{O}}\xspace}
\newcommand{\LatticeOfSolutions}{\ensuremath{\Lambda}\xspace}

\newcommand{\commitIsogeny}     {\ensuremath{\varphi_{\commit}}\xspace}
\newcommand{\commitCurve}       {\ensuremath{E_{\commit}}\xspace}
\newcommand{\commitIdeal}       {\ensuremath{I_{\commit}}\xspace}

\newcommand{\commitMatrix}      {\ensuremath{\Mat_{\commit}}\xspace}
\newcommand{\auxIsogeny}        {\ensuremath{\varphi_{\aux}}\xspace}
\newcommand{\auxCurve}          {\ensuremath{E_{\aux}}\xspace}
\newcommand{\respIsogeny}       {\ensuremath{\varphi_{\resp}}\xspace}

\newcommand{\chCurve}           {\ensuremath{E_\ch}\xspace}
\newcommand{\chIsogeny}         {\ensuremath{\varphi_{\challenge}}\xspace}
\newcommand{\generalIsogeny}    {\ensuremath{\zeta}\xspace}
\newcommand{\generalIdeal}      {\ensuremath{I_\zeta}\xspace}
\newcommand{\secretIsogeny}     {\ensuremath{\varphi_{\sk}}\xspace}
\newcommand{\secretIdeal}       {\ensuremath{I_{\sk}}\xspace}

\newcommand{\publicCurve}       {\ensuremath{E_\pk}\xspace}

\newcommand{\hint}[1]          {\ensuremath{\mathcal{H}_{#1}}\xspace}
\newcommand{\hintsim}[1]       {\hyperref[dist:hint_sim]{\ensuremath{\hint{#1}^{\sf sim}}}\xspace}

\newcommand{\hintunif}[1]      {\hyperref[dist:hint_unif]{\ensuremath{\hint{#1}^{\sf unif}}}\xspace}
\newcommand{\hintdistr}        {\ensuremath{\mathcal{H}}\xspace}

\newcommand{\probcond}[2]{\ensuremath{\Pr\left[\begin{aligned}#1\end{aligned}\;\middle|\; \begin{aligned}#2\end{aligned}\right]}}

\newcommand{\degreeBound}   {\ensuremath{B_{\resp}}\xspace}

\newcommand{\torsion}       {\ensuremath{N}\xspace}
\newcommand{\level}         {\ensuremath{N}\xspace}

\newcommand{\etorsion}      {\ensuremath{e_\mathsf{tor}}\xspace}

\newcommand{\respDeg}       {\ensuremath{D_{\resp}}\xspace}

\newcommand{\tor}           {\mathsf{tor}}
\newcommand{\B}             {\ensuremath{\mathcal{B}}\xspace}

\newcommand{\Ppk}           {\ensuremath{P_{\pk}}\xspace}
\newcommand{\Qpk}           {\ensuremath{Q_{\pk}}\xspace}
\newcommand{\Paux}          {\ensuremath{P_{\aux}}\xspace}
\newcommand{\Qaux}          {\ensuremath{Q_{\aux}}\xspace}
\newcommand{\Ppktor}        {\ensuremath{P_{\pk}}\xspace}
\newcommand{\Qpktor}        {\ensuremath{Q_{\pk}}\xspace}
\newcommand{\Pcmttor}       {\ensuremath{P_{\commit}}\xspace}
\newcommand{\Qcmttor}       {\ensuremath{Q_{\commit}}\xspace}
\newcommand{\Pcomtor}       {\Pcmttor}
\newcommand{\Qcomtor}       {\Qcmttor}

\newcommand{\OrderEnd}                      {\ensuremath{\O}\xspace}

\newcommand{\End}                           {\ensuremath{\mathsf{End}}\xspace}

\newcommand{\norm}                          {\ensuremath{\mathsf{n}}\xspace}
\newcommand{\normklpt}[1]                   {\norm{(#1)}}
\newcommand{\tr}                            {\ensuremath{\mathrm{tr}}\xspace}

\newcommand{\publicLevelStructure}          {\ensuremath{S_\pk}\xspace}
\newcommand{\chLevelStructure}              {\publicLevelStructure}
\newcommand{\commitLevelStructure}          {\ensuremath{S_{\commit}}\xspace}
\newcommand{\chMatrix}                      {\ensuremath{M_{\ch}}\xspace}

\newcommand{\curvelevel}                    {\ensuremath{\mathbb{E}}\xspace}

\newcommand{\SQI}                       {\ensuremath{\mathrm{SQI}}\xspace}
\newcommand{\SQIsign}                   {\ensuremath{\mathrm{SQIsign}}\xspace}
\newcommand{\SQInstructor}              {\ensuremath{\mathrm{SQInstructor}}\xspace}
\newcommand{\SQItorsion}                {\SQInstructor}
\newcommand{\SQIprime}                  {\ensuremath{\mathrm{SQIprime}}\xspace}

\newcommand{\CRT}                       {\ensuremath{\mathsf{CRT}}\xspace}
\newcommand{\OneEndp}                   {\ensuremath{\mathbf{OneEnd}_p}\xspace}
\newcommand{\ROneEnd}                   {\mathcal{R}_{\Gamma\text{-}\mathrm{OneEnd}}}

\newcommand{\hintOneEndp}               {\ensuremath{\mathbf{hint}\text{-}\OneEndp}\xspace}
\newcommand{\hintdist}                  {\ensuremath{\mathbf{hint}\text{-}\mathbf{dist}}\xspace}

\cryptoalg{\SigningKLPT}				{\mathsf{SigningKLPT}}{alg:signing_klpt}
\cryptoalg{\GenericKLPT}				{\mathsf{GenericKLPT}}{alg:generic_klpt}
\cryptoalg{\ScalarSigningKLPT}          {\mathsf{Scalar}\allowbreak\mathsf{Signing}\allowbreak\mathsf{KLPT}}{alg:scalar_klpt}
\cryptoalg{\BorelSigningKLPT}			{\mathsf{Borel}\allowbreak\mathsf{Signing}\allowbreak\mathsf{KLPT}}{alg:borel_klpt}
\cryptoalg{\InitialRerandomization}     {\ensuremath{\mathsf{Initial}\allowbreak\mathsf{Rerandomization}}}{alg:scalar_rerandomization}
\newcommand{\RepresentInteger}			{\ensuremath{\mathsf{Represent}\allowbreak\mathsf{Integer}}\xspace}
\newcommand{\StrongApproximation}		{\ensuremath{\mathsf{Strong}\allowbreak\mathsf{Approximation}}\xspace}
\newcommand{\Full}				        {\ensuremath{\mathsf{Full}}}
\cryptoalg{\FullRepresentInteger}       {\Full\allowbreak\RepresentInteger}{alg:FullRepresentInteger}
\cryptoalg{\FullStrongApproximation}    {\Full\allowbreak\StrongApproximation}{alg:FullStrongApproximation}
\cryptoalg{\GeneralizedScalarSigningKLPT}{\mathsf{Generalized}\allowbreak\mathsf{Scalar}\allowbreak\mathsf{Signing}\allowbreak\mathsf{KLPT}}{alg:gen_scalar_klpt}
\cryptoalg{\IdealEichlerNorm}           {\mathsf{Ideal}\allowbreak\mathsf{Eichler}\allowbreak\mathsf{Norm}}{alg:IdealEichlerNorm}
\cryptoalg{\IdealSuborderEichlerNorm}   {\mathsf{Ideal}\allowbreak\mathsf{Suborder}\allowbreak\mathsf{Eichler}\allowbreak\mathsf{Norm}}{alg:IdealSuborderEichlerNorm}
\cryptoalg{\EquivalentPrimeIdeal}       {\mathsf{Equivalent}\allowbreak\mathsf{Prime}\allowbreak\mathsf{Ideal}}{alg:EquivalentPrimeIdeal}

\cryptoalg{\ComputeSolutions}			{\mathsf{InStruct}\-\mathsf{Ideal}}{alg:compute_solutions}
\cryptoalg{\ResponseIdeal}			{\mathsf{Response}\-\mathsf{Ideal}}{alg:response_ideal}

\cryptoalg{\FIDIO}                      {\mathsf{FIDIO}}{def:fidio}
\cryptoalg{\UTO}                        {\Gamma\text{-}\mathsf{UTO}}{def:uto}

\renewcommand{\GB}[1]{}
\renewcommand{\Sina}[1]{}
\renewcommand{\Luca}[1]{}
\renewcommand{\Guido}[1]{}

\newcommand{\fakepar}[1]{\noindent\textit{\underline{#1}.}}

\newsavebox\myboxA
\newsavebox\myboxB
\newlength\mylenA

\newcommand*{\conjugated}[2][0.75]{%
    \sbox{\myboxA}{$\m@th#2$}%
    \setbox\myboxB\null%
    \ht\myboxB=\ht\myboxA%
    \dp\myboxB=\dp\myboxA%
    \wd\myboxB=#1\wd\myboxA%
    \sbox\myboxB{$\m@th\overline{\copy\myboxB}$}%
    \setlength\mylenA{\the\wd\myboxA}%
    \addtolength\mylenA{-\the\wd\myboxB}%
    \ifdim\wd\myboxB<\wd\myboxA%
        \rlap{\hskip 0.5\mylenA\usebox\myboxB}{\usebox\myboxA}%
    \else
        \hskip -0.5\mylenA\rlap{\usebox\myboxA}{\hskip 0.5\mylenA\usebox\myboxB}%
    \fi
}

\newcommand{\lto}{\longrightarrow}
\newcommand{\ol}        {\ensuremath{\overline}\xspace}

\newcommand{\bbE}       {\ensuremath{\mathbb E}\xspace}

\newcommand{\Fpsq}      {\ensuremath{\mathbb F_{p^2}}\xspace}

\newcommand{\calO}      {\ensuremath{\mathcal{O}}\xspace}
\newcommand{\Norm}{\norm}
\newcommand{\Hom}{\mathrm{Hom}}

\newcommand{\covolume}{\mathrm{Cov}}

\newcommand{\com}{\cmt}
\newcommand{\rsp}{\mathrm{rsp}}
\newcommand{\chl}{\ch}

\newcommand{\Id}{\mathrm{Id}}

\newcommand{\smt}[4]{\left[\begin{smallmatrix}{#1}&{#2}\\{#3}&{#4}\end{smallmatrix}\right]}
\newcommand{\vv}[2]{\left(\begin{smallmatrix}{#1}\\{#2}\end{smallmatrix}\right)}

\newlist{thmlist}{enumerate}{1}
\setlist[thmlist]{label=(\roman{thmlisti}),
                  ref=\thetheorem(\roman{thmlisti}),
                  noitemsep}
\newlist{lemlist}{enumerate}{1}
\setlist[lemlist]{label=(\roman{lemlisti}),
                  ref=\thelemma(\roman{lemlisti}),
                  noitemsep}
\newlist{proplist}{enumerate}{1}
\setlist[proplist]{label=(\roman{proplisti}),
                  ref=\theproposition(\roman{proplisti}),
                  noitemsep}
\Crefname{listthm}{Theorem}{Theorems}
\Crefname{listlem}{Lemma}{Lemmas}
\Crefname{listprop}{Proposition}{Propositions}

\algrenewcommand\algorithmicrequire{\textbf{Input: }}
\algrenewcommand\algorithmicensure{\textbf{Output: }}

\newcommand{\idprot} {\ensuremath{\Prot_\SQItorsion}\xspace}
\newcommand{\sigprot}{\ensuremath{\mathsf{SIG}[\idprot]}\xspace}
\newcommand{\idprotoriginal}{\ensuremath{\Prot_\SQI}\xspace}

\newcommand{\sqinstrctrepo}{\href{https://github.com/SQInstructors/SQInstructor}{github.com/SQInstructors/SQInstructor}}
\newcommand{\dataurl}{\sqinstrctrepo}

\newcommand{\klpturl}{\sqinstrctrepo}

\title{The SQInstructor: a guide to SQIsign and the Deuring Correspondence with level structures}

\author{
    Giacomo Borin\inst{1,2}
    \and
    Luca De Feo\inst{1}
    \and
    Guido Lido\inst{3}
    \and
    Sina Schaeffler\inst{1,4}
}

\institute{
  IBM Research Europe, Switzerland
  \and
  University of Zurich, Switzerland
  \and
  University of Roma Tor Vergata, Italy
  \and
  ETH Zurich, Switzerland
}
\authorrunning{Giacomo Borin, Luca De Feo, Guido Maria Lido and Sina Schaeffler}

\begin{document}

\maketitle\unnumberedfootnote{Author list in alphabetical order; see \url{https://ams.org/profession/leaders/CultureStatement04.pdf.}}

\begin{abstract}
We explore the use of level structures to generalize the SQIsign signature scheme.
We give a general framework where, given the public key and the commitment, the challenge is to exhibit an isogeny between them with an additional requirement, namely to map a chosen level structure to another.
We then instantiate the framework using 1-dimensional and 2-dimensional isogenies.
In doing that we provide a new explicit Deuring correspondence for supersingular elliptic curves with level structures and solve new constrained norm equations.
\end{abstract}

\section{Introduction}
Isogeny-based cryptography is a promising candidate for post-quantum
cryptography, with several proposals for key exchange
and digital signatures, but also for more advanced primitives such as
threshold and ring signatures.

A mainstay of isogeny-based cryptography is
\SQIsign~\cite{AC:DKLPW20}, which is so far the most compact signature
scheme participating in NIST's post-quantum standardization
process~\cite{NIST-call}. %
At its heart, it is based on the \emph{Deuring correspondence}, a
mathematical and algorithmic equivalence between supersingular curves
and orders of a quaternion algebra.

The original instantiation of \SQIsign constructs signatures by
solving some quadratic equations in the quaternion algebra, using an
algorithm known as KLPT~\cite{klpt,EC:DLLW23}. %
This approach gives very compact signatures, with a relatively simple
verification procedure, but a quite involved and slow signing
procedure. %
It also presents an intrinsic difficulty in proving EUF-CMA security,
owing to the limitations of KLPT.
A paradigm shift for \SQIsign,
first proposed in~\cite{EC:DLRW24} and then improved~\cite{AC:BDDLMP24,AC:DupFou24,AC:NOCCIL24}, 
came with the 
introduction of \emph{higher-dimensional (HD) isogenies}.
Building upon a line of work started with the SIDH
attacks~\cite{EC:CasDec23,EC:MMPPW23,EC:Robert23},
these techniques significantly sped up \SQIsign and improved its provable security.

Taking a step back, 
\emph{level structures} are bases of torsion subgroups of elliptic curves, up to 
transformation by a well chosen group of matrices.
They have been used to prove indistinguishability of certain distributions 
on supersingular isogeny graphs~\cite{EC:BCCDFL23}, and to classify hardness assumptions in isogeny-based cryptography~\cite{EC:DeFFouPan24}.
However level structures are implicitly featured in many more
isogeny-based protocols, for example in SIKE~\cite{PQCRYPTO:JaoDeF11}
or FESTA~\cite{AC:BasMaiPop23}. %
Even \SQIsign itself can be understood in terms of level structures. %
Indeed, a \emph{response} in the \SQIsign interactive identification
protocol is generally understood to be an isogeny connecting a
\emph{public key curve} to a \emph{challenge curve}. %
The challenge curve is generated as the codomain of a
\emph{challenge isogeny}, which is itself defined in terms of its
\emph{kernel}, a form of level structure. %
Thus, an equivalent characterization of a \SQIsign signature is as an
isogeny (the composition of response and challenge isogeny) vanishing
on a level structure (the kernel). %
This point of view is especially visible in the variant of \SQIsign
named \SQIprime~\cite{AC:DupFou24}.

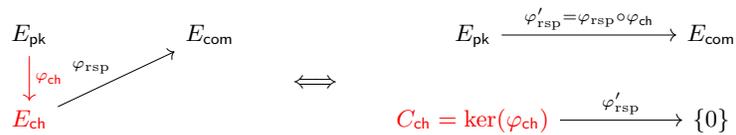
\begin{figure}[!ht]
	\centering
$$
\intextsep 0pt
\begin{tikzcd}[]
	E_{\pk} \arrow[d, "\varphi_{\ch}", red] 
	&& E_{\com} 
	& \, \arrow[d, phantom, "\iff"] &
	E_{\pk} \arrow[rr, "\varphi_{\rsp}' = \varphi_\rsp \circ \varphi_\ch	"] 
	&& E_{\com} 
	\\
	{\color{red}E_{\ch}} \arrow[urr, "\varphi_{\rsp}"] 
	&& 
	&\, &
	{\color{red}C_{\ch}=\ker(\varphi_{\ch})} \arrow[rr, "\varphi_{\rsp}' "] 
	&& \{0\}
\end{tikzcd}
$$
\caption{\small SQIsign simplified (generation of key and of commitment is omitted)}
\label{fig:SQISign_simplified_1}
\end{figure}
With this point of view, it is very natural to consider signatures where the response is requested to send a certain cyclic subgroup (or a suitable level structure) to another one, instead of annihilating it. This is exactly the idea of our new proposal: \SQItorsion is a general framework following this idea, with freedom to choose the kind of level structure 
and whether the response should be computed with one-dimensional or 
with HD techniques.

\begin{figure}[!ht]
	\centering
$$
\begin{tikzcd}[column sep=large]
	E_{\pk} \arrow[rr, "\varphi_{\rsp}"] 
	&& E_{\com} 
	\\
	{\color{red} C_{\ch} \,\,\text{ or }\,\, \phi_\ch } \arrow[rr, mapsto] 
	&& C_\com \,\,\text{ or }\,\, \phi_\com 
\end{tikzcd}
$$
\abovecaptionskip -1pt
\caption{SQInstructor simplified (generation of key and of commitment is omitted)}
\label{fig:SQISign_simplified_2}
\end{figure}
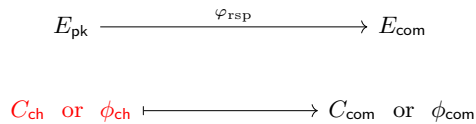

\paragraph{Contributions.}
In this work we instantiate and explore in detail the use of level structures 
for the \SQItorsion protocol. Progressively in each section we:
\begin{itemize}
    \item Generalize the well-known Deuring Correspondence to the setting of
        supersingular elliptic curves with level structure, with some concrete 
        examples. Remarkably, we show how to make this correspondence explicit, 
        even for large non-smooth levels.
    \item We describe the new identification scheme \SQItorsion in full generality, 
        leaving open the choice of the level structure and isogeny 
        representation for the specific instantiation of the protocol.
    \item Instantiating the framework using 1-dimensional isogenies, \emph{a 
        la}~\cite{AC:DKLPW20} requires solving specific \emph{constrained} norm 
        equations in the quaternion algebra \Bpinf.
        We show how to modify and combine previous algorithms in the literature 
        to efficiently solve these equations in practice. We believe this is of
        independent interest. We also give a proof-of-concept implementation of 
        these algorithms.
    \item We instantiate the framework using 2-dimensional isogenies, prove its
        security under assumptions similar to those of \SQIsign, and give an 
        optimized implementation based on~\cite{NISTPQC-ADD-R2:SQIsign24}. 
\end{itemize}

\paragraph{Related work.}
Arpin~\cite{arpin-sarah-borel-lvl-structures} initiated the study of
supersingular isogeny graphs with Borel level structure and
generalized the Deuring correspondence to them. %
Here we extend the correspondence to even more general level
structures, and explain for the first time how to build SQIsign-like
identification protocols from it.

In~\cite{EC:DeFFouPan24}, level structures were used to classify
SIDH-like hardness assumptions and prove reductions between them,
showing in particular that the SIDH
attacks~\cite{EC:CasDec23,EC:MMPPW23,EC:Robert23} apply to a broader
class of problems than initially thought. %
Although we use level structures in this work, we only do so
constructively and we never rely on the kind of assumptions studied
in~\cite{EC:DeFFouPan24}. %
The security of \SQInstructor rests on the same foundations as SQIsign
itself.

Basso, De Feo, Patranabis, Radulescu and Wesolowski~\cite{forensic} introduce an abstract framework for
SQIsign-like schemes called \emph{forensic categories}. %
Among the proposed instantiations of the framework, one is based on
level structures and yields an identification protocol equivalent to
\SQInstructor's\footnote{Precisely, equivalent to the 1-dimensional
  variant of \Cref{sec:klpt}.} %
Their work is complementary to ours: whereas they focus on protocols
and show that one can construct ring signatures and chameleon hashes
from any forensic category, we focus on algorithms, carefully
generalizing the Deuring correspondence and explaining how to
implement \SQInstructor's 1- and 2-dimensional variants.

\paragraph{Acknowledgments.}
We thank Andrea Basso, Sikhar Patranabis, Ilinca Radulescu and Benjamin Wesolowski for the useful discussions, and Antonin Leroux for pointers to KLPT variants.

 The third author is supported by the MIUR Excellence Department Project MatMod@TOV awarded to the Department of Mathematics, University of Rome Tor Vergata, by the PRIN PNRR 2022 ``Mathematical Primitives for Post Quantum Digital Signatures'' and by GNSAGA - INdAM. All other authors are supported by SNSF Consolidator Grant CryptonIs 213766.

\section{The Deuring Correspondence in Cryptography}
\label{sec:preliminaries}

We recall here some basic facts about the Deuring correspondence and
its applications to cryptography, in particular to SQIsign. %
We assume some familiarity with elliptic curves, isogenies and
quaternion algebras. %
For more details, we refer
to~\cite{silverman2009arithmetic,quaternion_book,de2017mathematics,boneh2020graduate}.

Throughout this document we fix a prime $p$ and we write $\F_p$ for
the finite field with $p$ elements, $\F_{p^2}$ for a field extension
of degree $2$, and $\overline{\F}_p$ for their algebraic closure.  %
Finally, we write $\Bpinf$ for the quaternion algebra ramified at $p$
and at infinity. %

\subsubsection{An equivalence of categories.}
If $E,E'$ are elliptic curves, we write $\Hom(E,E')$ for the group of
homomorphisms $E\to E'$, and $\End(E) := \Hom(E,E)$. %
As a $\Z$-module, $\Hom(E,E')$ has finite rank, and the usual degree
map is a positive definite quadratic form on it. %
In short, $\Hom(E,E')$ is a Euclidean lattice. %
On top of that, composition of morphisms makes $\End(E')$ a ring and
$\Hom(E,E')$ a left $\End(E')$-module.

Deuring proved that if $E,E'$ are supersingular curves in
characteristic $p$, then $\Hom(E,E')$ has rank $4$ and $\End(E')$ is
isomorphic to a maximal order in $\Bpinf$. %
But his correspondence goes even deeper: it shows that isomorphism
classes of supersingular elliptic curves, up to Galois conjugation in
$\F_{p^2}/\F_p$, are in one-to-one correspondence with isomorphism
types of maximal orders of $\Bpinf$ (both sets are finite), and that
the integral left ideals of $\End(E)$ are in one-to-one
correspondence with the isogenies of $E$, up to post-composition with
isomorphisms. %
In modern terms, fixing a curve $E_0$ and an order
$\O_0 \simeq \End(E_0)$, the functor $\Hom(-,E_0)$ defines an
equivalence of categories between supersingular elliptic curves with
isogenies and invertible left $\O_0$-modules with non-zero
homomorphisms.  %

\begin{theorem}
  \label{th:deuring-corr}
  Let $E_0$ be a supersingular curve and let $\rho:E_0\to E$ be an
  isogeny. %
  Define the ideal
  \begin{equation}
    \label{eq:deuring-corr}
    I_{\rho} := I_{\rho,E} := \{ \psi \circ \rho : \psi \in \Hom(E,E_0) \}  \subset \End(E_0) \ ,
  \end{equation}
  then $\norm(I_\rho) = \deg(\rho)$.  %
  Two isogenies $\rho,\rho'$ define the same ideal if and only if
  $\rho'=\iota\circ\rho$ for some isomorphism $\iota$. %
  Any integral ideal of $\End(E_0)$ arises this way.
\end{theorem}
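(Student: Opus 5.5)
We argue through the kernel correspondence: to each integral left ideal $I\subseteq\O_0:=\End(E_0)$ we attach the finite subgroup $E_0[I]:=\bigcap_{\alpha\in I}\ker\alpha$, and to each isogeny $\rho$ its kernel. We first show $E_0[I_\rho]=\ker\rho$; the other claims then follow. Throughout we may use Deuring's classical facts recalled above: $\Hom(E,E_0)$ has rank $4$, $\O_0$ is a maximal order, and the norm of a left $\O_0$-ideal is $\gcd\{\deg\alpha:\alpha\in I\}$ (equivalently, $\norm(I)^2=[\O_0:I]$).

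\textbf{Step 1 (kernel).} The inclusion $\ker\rho\subseteq E_0[I_\rho]$ is clear, since every element of $I_\rho$ has the form $\psi\circ\rho$. For the reverse inclusion, we use that $\Hom(E,E_0)$ separates points of $E$. Indeed, $\hat\rho\in\Hom(E,E_0)$, and for a prime $\ell$ the module $\Hom(E,E_0)\otimes\Z_\ell$ surjects onto $\Hom(E[\ell^k],E_0[\ell^k])$ for $\ell\neq p$, by Tate's theorem in the supersingular case. Hence for any $P\notin\ker\rho$ there is some $\psi$ with $\psi(\rho(P))\neq0$. The $p$-part (inseparable kernels) is handled separately by factoring through Frobenius, since the Frobenius of $E_0$ lies in $\O_0$ up to isomorphism.

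\textbf{Step 2 (norm).} We have $\norm(I_\rho)=\gcd_\psi\deg(\psi\rho)=\deg\rho\cdot\gcd_\psi\deg\psi$. It remains to show that $\gcd\{\deg\psi:\psi\in\Hom(E,E_0)\}=1$. For each prime $\ell$ there is an isogeny $E\to E_0$ of degree prime to $\ell$, because the $\ell'$-isogeny graph is connected for any prime $\ell'\neq\ell,p$. Thus the gcd is $1$. For the $p$-part we use Frobenius twists, noting that $E_0$ and $E$ are defined over $\F_{p^2}$.

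\textbf{Step 3 (uniqueness and surjectivity).} If $I_\rho=I_{\rho'}$, then by Step 1 we get $\ker\rho=\ker\rho'$, so $\rho'=\iota\circ\rho$ with $\iota$ an isomorphism. Conversely, $I_{\iota\rho}=I_\rho$ because $\psi\mapsto\psi\iota^{-1}$ is a bijection. For surjectivity, let $I$ be an integral ideal and let $\rho$ be the separable isogeny with kernel $E_0[I]$, composed with the appropriate power of Frobenius if $p\mid\norm(I)$. Each $\alpha\in I$ kills $\ker\rho$, so it factors as $\alpha=\psi\rho$, which gives $I\subseteq I_\rho$. Step 1 applied to $I$ shows $E_0[I]\supseteq\ker\rho$, and comparing norms gives $\#E_0[I]=\norm(I)$, which we prove locally. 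Indeed, $\O_0\otimes\Z_\ell\cong M_2(\Z_\ell)$, the ideal $I_\ell$ is principal, $I_\ell=\O_{0,\ell}\beta$, and $E_0[I]_\ell=\ker\beta$ has order $\deg\beta=\norm(I)_\ell$. Hence $\norm(I_\rho)=\deg\rho=\norm(I)$. An inclusion of left ideals of equal norm in a maximal order is an equality, so $I=I_\rho$.

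\textbf{Main obstacle.} The main obstacle is Step 1 together with the local computation $\#E_0[I]=\norm(I)$, both of which rest on Tate's isogeny theorem and on the local principality of ideals of maximal orders. The prime $p$ requires care: there we would work with the unique ramified maximal order at $p$ and with Frobenius.
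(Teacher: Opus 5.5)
The paper does not prove this statement; it quotes it as Deuring's classical theorem. Your kernel-ideal argument is nonetheless the strategy the paper uses for its level-structure analogue (Theorem~\ref{thm:Deuring_with_level_structure} and Theorem~\ref{thm:Deuring_with_level_structure2}, proved in the appendix). There the norm is a $\gcd$ of degrees, controlled by elements of coprime degree. The order of $\bigcap_{\alpha\in I}\ker\alpha$ is computed locally through $\O_0\otimes\Z_\ell\cong M_2(\Z_\ell)$ for $\ell\neq p$. The quotient isogeny $\gamma$ satisfies $I\subseteq I_\gamma$, with equality by comparing indices, and uniqueness comes from reading the kernel off the ideal.

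There are two instructive differences. First, instead of separating points via Tate's theorem, the paper takes $\phi_1,\phi_2$ with $\gcd(\deg\phi_1,\deg\phi_2)=1$, so that $\ker(\hat\phi_1\circ\rho)\cap\ker(\hat\phi_2\circ\rho)=\ker\rho$. This uses only the ingredient of your Step~2, and it also works for scheme-theoretic kernels, so the inseparable part comes for free. Second, for surjectivity the paper never meets $p$. It first replaces $I$ by an equivalent ideal of index prime to $Np$ (local principality plus a CRT/density argument). It then shows that being of the form $I_\rho$ is stable under equivalence.

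Handling $p$ directly is fine, but two steps are inaccurate as written.

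\emph{Uniqueness.} Equal point-kernels do not give $\rho'=\iota\circ\rho$ when $\rho$ is inseparable. You must add $\deg\rho=\norm(I_\rho)=\norm(I_{\rho'})=\deg\rho'$, which follows from Step~2. Then the inseparable degrees $\deg\rho/\#\ker\rho$ agree, say both equal $p^k$. Both maps are then separable isogenies with the same kernel composed after $\pi^k\colon E_0\to E_0^{(p^k)}$, hence differ by an isomorphism.

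\emph{Surjectivity.} The claim $\#E_0[I]=\norm(I)$ is false when $p\mid\norm(I)$, because $E_0$ has no points of order $p$. Your $\ell\neq p$ computation only shows that $\#E_0[I]$ is the prime-to-$p$ part of $\norm(I)$. To finish, let $k=v_p(\norm(I))$. Every $\alpha\in I$ has $p^k\mid\deg\alpha$, since $\norm(I)$ is the $\gcd$ of the degrees. Its separable degree $\#\ker\alpha$ is prime to $p$, so $\alpha$ factors through $\pi^k$. Hence $\rho=\phi\circ\pi^k$, with $\phi$ separable of kernel $\pi^k(E_0[I])$, satisfies $I\subseteq I_\rho$ and $\deg\rho=\norm(I)$. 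No analysis of the ramified order at $p$ is needed.

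Two smaller points. Frobenius is an endomorphism of $E_0$ only when $j(E_0)\in\F_p$. The Frobenius remark in Step~2 is superfluous, since an $\ell'$-power isogeny with $\ell'\neq p$ already has degree prime to $p$.
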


In what follows, we shall write $I_\rho\action E_0$ for the
isomorphism class of $E$, and sometimes abuse notation and simply
write $E = I_\rho\action E_0$. %
Two ideals $I,J$ yield the same class $I\action E_0 = J\action E_0$ if
and only if they are in the same left ideal class, i.e., if and only
if $I = J\alpha$ for some $\alpha \in \Bpinf$. %
Equivalently, we may write $[I]$ for the ideal class of $I$ and
$[I]\action E_0$ for the unique isomorphism class of $I\action E_0$.

\subsubsection{An algorithmic tool.}
Deuring's correspondence also has an algorithmic content that
isogeny-based cryptography has helped unearth. %
It can be summarized by the mantra: \emph{some problems are hard for
  curves, every problem becomes easy in the quaternions.}

Concretely, the foundations of isogeny-based cryptography rest on the
famous \emph{isogeny problem}:

\begin{problem}[Supersingular Isogeny Problem]
  \label{prob:isogeny}
  Given supersingular elliptic curves $E,E'/\F_{p^2}$, compute an
  isogeny between them.
\end{problem}

This problem is assumed to be exponentially hard, even for quantum
computers, when at least one of $E$ or $E'$ is chosen at random. %
But the analogous problem for quaternions, that is to compute a
\emph{connecting ideal} between two given maximal orders of $\Bpinf$,
is solved by basic integer linear algebra. %
In fact, Kohel, Lauter, Petit and Tignol~\cite{klpt} showed that even
a more restricted variant of the problem, where the connecting ideal
is requested to have prime-power norm, can be solved in polynomial
time. %
Their algorithm has become a staple of isogeny-based cryptography,
under the name of KLPT. %

This disparity between curves and quaternions suggests a strategy to
solve the isogeny problem: because curves are in (almost) one-to-one
correspondence with maximal order types, if we could effectively
evaluate this correspondence, we could translate isogeny problems to
quaternion problems, solve them and bring the solution back to the
curves. %
The intuition checks out and leads to the inevitable conclusion that
computing endomorphism rings of supersingular elliptic curves is
generically hard. %
The problem below is indeed proven polynomial-time equivalent to the
isogeny problem for a random input curve, assuming
GRH~\cite{FOCS:Wesolowski21}.

\begin{problem}[Supersingular Endomorphism Ring Problem]
  \label{prob:endring}
  Given a supersingular elliptic curve $E$ defined over $\F_{p^2}$,
  find a basis for $\End(E)$.
\end{problem}

On the other hand, the ``converse'' problem is now known to be easy:
given a maximal order $\O$ of $\Bpinf$, a supersingular curve such that
$\End(E) \simeq \O$ may be computed as follows:
\begin{enumerate}
\item Start with a pair $(E_0,\O_0)$ such that
  $\End(E_0) \simeq \O_0$;\footnote{For any $p$, there exists a
    polynomial-time algorithm to produce a supersingular curve
    together with its endomorphism ring.  Note this does not
    contradict the fact that computing endomorphism rings of random
    curves is hard.}
\item Use KLPT to compute a connecting ideal $I$ between $\O_0$ and
  $\O$ of norm $\ell^n$ for some small prime $\ell$;
\item Use an \emph{ideal-to-isogeny} algorithm~\cite{EC:EHLMP18} to
  compute $E = I\action E_0$ by evaluating an isogeny of degree
  $\ell^n$. Then $\End(E)\simeq \O$.
\end{enumerate}

Or at least this is how one would have solved the problem before
2022. %
The discovery of efficient attacks on
SIKE~\cite{EC:CasDec23,EC:MMPPW23,EC:Robert23} brought several
innovations in isogeny-based cryptography, including more efficient
ideal-to-isogeny
algorithms~\cite{EPRINT:PagRob23,AC:BDDLMP24,AC:NOCCIL24,qlapoti}. %
These leverage \emph{higher dimensional isogenies} to bypass the
constraint of small-prime-power isogenies and thus dispense with KLPT
entirely.  %
There also exist \emph{isogeny-to-ideal} algorithms, though they are
currently more limited in scope (see~\cite{AC:CIIKLP23}). %
We will not discuss them as they are not necessary for our purpose.

\subsubsection{From the correspondence to cryptography.}
Summarizing, it is easy to construct supersingular curves knowing
their endomorphism rings, but once we throw away this information it
is hard to recover it.  %
This defines at the very least a one-way function, but hopefully
something more. %
The first to observe that one could use the knowledge of the
endomorphism ring as a ``trapdoor'' in a signature scheme were
Galbraith, Petit and Silva (GPS)~\cite{JC:GalPetSil20}, but it is only
with SQIsign~\cite{AC:DKLPW20} that the Deuring correspondence truly
showed its full potential. %

Both protocols define public parameters $(E_0,\O_0)$ with
$\O_0\simeq\End(E_0)$. %
Both generate the secret key as a random ideal $I_\sk\subset\O_0$, and
define the public key as $\publicCurve := I_\sk\action E_0$. %
Both are based on sigma-protocols proving knowledge of
$\End(\publicCurve)$, but while GPS resembles the classic GMW protocol
for graph isomorphism~\cite{FOCS:GolMicWig86}, SQIsign uses a
radically different strategy achieving exponentially low soundness
error. %
Seen from afar, SQIsign's interactive protocol proceeds as follows:
\begin{enumerate}
\item The prover generates a random ideal $I_\cmt \subset \O_0$,
  computes $\commitCurve := I_\cmt\star E_0$ and sends $\commitCurve$
  to the verifier;
\item\label{step:chall} The verifier responds with a challenge
  $P_\chl \in \publicCurve[N]$, a point of fixed (and smooth) order
  $N$ on $\publicCurve$;
\item The prover uses its knowledge of
  $\Hom(\publicCurve,\commitCurve) \simeq \overline{I_\sk}I_\cmt$ to
  produce an isogeny $\respIsogeny:\publicCurve\to\commitCurve$ such
  that $\respIsogeny(P_\chl) = 0$, sends $\respIsogeny$ to the verifier.
\item The verifier checks that $\respIsogeny$ matches all constraints.
\end{enumerate}

\begin{figure}
\centering
  \begin{tikzpicture}
    \node[anchor=south] (E0) at (0,0){$E_0$};
    \node[anchor=west] (com) at (3,-1){$\commitCurve$};
    \node[anchor=east] (pk) at (-3,-1){$\publicCurve$,$\textcolor{blue}{P_\ch}$};

    \draw[red,-latex,dashed]
    (E0) edge node[auto] {$\commitIsogeny$} (com)
    (E0) edge node[auto,swap] {$\secretIsogeny$} (pk);
    \draw[-latex]
    (pk) edge
    node[above] {$\respIsogeny$}
    node[below] {$\respIsogeny( P_\ch ) = 0$}
    (com);
  \end{tikzpicture}
\caption{\SQIsign identification protocol. Secrets dashed
    in red. Challenge in blue.}
\label{fig:triangle_sqi}
\end{figure}
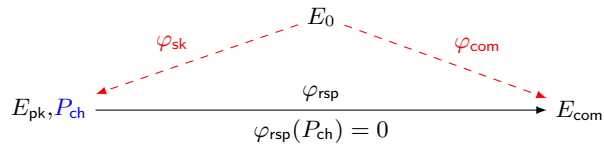

The key difference between GPS and SQIsign lies in
Step~\ref{step:chall}. %
There are $\approx N^2$ choices for $P_\chl$, though linearly
equivalent points are effectively the same challenge, so the number of
distinct challenges is rather $\approx N$.  %
Because the protocol is proven 2-special sound, it has soundness error
$\approx 1/N$, as opposed to $1/2$ for GPS. %
If $N$ is exponentially large, it thus suffices to run a single
iteration of the identification protocol to achieve the desired level
of security.

At first sight, using torsion points as challenges may seem a hack
with little relation to the Deuring correspondence. %
Looking closer we notice that the set
$\Lambda \subset \Hom(\publicCurve,\commitCurve)$ of isogenies
vanishing on $P_\chl$ forms a sublattice.  %
Precisely, letting $\chIsogeny:\publicCurve\to\chCurve$ be the isogeny
such that $\ker\chIsogeny = \langle P_\ch\rangle$, we have
\begin{equation}
  \Lambda = \{ \psi\circ\chIsogeny : \psi\in\Hom(\chCurve,\commitCurve) \}.
\end{equation}
Some readers will find our explanation unnecessarily contrived: we may
as well have said that $\chIsogeny$ is the challenge sent by the
verifier, and that the response is computed from
$\Hom(\chCurve,\commitCurve)$. %
However this point of view, which is the traditional way of presenting
SQIsign, misses two important points: First, if $\chIsogeny$ is the
challenge, we need an isogeny-to-ideal algorithm, somewhat
complicating the protocol. %
This was already observed in~\cite{AC:DupFou24}.  %
Second, there are potentially other choices of sublattices
$\Lambda \subset \Hom(\publicCurve,\commitCurve)$ that lead to
interesting variants of SQIsign. %
This is the topic of the present work.

\subsubsection{Roadmap.}
We start by characterizing which restrictions on $\Hom(E,E')$ lead to
a generalized Deuring correspondence. %
It is tempting to impose that a point $P\in E[N]$ is mapped to some
other point $P\in E'[N]$, but this restriction does not define a
sublattice of $\Hom(E,E')$. %
We will see instead in the next section that the appropriate concept
is that of mapping elliptic curves with \emph{level structures},
extending previous work of Arpin on \emph{Borel level
  structures}~\cite{arpin2024generalized}.

The rest of the paper will be devoted to adapting SQIsign and its
building blocks to the new framework.  %
In \Cref{sec:sqisign_protocol_overview} we present \SQItorsion, a general template for
instantiating signature schemes from our generalized Deuring
correspondence, and analyze its security properties insofar as they
are independent of the choice of level structure and isogeny-to-ideal
algorithm.  %
In \Cref{sec:klpt} we show that the KLPT
algorithm can be generalized to find connecting ideals with
restrictions defined by level structures. %
This lets us instantiate protocols using exclusively isogenies of
elliptic curves (aka, one-dimensional isogenies), in the style of
pre-SIDH-attacks SQIsign. %
Although modern SQIsign has moved away from one-dimensional isogenies
for efficiency, these algorithms are still very much relevant for
SQIsign-like ring signatures~\cite{CiC:BorLaiLer24} and chameleon
hashes~\cite{forensic}. %
Finally, in \Cref{sec:hd} we analyze in detail a modern instantiation
of \SQItorsion, using higher dimensional isogenies, and compare it to
the latest version of the NIST candidate SQIsign.

\section{Level structures and the Deuring Correspondence}
\label{sec:level_all}
We start by studying how isogenies map torsion subgroups of elliptic
curve, then enlarge the perspective to isogenies mapping level
structures, and finally generalize the Deuring correspondence to them.

\subsection{From ideals to matrices via isogenies}
\label{sec:id-isog-mat}

Fix a positive integer $\level$, let $\Zmod := \Z/N\Z$, with group of
units ${(\Zmod)}^*$.  For $E$ an elliptic curve defined over a field
of characteristic not dividing $\level$, a basis of the $N$-torsion
group $E[N]$ is any ordered pair of points $\B = (P, Q)$ that generate
$E[N]$.  Then the ring $\Mattwo$ of $2\times 2$ matrices modulo $N$
acts on pairs of elements of $E[\level]$ as follows
\begin{equation}
	\label{eq:GL_2_action}
	\smt abcd \cdot \vv{P}{Q} = \vv{a P + bQ}{ cP + d Q} \ .
\end{equation}

If for every elliptic curve $E$ we make an arbitrary choice of a basis
of $E[N]$, we can associate to any isogeny a matrix as follows.

\begin{definition}\label{def:matrix_representation}
Given a morphism of elliptic curves $\varphi \colon E \to E'$  and bases $(P,Q)$ 
and $(P',Q')$ of $E[\level]$ and $E'[\level]$,
the matrix representation of $\varphi$ on the $\level$-torsion 
is the matrix  $\tomatrix{\varphi}{\level}\in \Mattwo$  such that\footnote{To simplify notation we omit the
	dependence on $\level$ and the bases $(P,Q)$ and $(P',Q')$.} 
\begin{equation}
    \label{eq:matrix_representationnn}
    \vv{\varphi(P)}{\varphi(Q)} = \tomatrix{\varphi}{\level} \cdot \vv{P'}{Q'} \ 
    .
\end{equation}
\end{definition}

The map $\varphi\mapsto\tomatrix{\varphi}{\level}$ defines a
homomorphism of abelian groups, and in the case of supersingular elliptic
curves it is surjective.

\begin{lemma}
    \label{lem:surjectivity_torsion}
    Let $\level$ be a positive integer, let 
    $E, E'$ be supersingular elliptic curves in characteristic $p\nmid N$.
    Then the natural map 
    \begin{equation} \label{eq:hom_reductionn}
         \Hom(E,E') \lto \Hom(E[\level],E'[\level]) 
    \end{equation}
    is surjective. In other words if we choose bases $(P,Q)$, $(P',Q')$ 
     of $E[\level]$, $E'[\level]$ 
    the map $\varphi \mapsto \tomatrix{\varphi}{\level}$ is surjective.
\end{lemma}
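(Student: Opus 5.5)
We plan to reduce to the case $E=E'$ and then use the local structure of the maximal order $\End(E)$ at primes dividing $N$.

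\emph{Step 1: reduction to endomorphisms.} Since $E,E'$ are supersingular over the same characteristic, there is an isogeny $\rho\colon E\to E'$, and by choosing among isogenies we may take one of degree coprime to $N$. For instance, by \Cref{th:deuring-corr} the ideal $I_\rho$ connecting the two orders is equivalent to an ideal of norm coprime to $N$ (a standard fact: every left ideal class of a maximal order contains integral ideals of norm avoiding any finite set of primes). Then $\rho$ induces an isomorphism $E[N]\to E'[N]$. Given any group homomorphism $f\colon E[N]\to E'[N]$, we write $f = \rho|_{E[N]}\circ g$ with $g = (\rho|_{E[N]})^{-1}\circ f\in\End(E[N])$. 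If $g$ is the restriction of some $\alpha\in\End(E)$, then $\rho\circ\alpha\in\Hom(E,E')$ restricts to $f$. So it suffices to show that $\End(E)\to\End(E[N])\simeq\Mattwo$ is surjective.

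\emph{Step 2: the endomorphism case.} The map factors as $\End(E)\to\End(E)/N\End(E)\to\End(E[N])$, and by the Chinese remainder theorem it suffices to treat $N=\ell^e$ for primes $\ell\ne p$. By Deuring, $\O=\End(E)$ is a maximal order in $\Bpinf$. Since $\ell\ne p$, $\Bpinf$ splits at $\ell$, and a maximal order of $\Bpinf\otimes\Q_\ell\simeq M_2(\Q_\ell)$ is conjugate to $M_2(\Z_\ell)$. Hence $\O/\ell^e\O\simeq M_2(\Z/\ell^e\Z)$, which has cardinality $\ell^{4e}$. The action map $\O/\ell^e\O\to\End(E[\ell^e])$ is well defined, and it is injective. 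Indeed, if $\alpha$ kills $E[\ell^e]$, then $\alpha$ factors through multiplication by $\ell^e$ (the separable isogeny $[\ell^e]$ has kernel $E[\ell^e]$), so $\alpha=\ell^e\beta$ with $\beta\in\End(E)$. Both sides have order $\ell^{4e}$, so the map is bijective.

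\emph{Main obstacle.} The delicate inputs are the local identification $\O\otimes\Z_\ell\simeq M_2(\Z_\ell)$ and, in Step 1, the existence of an isogeny of degree coprime to $N$. For the latter, an alternative avoids ideal classes: take any isogeny $\rho$ and handle the general $f$ via the target curve's endomorphisms as well. Since $\Hom(E,E')$ is a rank-one projective right $\End(E)$-module locally (locally principal at $\ell$, because both orders are maximal), we get $\Hom(E,E')\otimes\Z_\ell\simeq \rho_\ell\,(\O\otimes\Z_\ell)$ for some local generator $\rho_\ell$ of degree a unit. This again yields an isomorphism on $\ell^e$-torsion, and we conclude as above.
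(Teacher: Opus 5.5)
Your proof is correct, and its core is the paper's own argument. The paper also proves injectivity by noting that a morphism vanishing on the $N$-torsion factors through the separable map $[N]$ \cite[Chapter III, Corollary 4.11]{silverman2009arithmetic}, and then concludes by counting.

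The difference is that the paper runs this directly on $\Hom(E,E')$ rather than on $\End(E)$. The kernel of $\Hom(E,E')\to\Hom(E[N],E'[N])$ is $N\Hom(E,E')$. Since $\Hom(E,E')$ is free of rank $4$, both $\Hom(E,E')/N\Hom(E,E')$ and $\Hom(E[N],E'[N])\simeq M_2(\Z/N\Z)$ have $N^4$ elements, so the map is surjective.

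This makes your Step~1 and most of Step~2 unnecessary. You need no isogeny of degree prime to $N$, no reduction to prime powers, and no local identification $\O\otimes\Z_\ell\simeq M_2(\Z_\ell)$. The count $|\O/\ell^e\O|=\ell^{4e}$ already follows from $\O$ being free of rank $4$; maximality plays no role.

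The paper also runs the logic in the opposite direction to your Step~1: it derives the existence of morphisms of degree prime to any given integer in \Cref{cor:HomLarge} \emph{from} this lemma. Your route is not circular, since you justify that fact independently via ideal classes of maximal orders, but it imports more machinery than the statement needs. What your route buys is a slightly more structural conclusion: $\End(E)/N\End(E)\to\End(E[N])$ is a ring isomorphism onto $M_2(\Z/N\Z)$, which is not required here.
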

  
\begin{proof}
	The kernel of the map~\eqref{eq:hom_reductionn} is the set of morphisms 
	$\varphi \colon E \to E'$ such that $E[N] \subset \ker\varphi$, i.e. the maps $\varphi \in N  \Hom(E,E')$ by~\cite[Chapter III, 
	Corollary 4.11]{silverman2009arithmetic}. After 
	quotienting by the kernel, the  map~\eqref{eq:hom_reductionn} becomes an 
	injection
	\[
	\qquad \frac{\Hom(E,E')}{N \Hom(E,E')} \hookrightarrow \Hom(E[N],E'[N]) 
	\cong \Mattwo  \,.
	\]
	But for supersingular curves $\Hom(E,E')$ is a rank~$4$ lattice, hence all the above groups have order equal to $N^4$ and the map is surjective.
        The last part of the statement is simply an explicit choice for the map following from having chosen bases of $E[\level]$ and $E'[\level]$.
        \qed
\end{proof}

This homomorphism enjoys several other properties:
\begin{itemize}
\item It is contravariant with respect to composition: given
  $\varphi \colon E \to E'$ and $\psi \colon E' \to E''$ we have
  $\tomatrix{\psi \circ \varphi}{\level} = \tomatrix{\varphi}{\level}
  \cdot \tomatrix{\psi}{\level}$.
\item If the bases have the same Weil pairing $e(P,Q) = e(P',Q')$,
  then $\det\tomatrix{\varphi}{\level} = \deg\varphi \pmod N$; in
  particular this is the case if $\varphi$ is an endomorphism.
\item If $\dual\varphi$ is the dual of $\varphi$, then
  $\tomatrix{\dual{\varphi}}{\level} \cdot \tomatrix{\varphi}{\level}
  = \deg(\varphi) \cdot \Id$; in particular, if
  $\deg\varphi = \det\tomatrix{\varphi}{\level} \pmod N$, then
  $\tomatrix{\dual\varphi}{\level} = \tomatrix{\varphi}{\level}^\adjoint$, the adjoint of
  $\tomatrix{\varphi}{\level}$.
\end{itemize}

We would like to extend this mapping to the ideals defined in
\Cref{th:deuring-corr}, however we must be careful because ideals only
correspond to isogenies up to isomorphism, thus even after fixing
$(P,Q)$ and $(P',Q')$ the action of an ideal is only defined up to
automorphisms of $E'$. Keeping this ambiguity in mind, we associate to an integral ideal one among the possible corresponding isogenies. Having made this choice for one ideal is enough to fix the choice for all equivalent ideals. Indeed looking at \Cref{eq:deuring-corr}, if $I = I_\varphi$ is associated to $\varphi \colon E \to E'$ then we have maps 
\begin{equation}
	\begin{array}{ccccc}
	\Hom(E,E') &\overset{\xi_I}{\longleftrightarrow} & I = I_\varphi & \overset{\chi_I}\lto &\{\text{ideals equivalent to }I\}
	\\
\xi_I(\alpha) = \frac{1}{\deg \varphi}  \varphi \circ \dual {\alpha}  & \longleftrightarrow & \alpha & \longmapsto & \chi_I(\alpha) = \frac{I\conjugate{\alpha}}{\norm(I)} \ .
	\end{array}
	\label{eq:Xi}
\end{equation}
It is easy to check that $\xi_I(\alpha)$ is one of the (not so many) isogenies associated to $\chi_I(\alpha)$. 
Then, with a small abuse of notation, we extend the correspondence to matrices: given an integral ideal $J = \chi_I(\alpha)$ equivalent to $I$, we associate to it
\begin{equation}\label{eq:Chi_change_matrix}
\tomatrix{J}{\level} := 
\tomatrix{\xi_I(\alpha)}{\level} = 
\frac{\tomatrix{\alpha}{\level}^{\adjoint}
	\tomatrix{\varphi}{\level}}{\deg\varphi}
\ .%
\end{equation}
Notice moreover that the map $\chi_I$ is defined also when $I$ is any fractional ideal and $\alpha$ is any quaternion. 
We summarize this in \Cref{alg:compute_solutions}, which solves the
special case where we look for an equivalent ideal mapping $(P,Q)$ to
$(P',Q')$.

\begin{algorithm}
    \caption{$\ComputeSolutions$}
    \label{alg:compute_solutions}
    \begin{algorithmic}[1]
      \Require Supersingular curves $E,E'$ with bases $\langle P,Q\rangle = E[N]$ and $\langle P',Q'\rangle = E'[N]$;
      \Require A basis $\langle \alpha_1, \ldots, \alpha_4 \rangle$ of $\OrderEnd(E)$ and the matrices $\tomatrix{\alpha_1}{\level}, \ldots, \tomatrix{\alpha_4}{\level}$ wrt $(P,Q)$;
      \Require An ideal $\generalIdeal$ associated to an isogeny $\zeta:E\to E'$ of degree coprime to $N$;
      \Require The matrix $\tomatrix{\zeta}{\level}$ wrt $(P,Q)$ and $(P',Q')$;
      \Ensure $I_\rho\sim \generalIdeal$ such that $M_{I_\rho} = \Id$.
      \State Let $\langle \beta_1, \ldots, \beta_4\rangle$ be a basis of $I_\zeta$,
      compute $\tomatrix{\beta_1}{\level}, \ldots, \tomatrix{\beta_4}{\level}$ by linearity;
      \State
      Find by linear algebra $n_1,\ldots n_4 \in \Zmod$ s.t.
      $n_1\tomatrix{\beta_1}{\level} + \cdots + n_4
      \tomatrix{\beta_4}{\level}
      = \frac{\norm(\generalIdeal)}{\det M_\generalIsogeny}
      \tomatrix{\generalIsogeny}{\level}$;
        \label{line:linear_system}
        \State Set $\alpha = n_1 \beta_1 + \cdots + n_4 \beta_4$; 
        \label{line:i0_def}
        \State\Return  $I_\rho \gets  \chi_{\generalIdeal}(\alpha)$. \label{line:ideal} 
    \end{algorithmic}
\end{algorithm}

\begin{remark}
  \label{rmk:basis_action}
  \Cref{alg:compute_solutions} requires as input a basis of
  $\langle\alpha_1,\dots,\alpha_4\rangle$ for which we know the
  corresponding $\tomatrix{\alpha_i}{\level}$. %
  If $\level$ is smooth, then the $\alpha_i$ can be arbitrary, as long
  as they can be evaluated: the matrices are then computed through a
  generalized discrete logarithm.
  
  When $\level$ contains a large prime factor, instead, we 
  construct the basis along with $(P,Q)$.
  For special orders, such as $\OrderEnd(E_0)$,
  we look for two endomorphisms $\alpha, \beta$ such 
  that
  \begin{enumerate}
  \item $1,\alpha,\beta, \beta \alpha$ generate 
    $\End(E_0)/\level\End(E_0)$ as a $\Zmod$-module,
  \item we know $P \in E_0[\level]$, an eigenvector of $\alpha$ with
    non-zero eigenvalue.
  \item $\langle P, \beta(P) \rangle$ form a basis of $E_0[N]$.
  \end{enumerate}
  Then the matrices
  $\tomatrix{\alpha}{\level}, \tomatrix{\beta}{\level},
  \tomatrix{\beta\alpha}{\level}$ on the basis $(P, \beta(P))$ follow by direct computation.  
  When $E_0 : y^2 = x^3 + x$, we can take
  $\beta = \iota = \sqrt{-1}$ and $\alpha = \pi + n \iota$, with $\pi$
  the Frobenius endomorphism and $n$ integer such that $(-p - n^2)$ is
  a quadratic residue modulo $\level$, so that
  $\sqrt{-p - n^2} \pmod{N}$ is an eigenvalue of $\alpha$.

  For other orders, the information can be transported via an isogeny
  from $E_0$, as detailed in \Cref{rmk:computation_torsion_matrices}.
\end{remark}

\subsection{Level Structures} \label{sec:level_structures}
\label{sec:homsets_and_lvl_structures_love_story}

Restricting the action of \Cref{eq:GL_2_action} to the group $\GLtwo$ of invertible matrices, we get an action of $\GLtwo$ on the set of bases of $E[\level]$.
Given a subgroup $\Gamma$  of $\GLtwo$  a \textit{$\Gamma$-level 
structure}, or simply $\Gamma$-structure on an elliptic curve $E$ is a basis for 
$E[\level]$ up to change of basis by matrices in $\Gamma$: given a basis $(P,Q)$ of 
$E[\level]$, the $\Gamma$-level structure containing it is 
\begin{equation}    
    \label{eq:level_structure_orbit}
    \Gamma \cdot \vv{P}{Q} = \left\{ \Mat \cdot \vv{P}{Q}  \mid  \Mat \in \Gamma \right\} \ .
\end{equation}
The integer $\level$ is referred to as the \emph{level}.  

For example, the bases of the $E[\level]$ are $\{ \Id \}$-structures,
we call these \emph{full-level structures}. In this work we focus on
the following level structures:
\begin{itemize}
    \item \textit{Borel level structures} are the level structures relative to the Borel subgroup
    \begin{equation}
    	\Gamma_0 = \left\{ 
            \smt ab0d
    	\big| a\cdot d \in {(\Zmod)}^*
    	, b \in \Zmod \right\} \ .
    \end{equation}
    
   Borel structures of level $\level$ on $E$ are in bijection with cyclic subgroups 
   $C \subset E$ of order $N$: a Borel structure $\Gamma_0  \cdot \vv{P}{Q}$ corresponds to $C = \langle Q \rangle$ since 
    $\Gamma_0  \cdot \vv{P}{Q}$ is exactly the set of bases $(P',Q')$ 
   with $Q' \in C$.
\item \textit{Split Cartan level structures}, \ie diagonal level structures, relative to
\[
\Gamma = \left\{ \smt \lambda 0 0  \mu  \mid \lambda,\mu \in {(\Zmod)}^* \right\} \ .
\]
They correspond to ordered pairs of cyclic subgroups whose direct sum is $E[\level]$;
indeed $\Gamma  \cdot \vv{P}{Q}$ is the set of bases $(P',Q')$ 
    with $Q' \in \langle Q \rangle$ and $P'\in \langle P \rangle$.
\item \textit{Scalar level structures}, \ie $Z$-level structures,
    with 
    $$
    	Z = \left\{ \smt \lambda00\lambda \mid \lambda \in {(\Zmod)}^* \right\} \ .
    $$
\end{itemize}

If $\alpha\colon E \to E'$ is an isogeny of degree coprime to $\level$
and $S = \Gamma \cdot \vv PQ$ is a $\Gamma$-structure, we define
\begin{equation}\label{eq:level_structures_and_isogenies} \alpha(S) =  \Gamma
\cdot \vv{\alpha(P)}{\alpha(Q)} \;, \end{equation} which is a well defined
$\Gamma$-structure, in particular the above definition does not depend on the
choice of $\vv PQ \in S$.

The assumption $\gcd(N,\deg\alpha)=1$ is necessary and sufficient to
ensure that $({\alpha(P)},{\alpha(Q)})$ is a basis of $E'[\level]$,
however it will be convenient to extend definitions to isogenies of
degree not coprime to $N$.  We let $\spanned{\Gamma} \subset \Mattwo$
be the set of all $\Zmod$-linear combinations of elements in $\Gamma$,
and we define the set of all linear combinations of pairs of points in
$S = \Gamma \cdot \vv PQ$ as
\begin{equation}
    \label{eq:span_S}
    \spanned{S} =  \spanned{\Gamma}  \cdot \vv PQ  =  \{ \gamma \cdot \vv{P}{Q}  \mid \gamma \in \spanned{\Gamma}
    \} 
     \subset 
    E[\level] \times E[\level] \ ,
\end{equation}
which, again, does not depend on the choice of $\vv PQ \in S$.

\begin{definition}%
    \label{def:supersingular_curve_level_structure_set_2}
    \label{def:hom_level}
    \label{def:order_level_structure}
    Let $\level$ be a positive integer and $\Gamma$ be a subgroup of $\GLtwo$.
    \begin{enumerate}
        \item A (supersingular) elliptic curve with $\Gamma$-level structure is 
        a pair $\curvelevel = (E,S)$ where $E$ is a (supersingular) elliptic curve defined 
        in characteristic not dividing $\level$ and $S$ is a $\Gamma$-level 
        structure on $E$.

        \item Given two elliptic curves with $\Gamma$-level structures $(E,S)$
        and $(E',S')$, the set of homomorphisms between them is  
        \begin{equation}
            \label{eq:Hom_level_structure}
            \Hom((E,S), (E', S')) = \{ \alpha \colon E \to E' : \alpha(S) 
            \subseteq \spanned{S'} \} \ .
        \end{equation}
         We use the notation $\alpha \colon (E,S) \to (E', S')$ for such homomorphisms. Moreover $\alpha$ is an isomorphism between 
        $(E,S)$ and $(E', S')$ if it has degree 1.
        \item The endomorphism ring of an elliptic curve with $\Gamma$-level structure $(E,S)$ is 
            \begin{equation*}
                \OrderEnd(E,S) = \End(E,S) := \Hom((E,S),(E,S)) =  \{
                    \alpha \in \End(E) \mid \alpha(S) \subseteq \spanned{S}
                \} \,.
            \end{equation*}
    \end{enumerate}
\end{definition}

Connecting level structures to the matrix representation of
\Cref{def:matrix_representation}, $\varphi \in \Hom((E,S), (E', S'))$
if and only if $\tomatrix{\varphi}{\level} \in \spanned{\Gamma}$ for
an arbitrary choice of representatives of $S,S'$, and
$\varphi(S) = S'$ if and only if
$\tomatrix{\varphi}{\level} \in \Gamma$.

\begin{remark}
  \label{rmk:HomAndSpanGamma}
  From the characterization in terms of representation matrices, we
  notice that $\Hom((E,S), (E', S'))$ only depends on
  $\spanned{\Gamma}$, in particular $\Gamma$ and
  $\spanned{\Gamma} \cap \GLtwo$ give the same $\Hom$. This justifies
  the following definition.

  \begin{definition}
    \label{def:add-stable}
    $\Gamma \subset \GLtwo$ is \emph{additively stable} if
    $\Gamma = \GLtwo \cap \spanned{\Gamma}$.
  \end{definition}

  The scalar, Borel and Cartan subgroups are all additively stable, but
  the trivial group or $\Gamma_1$ (see~\cite[Section
  5.4]{EC:DeFFouPan24} for a definition) are not. %
  Moreover if $\Gamma$ is additively stable, an isogeny
  $\varphi \in \Hom((E,S), (E', S'))$ has degree coprime to $N$ if and
  only if it satisfies $\varphi(S)= S'$, which is stronger than
  $\varphi(S)\subset \spanned{S'}$.
\end{remark}

\Cref{lem:surjectivity_torsion} implies that, given $\bbE = (E',S')$,
$\bbE' = (E',S')$ supersingular elliptic curves with
$\Gamma$-structure, $\Hom(\bbE, \bbE')$ is not empty.  It is, in fact,
a sublattice inheriting the usual scalar product, norm and length from
$\Hom(E,E')$, namely
\begin{equation}\label{eq:scalar_product_norm_length}
	\langle \alpha,\beta \rangle = \tfrac 12 \tr(\hat \alpha\circ \beta) \,, \quad 
	\Norm(\alpha) = \deg(\alpha)\,, \quad |\alpha| = \sqrt{\deg\alpha} \;. 
\end{equation}

\begin{corollary}
    \label{cor:HomLarge}
    \label{lem:level_structure_isogeny_dual}
            \label{cor:end_sub_order}
    Let $p$ be a prime, $\level$ be a positive integer not divisible by $p$ and 
    $\Gamma$ a subgroup of $\GLtwo$.
    Let $c_\Gamma = [M_2(\Zmod) : \spanned{\Gamma}]$.
    Let $\curvelevel$ and $\curvelevel'$ be supersingular elliptic curves with 
    $\Gamma$-structure in characteristic $p$.
    \begin{enumerate}
        \item \label{item:Hom_index}
         $\Hom(\curvelevel, \curvelevel') $ is a
            (non-empty) $4$-dimensional sublattice of $\Hom(E,E')$ of index 
            equal to $ c_\Gamma $ and covolume $ \frac{p}{4} \cdot 
            c_\Gamma  $
        \item If $\varphi \in \Hom(\curvelevel,\curvelevel')$ then $\hat{\varphi} \in 
            \Hom(\curvelevel',\curvelevel)$. Also, if $\Gamma$ contains the group of scalar 
            matrices, $\varphi(S) = S'$ if and only if $\hat \varphi(S') = S$.
            \label{item:dual_in_hom}
        \item 
			For each positive 
			integer $M$, the lattice $\Hom(\curvelevel, \curvelevel') $ contains an element of degree prime to $M$.
			\label{item:Hom_coprime}
        \item The set $\OrderEnd(\curvelevel)$ is a subring 
            of $\End(E)$ of finite index $c_\Gamma$ closed under dual; in particular it is an order.
            \label{item:end_order}
        \item There is always an isogeny 
            $\varphi \in \Hom(\curvelevel, \curvelevel')$ of degree less than or equal to
            $2\sqrt{2 p c_\Gamma }/\pi$.
            \label{item:small_isogeny}
    \end{enumerate}
\end{corollary}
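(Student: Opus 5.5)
The plan is to reduce each item to the surjectivity statement of \Cref{lem:surjectivity_torsion}, working through the matrix representation. Fix representatives $(P,Q)\in S$ and $(P',Q')\in S'$. As noted after \Cref{def:hom_level}, $\varphi\in\Hom(\curvelevel,\curvelevel')$ if and only if $\tomatrix{\varphi}{\level}\in\spanned{\Gamma}$. So $\Hom(\curvelevel,\curvelevel')$ is the preimage of the subgroup $\spanned{\Gamma}\subset\Mattwo$ under the surjective homomorphism $\Hom(E,E')\to\Mattwo$.

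For item~\ref{item:Hom_index}, surjectivity shows that this preimage has index $[\Mattwo:\spanned{\Gamma}]=c_\Gamma$ in $\Hom(E,E')$. It also contains $N\Hom(E,E')$, so it is a full-rank sublattice and in particular non-empty. The covolume of $\Hom(E,E')$ with respect to the form $\langle\alpha,\beta\rangle=\frac12\tr(\hat\alpha\beta)$ is the standard $p/4$ (a maximal order has reduced discriminant $p$, and $\Hom(E,E')$, being an invertible module, has the same covolume). Multiplying by the index gives $\frac p4 c_\Gamma$.

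For item~\ref{item:dual_in_hom}, it is cleaner not to fix the Weil pairings, since $S$ and $S'$ are only defined up to $\Gamma$. We use $\tomatrix{\hat\varphi}{\level}\tomatrix{\varphi}{\level}=\deg\varphi\cdot\Id$. Suppose first that $\deg\varphi$ is coprime to $N$. Then $\tomatrix{\hat\varphi}{\level}=\deg\varphi\cdot\tomatrix{\varphi}{\level}^{-1}$. If $\tomatrix{\varphi}{\level}\in\spanned{\Gamma}\cap\GLtwo$, its inverse lies in $\spanned{\Gamma}$, because $\spanned{\Gamma}$ is a finite subring and invertible elements of a finite ring have inverses given by powers. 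Hence $\hat\varphi\in\Hom(\curvelevel',\curvelevel)$. The general case, with $\deg\varphi$ not coprime to $N$, is the step I expect to be the main obstacle. I would handle it with the adjugate: $\spanned{\Gamma}$ is a subalgebra containing $\Id$, and the adjugate of $M$ equals $\tr(M)\Id-M$, so $\spanned{\Gamma}$ is closed under adjugates. The remaining difficulty is to relate $\tomatrix{\hat\varphi}{\level}$ to $\tomatrix{\varphi}{\level}^\adjoint$ when the Weil pairings of the chosen bases differ. I would compare through a change of basis by $\mathrm{diag}(1,u)$ and check that this is harmless, or, failing that, argue by density: write $\varphi$ as a limit modulo $N$ of elements of $\Hom(\curvelevel,\curvelevel')$ of degree coprime to $N$, using item~\ref{item:Hom_coprime}, and use linearity of $\varphi\mapsto\hat\varphi$. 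For the statement about $\varphi(S)=S'$ when $\Gamma$ contains the scalars, we have $\tomatrix{\hat\varphi}{\level}=\deg\varphi\cdot\tomatrix{\varphi}{\level}^{-1}$. This lies in $\Gamma$ exactly when $\tomatrix{\varphi}{\level}\in\Gamma$, since $\deg\varphi\cdot\Id\in Z\subset\Gamma$.

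For item~\ref{item:Hom_coprime}, the lattice contains $N\Hom(E,E')$ together with any lift $\varphi_0$ of $\Id\in\spanned{\Gamma}$. For primes $\ell\mid M$ with $\ell\nmid N$, the lattice contains $N\Hom(E,E')$, which already has elements of degree prime to $\ell$ (at most $\ell$-divisible degrees would force every element modulo $\ell$ to be isotropic, which is impossible for rank $4$). For $\ell\mid N$, the lift $\varphi_0$ has $\det\tomatrix{\varphi_0}{\level}$ a unit, so its degree is prime to $\ell$. The elements with degree divisible by a given $\ell$ form the zero set of a nonzero quadratic form modulo $\ell$ on the lattice modulo $\ell$. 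By the Chinese remainder theorem, and avoidance of finitely many such proper quadric zero sets (with a small-prime check for $\ell=2,3$), we get a common element whose degree is prime to $M$.

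Item~\ref{item:end_order} is item~\ref{item:Hom_index} with $E=E'$ and $S=S'$. Closure under composition holds because $\spanned{\Gamma}$ is a ring and the representation map is contravariant. It contains $1$, and closure under duals is item~\ref{item:dual_in_hom}, so it is a full-rank subring, hence an order. Item~\ref{item:small_isogeny} is Minkowski's convex body theorem in dimension $4$. The ball of radius $r$ has volume $\frac{\pi^2}{2}r^4$, and requiring this to be at least $2^4\cdot\frac p4 c_\Gamma$ yields a nonzero vector with $\deg=r^2\le 2\sqrt{2pc_\Gamma}/\pi$.
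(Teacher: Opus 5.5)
Your route is the paper's: $\Hom(\curvelevel,\curvelevel')$ is the kernel of the surjection $\Hom(E,E')\to\Mattwo/\spanned{\Gamma}$, its covolume is $\frac p4$ times the index, items (ii)--(iv) come from surjectivity, and (v) is Minkowski. Your (i), (iv), (v) and the case of (ii) with $\deg\varphi$ prime to $N$ are correct expansions of that one-line proof, and your Minkowski constant matches.

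\textbf{The open step in (ii).} The one step you leave open is (ii) when $\deg\varphi$ is not prime to $N$. It does not need a change of basis. Write $e(P',Q')=e(P,Q)^u$ with $u\in(\Zmod)^*$. Plugging the four pairs of basis points into $e(\varphi(X),Y')=e(X,\hat\varphi(Y'))$ gives
\[
\tomatrix{\hat\varphi}{\level}=u\,\tomatrix{\varphi}{\level}^{\adjoint}
\]
for every $\varphi$, invertible or not. Your observation that $\spanned{\Gamma}$ is stable under $M\mapsto\tr(M)\Id-M$ then finishes the proof.

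Your density fallback would fail as phrased. If $\tomatrix{\varphi}{\level}$ is not invertible, every $\psi\equiv\varphi \bmod N\Hom(E,E')$ has the same matrix, hence a degree not prime to $N$. So $\varphi$ cannot be approximated by elements of degree prime to $N$. What does work is additivity of the dual. Since $\Gamma$ spans $\spanned{\Gamma}$, surjectivity lets you write $\varphi=\sum_i n_i\psi_i+N\chi$ with $\tomatrix{\psi_i}{\level}\in\Gamma$ and $\chi\in\Hom(E,E')$. Then dualize term by term: each $\hat\psi_i$ is handled by the coprime case, and $N\hat\chi\in N\Hom(E',E)\subset\Hom(\curvelevel',\curvelevel)$.

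\textbf{A minor point in (iii).} The claim that $\Hom(E,E')$ has elements of degree prime to $\ell$ is true, but ``rank $4$'' is not the reason, since a quadratic form can vanish identically mod $\ell$ in any rank. For $\ell\neq p$, use that $(x,y)\mapsto\tr(\hat x y)$ has Gram determinant $p^2$, so the degree form cannot vanish identically mod $\ell$. For $\ell=p$, which may divide $M$, use an isogeny $E\to E'$ whose degree is a power of a prime other than $p$. Also, no quadric avoidance or small-prime check is needed. The value $\deg \bmod \ell$ depends only on the class modulo $\ell\Hom(\curvelevel,\curvelevel')$, so one good class for each prime $\ell\mid M$ plus the Chinese remainder theorem already gives the element.
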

\begin{proof}
  The lattice $\Hom(\bbE, \bbE')$ is the kernel of the surjective map
  $\Hom(E,E') \to \Mattwo/\spanned{\Gamma}$, thus it is a sublattice
  of $\Hom(E,E')$ of finite index $c_\Gamma$.
  Statement~\ref{item:Hom_index} follows from $\Hom(E,E')$ having
  covolume $\tfrac p4$.  Statements~\ref{item:dual_in_hom}
  and~\ref{item:Hom_coprime} follow from
  \Cref{lem:surjectivity_torsion} and \Cref{rmk:HomAndSpanGamma}.
  Statement~\ref{item:end_order} is then immediate.  The last
  statement follows from Minkowski's theorem.
  \qed
\end{proof}

\subsection{Generalising the Deuring correspondence}
\label{sec:order_and_lvl_structures_love_story}

We now extend the Deuring Correspondence to the context of
supersingular elliptic curves with level structures.
This also generalizes work of Sarah Arpin done 
in~\cite{arpin-sarah-borel-lvl-structures} for Borel level structures.

Given a supersingular elliptic curve with $\Gamma$-structure $\curvelevel$ the natural 
analogue of the  maximal order $\calO = \End(E)$ is the subring 
$\OrderEnd(\curvelevel) 
= \End (\curvelevel) $ in \Cref{def:hom_level}, 
that is again an order in $\Bpinf$ by 
\Cref{cor:end_sub_order}. 

\begin{remark}
	\label{rmk:order_not_unique}
	In general, when considering non-trivial level structures, there is not a 
	bijection between supersingular elliptic curves with $\Gamma$-level 
	structure and orders in $\Bpinf$. In fact, if there exists a matrix $\Mat\notin \Gamma$ in the normalizer of 
	$\Gamma$ in $\GLtwo$, i.e.
	$\Mat^{-1}\Gamma \Mat = \Gamma$, then it is immediate to check that 
	$\OrderEnd(E,S) = \OrderEnd(E, \Mat S)$ 
 	 even if usually\footnote{e.g. if $j(E)\neq 0,1728$ and $M \neq -\mathrm{Id}$} $(E,S)$ and $(E, \Mat S)$ 
	are not isomorphic.
\end{remark}

The relevant ideals, in the context of supersingular elliptic curves with level 
structure are the following.
\begin{definition}	\label{def:ideal_level_structure}
    Let $\bbE, \bbE'$ be as in  \Cref{lem:level_structure_isogeny_dual}. 
    Given  $0 \neq \rho \in \Hom(\bbE,\bbE')$, let 
    \begin{equation}
      \label{eq:PGLideals}
        I_{\rho} := I_{\rho,\bbE'} := \{ \psi \circ \rho : \psi \in \Hom(\curvelevel', 
        \curvelevel) \}  \subset \OrderEnd(\curvelevel) \ .
    \end{equation}
\end{definition}

Since $\Hom(\curvelevel',\curvelevel)$ is a left $\OrderEnd(\curvelevel)$-module by postcomposition, 
then $I_{\rho,\curvelevel'}$ is a left ideal of $\OrderEnd(\curvelevel)$. 
Not all $\OrderEnd(\bbE)$-ideals can be obtained as in \Cref{def:ideal_level_structure}, only locally principal 
ones, i.e. ideals $I \subset \OrderEnd(\bbE)$ such that, for all primes $\ell \in \Z$ the ideal $I\otimes \Z_\ell$ of $\OrderEnd(\bbE) \otimes \Z_\ell$ is generated by one element (a heuristic explanation is that $\OrderEnd(\bbE) \otimes \Z_\ell$ and $\OrderEnd(\bbE') \otimes \Z_\ell$ are isomorphic, and we can interpret $\rho$ as a local generator, for the rigorous proof see \Cref{thm:Deuring_with_level_structure}). 
For example if $\Gamma$ is the group of scalar matrices, then $N \End(E)$ is non-principal as $\OrderEnd(\bbE)$-ideal for $N>1$, and it is not of the form \eqref{eq:PGLideals}.

If we change $\rho$ with $\rho'\colon \curvelevel \to \curvelevel'$, the 
ideal changes to an equivalent one: 
\begin{equation}\label{eq:equivalent_ideals}
	I_{\rho',\curvelevel'}  = I_{\rho,\curvelevel'} \cdot \tfrac{\hat\rho \circ 
		\rho'}{\deg\rho} \,.
\end{equation}  
The converse is not true in general: there may be equivalent ideals
corresponding to different codomains $\bbE'$ (see
\Cref{rmk:HomAndSpanGamma}), unless
$\Gamma$ is additively stable, in which case we write
$[I_{\curvelevel'}]$ for the ideal class of $I_{\rho,\curvelevel'}$.

Up to these two observations, the Deuring correspondence generalises, as summarized in \Cref{tab:deuring}. 

\begin{theorem}
	\label{thm:Deuring_with_level_structure}
	Let $p,\level,\Gamma, \bbE, \bbE'$ be as in \Cref{lem:level_structure_isogeny_dual}.
		For each $\rho\colon \curvelevel \to \curvelevel'$ the ideal $I_{\rho,\curvelevel'}$ is 
		locally principal, with norm  
		\[
		\Norm(I_{\rho,\curvelevel'}) :=
		\gcd \left( \{ \deg(\alpha) \mid \alpha \in 
		I_{\rho,\curvelevel'}\} \right)
		= \sqrt {[\OrderEnd(\curvelevel) : 
			I_{\rho,\curvelevel'}]}   = \deg(\rho)  \ .
		\]
Moreover, 
if we denote by $\overline{\cdot}$ the conjugation in $\Bpinf$ and we identify $\OrderEnd(\bbE')$ with
$$\tfrac1{\deg \rho}\dual\rho \circ
\, \OrderEnd(\bbE')  \circ\rho \quad\subset\quad \OrderEnd(\bbE) \otimes \Q \quad\cong\quad \Bpinf \ ,$$
then $I_{\dual\rho, \bbE} = \overline{\ol{I_{\rho,\bbE}}}$.
Under the same identification, if $\tau \colon \bbE' \to \bbE''$ is another non-zero morphism, then 
$I_{\tau \circ \rho, \bbE''} = I_{\rho, \bbE'} \cdot I_{\tau, \bbE''}$.
\end{theorem}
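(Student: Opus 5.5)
The plan is to reduce everything to local computations at each prime $\ell$, splitting into $\ell\nmid N$ and $\ell\mid N$. Throughout, write $\O=\OrderEnd(\bbE)$, $\O'=\OrderEnd(\bbE')$, and $H=\Hom(\bbE',\bbE)$, so that $I:=I_{\rho,\bbE'}=H\circ\rho$.

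\emph{Local principality.} For $\ell\nmid N$ the level condition is vacuous after tensoring with $\Z_\ell$. Hence $H\otimes\Z_\ell=\Hom(E',E)\otimes\Z_\ell$ and $\O\otimes\Z_\ell=\End(E)\otimes\Z_\ell$. Here the classical Deuring theory (\Cref{th:deuring-corr}) gives local principality: $\Hom(E',E)\otimes\Z_\ell$ is free of rank one over $\End(E)\otimes\Z_\ell$.

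For $\ell\mid N$, write $\ell^e\parallel N$. Using \Cref{lem:surjectivity_torsion} and \Cref{cor:HomLarge}\ref{item:Hom_coprime}, we can pick $\psi_0\in H$ of degree prime to $\ell$. Then $\psi_0$ induces an isomorphism on $\ell^e$-torsion, and its matrix lies in $\spanned{\Gamma}$ and is invertible mod $\ell^e$. I claim $H\otimes\Z_\ell=\O\psi_0\otimes\Z_\ell$. For $\psi\in H$, the quaternion $\psi\psi_0^{-1}=\psi\hat\psi_0/\deg\psi_0$ is $\ell$-integral. Its matrix mod $\ell^e$ is a product of an element of $\spanned{\Gamma}$ and the inverse of an invertible element of $\spanned{\Gamma}$. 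This product lies in $\spanned{\Gamma}$ (mod $\ell^e$) because $\spanned{\Gamma}$ is a ring, and invertible elements of a finite ring have inverses in the ring. So $\psi\psi_0^{-1}\in\O\otimes\Z_\ell$. It follows that $I\otimes\Z_\ell=(\O\otimes\Z_\ell)\,\psi_0\rho$ is principal.

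I expect this step to be the main subtlety. The CRT splitting of $N$ must be handled carefully, since $\spanned{\Gamma}$ need not be a product of its $\ell$-parts. What I would try first is to use that $\spanned{\Gamma}$ is a $\Zmod$-algebra, hence decomposes along the idempotents of $\Zmod$. This gives membership in $\O$ prime by prime, because $\O$ is the intersection of its localizations.

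\emph{Norm.} With local generators $\gamma_\ell$ we get $[\O:I]=\prod_\ell \norm(\gamma_\ell)^2$ up to units, by the standard fact that right multiplication by $\gamma$ on a rank-4 lattice has determinant $\norm(\gamma)^2$. Locally $\gamma_\ell=\psi_0\rho$ with $\deg\psi_0$ a unit at $\ell$, so the $\ell$-part of $[\O:I]$ is $\deg(\rho)^2$. The gcd of reduced norms of a locally principal ideal equals the product of the local generator norms, which is again $\deg\rho$. Alternatively, the gcd equality follows directly: every element has degree divisible by $\deg\rho$, and $\psi_0\rho$ has degree $\deg\rho$ times a number prime to any chosen $\ell$.

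\emph{Conjugation and products.} Under the identification $\O'\leftrightarrow\frac1{\deg\rho}\hat\rho\O'\rho$, an element $\hat\rho\circ\phi$ of $I_{\hat\rho,\bbE}$, with $\phi\in\Hom(\bbE,\bbE')$, maps to $\frac1{\deg\rho}\hat\rho(\hat\rho\phi)\rho$. Compare this with the conjugate of $\hat\phi\rho\in I_{\rho}$, which is $\hat\rho\phi$, transported similarly; the double bar in the statement should be read as this conjugation combined with the transport. The map $\phi\mapsto\hat\phi$ is a bijection $\Hom(\bbE,\bbE')\to\Hom(\bbE',\bbE)$ by \Cref{cor:HomLarge}\ref{item:dual_in_hom}, so the sets match.

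For products, the inclusion $I_\rho I_\tau\subseteq I_{\tau\rho}$ is immediate from composition. Equality then follows from the norms: both sides are locally principal with norm $\deg\rho\deg\tau$, since the norm is multiplicative for products of locally principal ideals. Equivalently, at each $\ell$ both sides are generated by $\psi_0'\tau\rho$ for suitable $\psi_0'$ of degree prime to $\ell$.
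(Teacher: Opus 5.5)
Your proof is correct, and its overall structure matches the paper's: local principality first, then norm and index by local computation, conjugation by duality, and the product formula by inclusion plus an index/generator comparison. Where you genuinely differ is local principality at primes dividing $N$.

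\emph{The paper's route.} It first replaces $\rho$ by another element of $\Hom(\bbE,\bbE')$ of degree prime to $N$, using \eqref{eq:equivalent_ideals} and \Cref{cor:HomLarge}. At primes $\ell \mid \deg\rho$, which then do not divide $N$, it invokes the classical correspondence. At every other prime it exhibits an element $\hat\alpha\rho \in I_{\rho,\bbE'}$ of degree prime to $\ell$, i.e.\ an $\ell$-adic unit, so the localized ideal is the whole local order. The index is also computed only after this change of representative.

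\emph{Your route.} You show that $\Hom(\bbE',\bbE)$ is itself locally free of rank one over $\OrderEnd(\bbE)$, generated at $\ell$ by any $\psi_0$ of degree prime to $\ell$. This gives explicit local generators $\psi_0\rho$ for the given $\rho$ at every prime.

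\emph{What each buys.} The paper's argument is shorter, since it only needs the existence of an $\ell$-unit. Yours avoids changing representatives and makes the index and norm computation immediate. It also makes the product formula rigorous: take $\psi_0'=\psi_0\circ\chi_0$ with $\chi_0\colon\bbE''\to\bbE'$ and $\psi_0\colon\bbE'\to\bbE$ both of degree prime to $\ell$. This sidesteps the compatibility of right and left orders that the paper's ``index is multiplicative'' step leaves implicit.

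Two smaller remarks.

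\emph{The step at $\ell\mid N$ can be simplified.} Your finite-ring inversion and CRT discussion are unnecessary. By \Cref{cor:HomLarge}, item~\ref{item:dual_in_hom}, $\hat\psi_0\in\Hom(\bbE,\bbE')$. Since $\spanned{\Gamma}$ is closed under products, $\psi\circ\hat\psi_0$ already lies in $\OrderEnd(\bbE)$. As $1/\deg\psi_0\in\Z_\ell$, it follows directly that $\psi\psi_0^{-1}\in\OrderEnd(\bbE)\otimes\Z_\ell$. The same line works for every prime $\ell$, so the split into $\ell\mid N$ and $\ell\nmid N$ is not needed either.

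\emph{The conjugation paragraph has the composition order reversed.} Elements of $I_{\hat\rho,\bbE}$ are $\phi\circ\hat\rho$ with $\phi\in\Hom(\bbE,\bbE')$, not $\hat\rho\circ\phi$. Under the identification they become $\tfrac{1}{\deg\rho}\hat\rho\,\phi\,\hat\rho\,\rho=\hat\rho\phi=\overline{\hat\phi\rho}$. The duality bijection then gives exactly $\overline{I_{\rho,\bbE'}}$, and the conjugate needs no further transport.
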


\begin{proof}  
	Local principality is invariant if we replace $I_{\rho, \bbE'}$ with an 
	equivalent ideal. In particular, using \eqref{eq:equivalent_ideals} we can 
	change $\rho$ to another element of $\Hom(\bbE,\bbE')$ and, using 
	\Cref{cor:HomLarge}, suppose that $\deg(\rho)$ is coprime to $N$. In 
	particular if $\ell$ divides $\deg \rho$, then it does not divide the index 
	$[\End(E): \OrderEnd(\bbE)]$, nor $[\Hom(E,E'):\Hom(\bbE,\bbE')]$, hence 
	$\OrderEnd(\bbE) \otimes \Z_\ell$ is equal to the maximal order 
	$\End(E)\otimes \Z_\ell$ and  
	\begin{equation}
		\label{eq:ideal_local_classical}
		I_{\rho,\bbE'} \otimes \Z_\ell =  \left(\widehat{\Hom(\bbE,\bbE') } 
		\circ \rho \right) \otimes \Z_\ell
		= \left(\widehat{\Hom(E,E') } \circ \rho \right) \otimes \Z_\ell
	\end{equation}
	which is principal since it is the local version of the ideal  
	$\widehat{\Hom(E,E') } \circ \rho$ given by the classical Deuring 
	correspondence. For the primes $\ell$ that do not divide $\deg \rho$, by 
	the last part of \Cref{cor:HomLarge}
	we can choose  an element $\alpha$ of $\Hom(\bbE,\bbE')$ of degree coprime 
	to $\ell$, so that $\hat \alpha \rho \in I_{\rho,\bbE'}$ also has degree 
	prime to $\ell$, implying that $I_{\rho,\bbE'} \otimes \Z_\ell = 
	\OrderEnd(\bbE)\otimes\Z_\ell$, which is principal. 
	
	By definition $\Norm(I_{\rho. \bbE'})$ is the $\gcd$ of  
	$\deg(\rho)\deg(\alpha)$ for all $\alpha$ in 
	$\Hom(\bbE,\bbE')$, which is equal to $ \deg(\rho)$ by
	 \Cref{cor:HomLarge}, \Cref{item:Hom_coprime}.
	
	To compute the index of $I_{\rho,\bbE'}$ we can change the ideal to an 
	equivalent one using \eqref{eq:equivalent_ideals} and suppose that $\deg \rho$ 
	is coprime to $N$ (to see how the index changes, recall that for $\alpha \in \OrderEnd(\bbE)$, multiplication by $\alpha$ has 
	determinant $\deg(\alpha)^2$, hence $\alpha I$ 
	has index $\deg(\alpha)^2$ inside $I$). Then by the same arguments used to prove local 
	principality we see that all the primes $\ell$ not dividing $\deg \rho$ do not 
	divide the index $[\OrderEnd(\bbE):I_{\rho,\bbE'}]$, while the primes $\ell$ dividing $\deg \rho$ divide the index with the right multiplicity, by
	\eqref{eq:ideal_local_classical} and the classical case.

    The characterization of $I_{\dual\rho}$ is a direct application of 
    \Cref{def:ideal_level_structure}, using that taking the dual in $\End(E)$ 
    corresponds to conjugation in $\Bpinf \cong \End(\bbE) \otimes \Q$. For the 
    last statement, the  inclusion $I_{\rho, \bbE'} \cdot I_{\tau, \bbE''} 
    \subset I_{\tau \circ \rho, \bbE''}$ can be checked directly, and then it is 
    enough to prove that both sides have the same index in $\End(\bbE)$. This is 
    true since $I_{\rho, \bbE'}, I_{\tau, \bbE''}$ are locally principal, hence 
    arguing locally, in our situation the index is multiplicative.
        \qed
    \end{proof}

%

%

%
%
%
%
%
%
%
%
%
%
%
%
%
%
%
%
%
%
%

%
%
%
%
%
%
%
%
%
%
%
%
%
%
%
%
%
%
%
%
%
%
%
%
%
%
%
%
%
%
%
%
%
%
%
%
%
%
%
%
%
%
%
%
%
%
%
%
%
%
%
%
%
%
%
%
%
%
%

A useful consequence of \Cref{thm:Deuring_with_level_structure} is that, when $\rho \colon \bbE \to \bbE'$ is an isogeny of degree coprime to the level $N$, then we have
\begin{equation}\label{eq:IcapO}
	I_{\rho,\bbE'} = I_\rho \cap \OrderEnd(\bbE) \ ,
\end{equation}
where $I_{\rho}$ is the ideal associated to $\rho$ without considering level structures, as in \Cref{eq:deuring-corr}. Indeed one has
$I_{\rho,\curvelevel} \subset I_\rho \cap \OrderEnd(\curvelevel')$ by definition. To get the equality we compare indices: the index $[\OrderEnd(E):\OrderEnd(\bbE)] = [\Mattwo: \spanned{\Gamma}]$ is a divisor of $N^4$, hence it is coprime to $\deg\rho$ and in particular by  \Cref{thm:Deuring_with_level_structure} both sides in \eqref{eq:IcapO} have index $[\Mattwo: \spanned{\Gamma}] \cdot \deg \rho$ in  $\OrderEnd(E)$.

In the following theorem we state the other direction of the Deuring correspondence, \ie from ideals to elliptic curves with level structure.  We give the proof in \Cref{sec:proofs_level_structures}, since the statement is not needed in our applications.

\begin{restatable}{theorem}{DeuringWithLevelStructure}
	\label{thm:Deuring_with_level_structure2}
	Let $p,\level,\Gamma, \bbE, \bbE'$ be as in \Cref{lem:level_structure_isogeny_dual}. Then each locally-principal ideal of $\OrderEnd(\curvelevel)$ is 
	of the form $I_{\rho',\curvelevel'}$ for some $\curvelevel'$ and $\rho'$. 
		
	Moreover, if we suppose that $\Gamma$ is additively stable, then two ideals  $ I_{\rho',\curvelevel'}$, 
		$I_{\rho'',\curvelevel''}$ are equivalent if and only if $\curvelevel'$, $\curvelevel''$ are 
		isomorphic.
\end{restatable}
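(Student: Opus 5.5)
Given a locally principal left ideal $I$ of $\OrderEnd(\bbE)$, we reduce to the case of an integral ideal of norm coprime to $N$ and then transport the level structure along the classical Deuring isogeny. Local principality means $I\otimes\Z_\ell=\OrderEnd(\bbE)\otimes\Z_\ell\cdot\beta_\ell$ for each prime $\ell$. By strong approximation, or more elementarily by a CRT argument on a $\Z$-basis of $I$, we find $\beta\in I$ with $\norm(\beta)/\norm(I)$ coprime to $N$. Then $J:=I\overline{\beta}/\norm(I)$ is an equivalent integral ideal of norm coprime to $N$. Since equivalent ideals give isomorphic data (by \eqref{eq:equivalent_ideals}, $I_{\rho'}$ and $I_{\rho'}\cdot\gamma$ correspond to precomposing with a quaternion), it suffices to treat $J$. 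Because $\norm(J)$ is coprime to $N$, locally at each $\ell\mid\norm(J)$ we have $\OrderEnd(\bbE)\otimes\Z_\ell=\End(E)\otimes\Z_\ell$. Hence $J':=\End(E)J$ is a locally principal ideal of the maximal order with the same norm, and $J=J'\cap\OrderEnd(\bbE)$, checked prime by prime. By \Cref{th:deuring-corr}, $J'=I_\rho$ for an isogeny $\rho\colon E\to E'$ of degree coprime to $N$. We then set $\bbE'=(E',\rho(S))$, so that $\rho\in\Hom(\bbE,\bbE')$, and by \eqref{eq:IcapO} we get $I_{\rho,\bbE'}=I_\rho\cap\OrderEnd(\bbE)=J$. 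This proves surjectivity.

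For the second statement, one direction is immediate. If $\iota\colon\bbE'\to\bbE''$ is an isomorphism, the ideals $I_{\rho',\bbE'}$ and $I_{\iota\rho',\bbE''}$ coincide. Moreover, for any $\rho''$ the ideal $I_{\rho'',\bbE''}$ is equivalent to $I_{\iota\rho',\bbE''}$ by \eqref{eq:equivalent_ideals}.

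For the converse, suppose $I_{\rho'',\bbE''}=I_{\rho',\bbE'}\gamma$. By the first paragraph and \eqref{eq:equivalent_ideals} we may assume $\rho',\rho''$ have degree coprime to $N$, using \Cref{cor:HomLarge}\ref{item:Hom_coprime}. Tensoring with $\End(E)$ makes the underlying classical ideals $I_{\rho'}$ and $I_{\rho''}$ equivalent, so by classical Deuring there is an isomorphism $\iota\colon E'\to E''$ with $\rho''=\iota\circ\rho'\circ\hat\gamma$ up to scaling, where $\gamma$ is now viewed as an element of $\End(E)\otimes\Q$. The main obstacle is to show that $\iota$ maps $S'$ to $S''$, which is where additive stability is needed. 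The plan is as follows. The ideal $I_{\rho'',\bbE''}$ consists of the maps $\psi\circ\rho''$ with $\psi\in\Hom(\bbE'',\bbE)$. Using \Cref{lem:surjectivity_torsion} and a choice of $\psi$ of degree prime to $N$, the equality of ideals forces the matrix of $\iota$ (with respect to representatives of $S',S''$) to lie in $\spanned{\Gamma}$. To see this, compose with elements of $\Hom(\bbE'',\bbE)$ and $\Hom(\bbE,\bbE')$ of degree coprime to $N$, whose matrices are invertible and lie in $\spanned{\Gamma}$. One must also control $\gamma$: it lies in $\OrderEnd(\bbE)\otimes\Z_\ell$ up to units at primes $\ell\mid N$, since both ideals are locally equal to $\OrderEnd(\bbE)\otimes\Z_\ell$ there. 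Since $\iota$ has degree $1$, its matrix is invertible, hence lies in $\spanned{\Gamma}\cap\GLtwo=\Gamma$. Therefore $\iota(S')=S''$ and $\bbE'\cong\bbE''$. Without additive stability this step fails, consistent with \Cref{rmk:HomAndSpanGamma}.
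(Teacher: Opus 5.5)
Your argument is correct and takes a genuinely different route from the paper's. The differences are as follows.

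\textbf{Surjectivity.} The paper works in three steps.
\begin{enumerate}
\item It proves a general lemma: any integral ideal equivalent to some $I_{\rho,\mathbb{E}'}$ is itself of the form $I_{\rho',\mathbb{E}'}$. To do this it writes the ideal as $I_{\rho,\mathbb{E}'}\alpha/Q$ and uses some $\phi\in\mathrm{Hom}(\mathbb{E},\mathbb{E}')$ of degree prime to $Q$ to show $\rho\alpha\in Q\,\mathrm{Hom}(\mathbb{E},\mathbb{E}')$.
\item It moves $I$ to an equivalent ideal of index prime to $Np$.
\item It builds the isogeny by hand from $\ker(I)=\bigcap_{\alpha\in I}\ker\alpha$, counting its order via $\mathrm{End}(E)\otimes\mathbb{Z}_\ell\cong M_2(\mathbb{Z}_\ell)$ for $\ell\nmid Np$, and then compares indices.
\end{enumerate}
You only need norm prime to $N$. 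You then extend scalars to $\mathrm{End}(E)$ and invoke classical Deuring surjectivity together with \eqref{eq:IcapO}. This is shorter, black-boxes the kernel computation, and never needs to treat $p$ separately.

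\textbf{Converse.} The paper uses its lemma to reduce to \emph{equal} ideals, reads off $\ker\rho'=\ker\rho''$ from the ideal, and produces $\phi''=u\circ\phi'$ inside the ideal. You instead keep the quaternion $\gamma$ and get $\iota$ from the classical equivalence of categories. You then use that $\gamma$ is a unit of $\mathcal{O}(\mathbb{E})\otimes\mathbb{Z}_\ell$ for $\ell\mid N$. This gives $M_\iota=M_{\rho'}^{-1}M_\gamma^{-1}M_{\rho''}\in\langle\Gamma\rangle\cap\mathrm{GL}_2(\mathbb{Z}/N\mathbb{Z})=\Gamma$. Additive stability enters at the analogous step in both proofs. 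A small slip: from $I_{\rho''}=I_{\rho'}\gamma$ the relation is $\rho''=\iota\circ\rho'\circ\gamma$, not $\iota\circ\rho'\circ\widehat{\gamma}$ ``up to scaling''. This is harmless, since you only use that $\gamma$ is a local unit at the primes dividing $N$.

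\textbf{One step needs a better justification.} You say it suffices to treat $J$ ``by \eqref{eq:equivalent_ideals}''. That equation only says that replacing $\rho$ by another \emph{genuine} morphism $\rho'$ multiplies the ideal by $\widehat{\rho}\rho'/\deg\rho$. It does not say that an arbitrary integral ideal $J\gamma$ comes from a genuine $\rho\gamma\in\mathrm{Hom}(\mathbb{E},\mathbb{E}')$; that is exactly the paper's lemma in step 1. In your situation the fix is short:
\begin{itemize}
\item Since $I$ is locally principal, $\norm(I)\in I$.
\item Hence $\overline{\beta}\in I\overline{\beta}/\norm(I)=J=I_{\rho,\mathbb{E}'}$.
\item So $\overline{\beta}=\psi\circ\rho$ with $\psi\in\mathrm{Hom}(\mathbb{E}',\mathbb{E})$, i.e.\ $\beta=\widehat{\rho}\circ\widehat{\psi}$ with $\widehat{\psi}\in\mathrm{Hom}(\mathbb{E},\mathbb{E}')$.
\item Now \eqref{eq:equivalent_ideals}, applied in the direction in which it is stated, gives $I=J\beta/\norm(J)=I_{\widehat{\psi},\mathbb{E}'}$.
\end{itemize}
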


%
%
%
%
%
%
%
%
%
%
%
%
%
%
%
%
%
%
%
%
%
%
%
%
%
%
%
%
%
%
%
%
%
%
%
%
%
%
%
%
%
%
%
%
%
%
%
%
%
%
%
%
%
%
%
%
%
%
%
%
%
%
%
%
%
%
%
%
%
%
%
%
%
%
%
%
%
%
%
%
%
%
%
%
%
%
%
%
%
%
%
%
%
%
%
%
%

\begin{table}
	\centering
	\begin{tabular}{p{5cm} p{4.5 cm} p{2.4cm}}
		\toprule
		Curves side &  Quaternion side & Reference
		\\ \toprule
		$\varphi : \curvelevel \rightarrow \curvelevel'$ &
		left locally-principal ideal 
		\par\quad $I_{\varphi,\curvelevel'}$ of $\OrderEnd(\curvelevel)$  
		& 
		Theorems \ref{thm:Deuring_with_level_structure}, \ref{thm:Deuring_with_level_structure2}
		\\ \hline
		$\varphi : \curvelevel \rightarrow \curvelevel', \psi : \curvelevel \rightarrow \curvelevel'$ &
		Equivalent ideals $I_{\varphi,\curvelevel'} \sim I_{\psi,\curvelevel'}$  &
		\Cref{eq:equivalent_ideals}
		\\  \hline
		$\curvelevel'$ supersingular elliptic curves with level structure up to isomorphism     &
		$\mathrm{Cl}(\OrderEnd(\bbE)) =  $
		\par\quad $\{\text{locally principal ideal}\}/\text{equivalence}$
		&  \Cref{thm:Deuring_with_level_structure2}
		\\ \hline 
		$ \theta \in \OrderEnd(\curvelevel)$, up to units on the left &
		Principal ideal $\OrderEnd(\curvelevel)\cdot \theta$ 
		& 	\Cref{def:ideal_level_structure}
		\\ \hline
		$\dual{\varphi}$ &
		$\conjugate{I_\varphi}$ 
		&  \Cref{thm:Deuring_with_level_structure}		
		\\ \hline
		$\deg{\varphi}$ &
		$\Norm (I_{\varphi,\curvelevel'})$ 
		&  \Cref{thm:Deuring_with_level_structure}
		\\ \hline
		$\tau \circ \rho : \curvelevel \rightarrow \curvelevel' \rightarrow \curvelevel''$ &
		$I_{\tau \circ \rho} = I_\rho \cdot I_\tau$ 
		&  \Cref{thm:Deuring_with_level_structure}
		\\ 		\hline
		\rowcolor{gray!25}
		Supersingular $j$-invariants $j(E)$&
		Maximal orders in $B_{p,\infty}$ & 
		\\
		\rowcolor{gray!25}
		in $\mathbb{F}_{p^2}$ up to Galois conjugacy &
		$\O \cong \End(E)$ up to isomorphism
		&  
		\\
		\hline
	\end{tabular}
	
	\smallskip
	\caption{Generalization of the Deuring correspondence (see \cite{EC:DLLW23}) for supersingular elliptic curves with additively stable $\Gamma$-structure. The last line, in grey, is stated without level structures since it does not fully generalize}
	\label{tab:deuring}
\end{table}

\subsection{$\OrderEnd(E,S)$ for concrete choices of level structures}
\label{sec:orders_level_structures}

We describe the rings of endomorphisms $\OrderEnd(E,S)$ and their properties for
 Borel, scalar and split Cartan level structures. A summary is given  in \Cref{tab:orders_level_structures}.
Let $\OrderEnd_E = \End(E)$ be the endomorphism ring of the supersingular
elliptic curve $E$.

\paragraph{Borel Level Structures.}
Thanks to~\cite{arpin-sarah-borel-lvl-structures} we know that for Borel level 
structures the set $\OrderEnd(E,S)$ is an Eichler order in the quaternion 
algebra $\Bpinf$ .
As proven in~\cite{AC:DKLPW20}, if $I_S$ is the ideal associated to the 
isogeny with kernel given by $S$, then $\OrderEnd(E,S) = \Z + I_S$.  
For Borel level structures, if $N>2$,
 $\spanned{\Gamma}$ is equal to the 
set of upper triangular matrices, hence $ [M_2(\Zmod):\spanned{\Gamma}] = 
N$ and the covolume of $\Hom((E,S), (E', S'))$ is $\tfrac 14 pN$.

\paragraph{Scalar level structures.}
Given $E,S$ with $S \ni (P_S,Q_S) $ we have 
    $\OrderEnd(E,S) = \Z + \level \OrderEnd_E$,
where $\OrderEnd_E$ is the endomorphism ring of the curve $E$. In fact, whenever 
$\alpha \in \OrderEnd(E,S)$ we have that $\alpha(P_S) = \lambda P_S$ and 
$\alpha(Q_S) = \lambda Q_S$ for some $\lambda \in \Zmod$, that implies that 
$\alpha - [\lambda]$ maps both $P_S,Q_S$ to $0_E$, so $\alpha - [\lambda] \in 
\level\OrderEnd_E$.
We used that for scalar structures $\spanned{\Gamma}$ is the set of all scalar 
matrices. In particular $ [M_2(\Zmod):\spanned{\Gamma}] = N^3$ and the 
covolume of $\Hom((E,S), (E', S'))$ is $\tfrac 14 pN^3$.

\paragraph{Split Cartan level structures.}
Let  $S = \Gamma \cdot (P_S,Q_S)$ be a split Cartan structure on $E$ and
and suppose $N>2$ (otherwise split Cartan is the same as scalar).
Then, for any choice of an endomorphism $\gamma \in \OrderEnd_E$ such 
that\footnote{In other words we want $\gamma|_{E[N]}$ to diagonalize in basis 
$(P_S,Q_S)$.} $\gamma(P_S) = \lambda P_S$ and $\gamma(Q_S) = \mu Q_S$ with $\lambda 
\neq  \mu $ in $(\Zmod)^*$, we have $ \OrderEnd(E,S) = \Z[\gamma] + \level 
\OrderEnd_E$.
Indeed $\tomatrix{\gamma}{(N)} = \smt\lambda00\mu$, hence 
$\Zmod[\tomatrix{\gamma}{(N)}]$ is equal to $\spanned{\Gamma} = \{ \smt 
\ast00\ast\}$. In particular we have $ [M_2(\Zmod):\spanned{\Gamma}] = N^2$ and 
the covolume of $\Hom((E,S), (E', S'))$ is $\tfrac 14 pN^2$.

\begin{table}[h]
    \centering
    \renewcommand{\arraystretch}{1.2}
    \setlength{\tabcolsep}{6pt}
    \begin{tabular}[h]{lcccc}
        Level Structure     & Trivial              & Borel                & 
        Split Cartan         & Scalar \\ 
        \toprule
        $\Gamma$     & $\GLtwo$                & $\smt **0*  $         
            & $\left[ \begin{smallmatrix} * & 0 \\
            0 & * \end{smallmatrix}\right]$
            & $\left[ \begin{smallmatrix} \lambda & 0 \\
            0 & \lambda \end{smallmatrix}\right]$ \\
        \hline
        $\OrderEnd(E,S)$ &
        $\OrderEnd_E$ & $\Z + I_{S}$ 
            & $\Z[\gamma] + \level \OrderEnd_E$ 
            & $\Z + \level \OrderEnd_E$ \\

        \hline
        Covolume &
        $\frac{p}{4}$ & $\frac{p}{4} \cdot \level$ 
            & $\frac{p}{4} \cdot \level^2$ 
            & $\frac{p}{4} \cdot \level^3$ \\
        \bottomrule
    \end{tabular}

    \caption{
        Structure and covolume of the order $\OrderEnd(E,S)$
        for different level structures.
        }
    \label{tab:orders_level_structures}
\end{table}

\section{\SQInstructor's identification protocols}
\label{sec:sqisign_protocol_overview}

We now describe a paradigm for creating \SQIsign-like interactive
identification protocols. %
The general idea is simple and resembles the description of \SQIsign
we gave in \Cref{sec:preliminaries}, however it has several moving
pieces and instantiations may differ a lot. %
In the next sections we will take a closer look at two instantiations:
one using KLPT and 1-dimensional isogenies, the other using
2-dimensional isogenies.

Regardless of the details, the security of all instantiations is
comparable to that of \SQIsign. %
We analyze in this section soundness, which is proven similarly for
all instantiations, while we defer zero-knowledge of each variant to
the respective section.

\subsection{The Protocol} \label{sec:sqisign_protocol_new}

As usual, we choose a prime $p$ and a supersingular elliptic curve
$E_0/\F_{p^2}$ of known endomorphism ring. %
In practice we may take $p = 3 \mod 4$ and $E_0 : y^2 = x^3 + x$. %
Additionally we fix a level $\torsion$ coprime to $p$ and an
additively stable subgroup $\Gamma$ of $\GLtwo$.

The protocol being a sigma-protocol, we describe its phases one by
one.

\paragraph{Key generation.}
Like in \SQIsign, the secret key is a left ideal
$\secretIdeal \subset \OrderEnd(E_0)$, corresponding to an isogeny
$\secretIsogeny : E_0 \to \publicCurve$. %
Then an arbitrary $\Gamma$-structure $S_\pk$ is chosen on
$\publicCurve$. %
The public key is $\curvelevel_\pk := (\publicCurve,S_\pk)$.

The exact way in which $\secretIdeal$ is sampled is not crucial, as
long as recovering it from $\publicCurve$ is hard. %
Using modern techniques~\cite{NISTPQC-ADD-R2:SQIsign24} it can be
taken uniformly at random among the ideals of a fixed prime degree
$> p^2$, ensuring $\publicCurve$ is statistically close to being
uniformly distributed. %
For convenience, we assume $\norm(I_\sk)$ is coprime to $\level$.

Depending on the instantiation, $S_\pk$ may be chosen at key
generation time, or it may be deterministically computed from
$\publicCurve$ using a public algorithm and thus take no space in the
public key.

\paragraph{Commitment} is similar to key generation: a secret ideal
$\commitIdeal$ is sampled and the associated isogeny
$\commitIsogeny : E_0 \to \commitCurve$ evaluated. %
The curve $\commitCurve$ is sent to the verifier as commitment. %
The same sampling algorithm used for $\secretIdeal$ may be used here, in particular we assume $\norm(\commitIdeal)$ is coprime to $\level$.

\paragraph{Challenge.}
The verifier picks a uniformly random $\Gamma$-structure $S_\cmt$ on
$\commitCurve$ and sends it to the prover. %
Let $\curvelevel_\cmt = (\commitCurve, S_\cmt)$.

\paragraph{The response} is an isogeny
$\respIsogeny\in\Hom(\curvelevel_\pk, \curvelevel_\cmt)$ such that
$\respIsogeny(S_\pk)=S_\cmt$.

This is where we encounter the biggest variability: we may opt to
sample responses of smooth degree, so that they can be encoded as
chains of 1-dimensional isogenies like in earlier versions of
\SQIsign~\cite{NISTPQC-ADD-R1:SQIsign23}, or we may let them have
arbitrary degree and encode them as higher-dimensional isogeny
chains~\cite{NISTPQC-ADD-R2:SQIsign24}. %
The latter choice leads to more performant implementations, on par
with the version of \SQIsign currently submitted to NIST, as we will
show in \Cref{sec:hd}. %
But the former is still of interest as it offers the only known way to
instantiate advanced primitives such as ring
signatures~\cite{CiC:BorLaiLer24} or chameleon
hashes~\cite{forensic}. %
We analyze this alternative in \Cref{sec:klpt}.

Irrespective of this choice, the first step is to compute
$\Hom(\curvelevel_\pk, \curvelevel_\cmt)$. %
We will detail this step in \Cref{sec:compute-hom}.

\paragraph{Verification.}
The verifier checks that $\respIsogeny$ is indeed a valid isogeny
$\publicCurve \to \commitCurve$ and that
$\respIsogeny(S_\pk)=S_\cmt$. %
The protocol is summarized in \Cref{fig:triangle}.

\begin{figure}
  \centering
  \begin{tikzpicture}
    \node[anchor=south] (E0) at (0,0){$E_0$};
    \node[anchor=west] (com) at (3,-1){$\commitCurve$,$\textcolor{blue}{\commitLevelStructure}$};
    \node[anchor=east] (pk) at (-3,-1){$\publicCurve$,$\chLevelStructure$};

    \draw[red,-latex,dashed]
    (E0) edge node[auto] {$\commitIsogeny$} (com)
    (E0) edge node[auto,swap] {$\secretIsogeny$} (pk);
    \draw[-latex]
    (pk) edge
    node[above] {$\respIsogeny$}
    node[below] {$\respIsogeny( \chLevelStructure ) = \commitLevelStructure$}
    (com);
  \end{tikzpicture}
  \caption{\SQInstructor identification protocol. Secrets dashed
    in red. Challenge in blue.}
  \label{fig:triangle}
\end{figure}
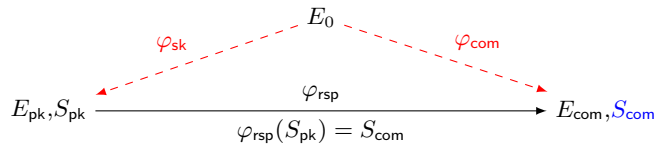

\subsection{Encoding and sampling level structures}
The group $\Gamma$ being a fixed parameter, a level structure $S$ can
simply be encoded by taking an arbitrary representative basis
$(P_S,Q_S)$. %
A random level structure is then drawn by sampling a random basis of
$E[\level]$. %
Some level structures have more compact representations: for example a
Borel level structure is equivalent to a cyclic group of order
$\level$, it may thus be encoded by a single point.

If $(P,Q)$ is some basis of $E[N]$ known to both prover and verifier,
for example because it is computed deterministically from $E$ using a
public algorithm, such
as~\cite[Algorithm~2.1]{NISTPQC-ADD-R2:SQIsign24}, then an alternative
is to represent $(P_S,Q_S)$ by the matrix $M_S\in\GLtwo$ such that
$(P_S,Q_S) = M_S\cdot(P,Q)$. %
	While transmitting $(P_S,Q_S)$ requires $4\log(q)$ bits for a suitable $q = p^r$, the matrix $M_S$ only
	takes $4\log(N)$ bits, which is less since $N^2 = O(q)$ by Hasse inequality and, later, by our choice of parameters. %
Because we are free to act by $\Gamma$, we may even reduce $M_S$ to
some standard form requiring only $\approx 4\log(N) - \log(|\Gamma|)$
bits. %
Sampling a random level structure is then equivalent to sampling a
random element in $\Gamma \backslash \GLtwo$. %

\subsection{Computing $\Hom(\curvelevel_\pk, \curvelevel_\cmt)$}
\label{sec:compute-hom}

The first step in computing the response is computing the module
$\Hom(\curvelevel_\pk, \curvelevel_\cmt)$. %
Thanks to \Cref{thm:Deuring_with_level_structure}, this is isomorphic
to an ideal
$I_{\rho,\curvelevel_\cmt}\subset \OrderEnd(\curvelevel_\pk)$ for some
$\rho:\curvelevel_\pk\to\curvelevel_\cmt$. %
We already know an isogeny
$\commitIsogeny\circ\dual\secretIsogeny:\publicCurve\to\commitCurve$,
we may thus use Algorithm $\ComputeSolutions$ to find
$I_{\rho,\curvelevel_\cmt}$.

\begin{algorithm}
  \caption{\ResponseIdeal}
  \label{alg:response_ideal}
  \begin{algorithmic}[1]
    \Require Curves $E_0,\publicCurve,\commitCurve$, with $N$-torsion bases
    $(P_0,Q_0),(P_\pk,Q_\pk),(P_\cmt,Q_\cmt)$;
    \Require A basis $\langle \omega_1, \ldots, \omega_4 \rangle$ of
    $\OrderEnd(E_0)$ and the matrices
    $\tomatrix{\omega_1}{\level}, \ldots, \tomatrix{\omega_4}{\level}$
    wrt $(P_0,Q_0)$;
    \Require Ideals $\secretIdeal$ and
    $\commitIdeal$ of norm coprime to $N$ s.t.
    $\publicCurve = \secretIdeal\star E_0$ and $\commitCurve = \commitIdeal\star E_0$;
    \Ensure Ideal $I_\rho\subset\OrderEnd(\publicCurve)$ such that
    $\rho(P_\pk) = P_\cmt$ and $\rho(Q_\pk) = Q_\cmt$.
    \State Let $M_\sk$ be the matrix of $\secretIdeal$ in $(P_0,Q_0)$ and $(P_\pk, Q_\pk)$;
    \State Let $M_\cmt$ be the matrix of $\commitIdeal$ in $(P_0,Q_0)$ and $(P_\cmt, Q_\cmt)$;
    \label{alg:ri:m_com}
    \State Let $\langle \beta_1,\ldots,\beta_4\rangle$ be a basis of the right order of $I_\sk$;
    \For {$1 \le i \le 4$}
    \State $\alpha_i \gets \secretIsogeny\beta_i\dual\secretIsogeny/\deg\secretIsogeny$,
    \label{alg:ri:order_end}
    \State $M_{\alpha_i} \gets M_\sk^{-1}(c_1M_{\omega_i} + \cdots + c_4M_{\omega_4})M_\sk$,
    where $\beta_i = c_1\omega_1 + \cdots + c_4\omega_4$;
    \label{alg:ri:order_mat}
    \EndFor
    \State $I_\zeta \gets \conjugate{\secretIdeal}\cdot \commitIdeal$;
    \label{alg:ri:ideal}
    \State $M_\zeta \gets \deg\secretIsogeny\cdot M_\sk^{-1}\cdot M_\cmt$;
    \label{alg:ri:isog_mat}
    \State\Return $I_\rho \gets \ComputeSolutions\bigl((\publicCurve,\commitCurve), (\alpha_i)_i, (M_{\alpha_i})_i, I_\zeta, M_\zeta\bigr)$.
  \end{algorithmic}
\end{algorithm}

With the notation above, and letting $(P_\pk, Q_\pk)$ and
$(P_\cmt, Q_\cmt)$ be arbitrary representatives of $S_\pk$ and
$S_\cmt$, the procedure is summarized in \Cref{alg:response_ideal}. %
To understand why it produces the desired output, observe that
$\OrderEnd(\publicCurve)$ is identified to the right order of
$\secretIdeal$ via
\begin{equation*}
  \begin{aligned}
    \iota_\pk : \OrderEnd(\publicCurve) &\to  \OrderEnd(E_0) \otimes \Q \\
    \alpha &\mapsto \tfrac{1}{\deg\secretIsogeny}\dual\secretIsogeny\alpha\secretIsogeny
             \ .
  \end{aligned}
\end{equation*}
Thus if $\beta_i = \iota_\pk(\alpha_i)$, then
Lines~\ref{alg:ri:order_end}--\ref{alg:ri:order_mat} recover
$\alpha_i$ and its matrix $M_{\alpha_i}$ in the basis
$(P_\pk,Q_\pk)$. %
Note that $M_\sk$ and the denominators of the $c_i$'s are invertible
modulo $\level$ thanks to the assumption
$\gcd(\level,\deg\secretIsogeny)=1$.

Then, at Lines~\ref{alg:ri:ideal}--\ref{alg:ri:isog_mat}, $I_\zeta$ is
the ideal corresponding to $\commitIsogeny\circ\dual\secretIsogeny$
and $M_\zeta$ its associated matrix in $(P_\pk,Q_\pk)$ and
$(P_\cmt,Q_\cmt)$. %
Hence $\ComputeSolutions$ returns an ideal $I_\rho \sim I_\zeta$
mapping $(P_\pk, Q_\pk)$ to $(P_\cmt, Q_\cmt)$ (and thus $S_\pk$ to
$S_\cmt$).

Finally, to represent $\Hom(\curvelevel_\pk, \curvelevel_\cmt)$, we
first compute $\OrderEnd(\curvelevel_\pk)$ using
\Cref{tab:orders_level_structures}, and then we obtain
$I_{\rho,\curvelevel_\cmt} = I_\rho \cap \OrderEnd(\curvelevel_\pk)$
thanks to \Cref{eq:IcapO}.

Pay attention to the fact that the isogeny $\rho$ associated to
$I_\rho$ is not the $\respIsogeny$ we are looking for, but only one in
the same $\Hom(\curvelevel_\pk, \curvelevel_\cmt)$. %
Indeed, having been computed by linear algebra, we cannot argue that
$\rho$ hides $\dual\secretIsogeny\circ\commitIsogeny$, nor do we
control its degree. %
Picking a suitable equivalent ideal to $I_{\rho,\curvelevel_\cmt}$ is
the topic of \Cref{sec:klpt,sec:hd}.

\begin{remark}
    \label{rmk:computation_torsion_matrices}
  In the discussion above it was crucial that we could compute
  $M_\cmt$ with respect to $(P_0,Q_0)$ and $ (P_\cmt, Q_\cmt)$ (and similarly for
  $M_\sk$). %
  When discrete logarithms modulo $\level$ can be efficiently
  computed, this can be done simply by evaluating $\commitIsogeny$ on
  $(P_0,Q_0)$ and then computing the discrete logarithm of
  $(\commitIsogeny(P_0),\commitIsogeny(Q_0))$ in base (a
  representative of) $S_\cmt$.

  When discrete logarithms cannot be computed, our options are more
  limited. %
  A workaround is for the prover to compute
  $(\commitIsogeny(P_0),\commitIsogeny(Q_0))$, mask it as
  \[(P_\cmt',Q_\cmt') :=
    M\cdot(\commitIsogeny(P_0),\commitIsogeny(Q_0))\] %
  via a random change of basis $M$, and commit to
  $\commitCurve,(P_\cmt',Q_\cmt')$. %
  Then the challenge $S_\cmt$ can be submitted as a matrix 
  $M' \in \Gamma \backslash \GLtwo$
  , so that $S_\cmt = M' \cdot (P_\cmt',Q_\cmt')$, and the prover is able to
  compute $M_\cmt$.
  This slightly increases the protocol's bandwidth, since the curve needs to be 
  sent together with a basis of its $\level$-torsion,
  but has a negligible impact on performance. %
\end{remark}

\subsection{Security}
\label{sec:soundness_general}

We conclude this section with an analysis of the security of the
identification protocol, insofar as the arguments apply to any
instantiation.

\paragraph{Special soundness}
is proven analogously to \SQIsign: we prove that two responses with
distinct challenges for the same commitment compose to a non-scalar 
endomorphism.
Actually, we can prove the endomorphism provides a 
witness for the stronger relation
\begin{equation}
    \label{eq:rel-oneend}
    \ROneEnd := \{(\bbE,\omega) \;|\; \omega\in\End(E)\setminus\OrderEnd (\bbE) 
    \}\ ,
\end{equation}
for $\bbE$ a supersingular elliptic curve with $\Gamma$-level structure.
Since we assume $\Gamma$ contains the scalar matrices, $\omega$ is not a scalar 
endomorphism. 
So, by~\cite{EC:PagWes24}, $\ROneEnd$ is a hard
relation equivalent to the endomorphism ring problem.

\begin{theorem}
    \label{thm:soundness}
    The protocol in \Cref{sec:sqisign_protocol_new} is $2$-special
    sound for $\ROneEnd$. The knowledge error is
    \begin{equation} \label{eq:soundness_error}
      \varepsilon = [\GLtwo : \Gamma]^{-1} \ .
    \end{equation}
    If an adversary breaks soundness with probability $w$ and expected
    running time $r$ for a uniform public key $\publicCurve$, then
    there is an algorithm computing a witness for $\ROneEnd$ with
    expected running time $O(r/(w - \varepsilon))$.
\end{theorem}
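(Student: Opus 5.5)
We follow the classical \SQIsign argument: build an extractor from two accepting transcripts with the same commitment and distinct challenges, and then invoke a standard result turning special soundness into knowledge soundness.

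\textbf{Extraction.} Suppose we have transcripts $(\commitCurve, S_\cmt, \varphi)$ and $(\commitCurve, S'_\cmt, \varphi')$ with $S_\cmt \neq S'_\cmt$. Both are accepting, so $\varphi(S_\pk) = S_\cmt$ and $\varphi'(S_\pk) = S'_\cmt$. Since $\Gamma$ is additively stable, \Cref{rmk:HomAndSpanGamma} shows that both isogenies have degree coprime to $\level$. The extractor outputs $\omega := \dual{\varphi'}\circ\varphi \in \End(\publicCurve)$.

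\textbf{Why $\omega$ is a witness.} Fix representatives $(P,Q)$ of $S_\pk$ and $(P_c,Q_c)$ of $S_\cmt$. Then $\varphi'(P,Q) = A\cdot(P',Q')$ for a representative $(P',Q')$ of $S'_\cmt$, and $(P',Q') = B\cdot(P_c,Q_c)$ for some $B\in\GLtwo$ with $B\notin\Gamma$. Assume for contradiction that $\omega\in\OrderEnd(\bbE_\pk)$. Because $\deg\omega$ is coprime to $\level$ and $\Gamma$ is additively stable, this forces $\omega(S_\pk)=S_\pk$. Using \Cref{cor:HomLarge}\ref{item:dual_in_hom}, which needs $\Gamma$ to contain the scalars (as assumed in the text just before the theorem), we get $\dual{\varphi'}(S'_\cmt) = S_\pk$. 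Hence
\[
\dual{\varphi'}(S_\cmt) = \dual{\varphi'}(\varphi(S_\pk)) = \omega(S_\pk) = S_\pk = \dual{\varphi'}(S'_\cmt).
\]
Since $\dual{\varphi'}$ has degree coprime to $\level$, it induces a bijection on bases of the $\level$-torsion that commutes with the $\GLtwo$ action. It therefore induces a bijection on $\Gamma$-structures, so $S_\cmt = S'_\cmt$, a contradiction. Thus $\omega\notin\OrderEnd(\bbE_\pk)$, i.e.\ $(\bbE_\pk,\omega)\in\ROneEnd$. The same matrix bookkeeping can be done directly: $M_\omega = M_\varphi M_{\dual{\varphi'}}$ lies in $\Gamma$ if and only if $M_\varphi$ and $M_{\varphi'}$ lie in the same coset, up to the scalar $\deg\varphi'$.

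\textbf{Knowledge error and running time.} The challenge space is the set of $\Gamma$-structures on $\commitCurve$, which is in bijection with $\Gamma\backslash\GLtwo$. It therefore has size $[\GLtwo:\Gamma]$, and with uniform sampling a prover who knows nothing succeeds with probability at most $1/[\GLtwo:\Gamma]$. We then apply the standard generic result that $k$-special soundness implies knowledge soundness, with error $(k-1)/|\text{challenge space}|$; for example the rewinding extractor of Attema--Cramer--Kohl, or the classical heavy-row argument. This gives knowledge error $\varepsilon = [\GLtwo:\Gamma]^{-1}$ and an extractor with expected running time $O(r/(w-\varepsilon))$.

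\textbf{Main obstacle.} The step needing the most care is the claim that $\omega\in\OrderEnd$ forces $S_\cmt=S'_\cmt$. This is where additive stability (so that degree-coprime elements of $\Hom$ actually map structures to structures) and the presence of scalar matrices in $\Gamma$ (to handle the dual) are used. Everything else is standard bookkeeping.
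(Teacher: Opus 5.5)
Your proposal is correct and follows the paper's proof essentially step for step. You extract $\dual{\varphi'}\circ\varphi$ (the paper uses $\dual\varphi\circ\varphi'$, which is symmetric), show that membership in $\OrderEnd(\bbE_\pk)$ together with the dual property of \Cref{cor:HomLarge} forces the two challenge structures to coincide, count cosets of $\Gamma$ for the knowledge error, and cite the standard special-soundness-to-knowledge-soundness result. You also invoke additive stability explicitly to pass from $\omega\in\OrderEnd(\bbE_\pk)$ to $\omega(S_\pk)=S_\pk$, which makes precise a step the paper leaves implicit.
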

\begin{proof}
    Consider two valid transcripts $(\commitCurve,S_\cmt,\respIsogeny)$ and 
    $(\commitCurve,S_\cmt',\respIsogeny')$ with $S_\cmt \ne S_\cmt'$.
    By definition $\respIsogeny(S_\pk) = \commitLevelStructure$ and 
    $\respIsogeny'(S_\pk) = \commitLevelStructure'$.
    Let $\beta = \dual\respIsogeny \circ \respIsogeny'$ and 
    assume $\beta \in \OrderEnd(\curvelevel_\pk)$, then
    \begin{equation*}
         S_\pk =  \beta(S_\pk) = \dual\respIsogeny \circ \respIsogeny'(S_\pk) =
        \dual\respIsogeny (S_\com ') \ .
    \end{equation*}
    Since $\Gamma$ is additively stable, it is closed under scalar 
    multiplication, thus by \Cref{item:dual_in_hom} from 
    \Cref{cor:end_sub_order} we have $\respIsogeny(S_\pk) = S_\com ' \neq 
    S_\com$, a contradiction.
    So $\beta$ is a witness for $\ROneEnd(\publicCurve)$.
    We conclude that the protocol is $2$-special sound.  
    To compute the knowledge error we
    just need to count the number of cosets of $\Gamma$ in $\GLtwo$,
    that is the index
    $[\GLtwo : \Gamma] = \# \GLtwo/\# \Gamma$.
    
    The second part of the theorem follows by standard techniques, see for 
    example~\cite[Theorem~1 and Definition~2]{sigma_protocols}.
    \qed
\end{proof}

\paragraph{Zero Knowledge.}
To prove zero-knowledge we would need to exhibit a simulator that on
input $E_\pk$ produces a random isogeny
$\respIsogeny':\publicCurve \to \commitCurve'$ and computes
$S_\cmt' := \respIsogeny'(S_\pk)$, so that
$(\commitCurve',S_\cmt',\respIsogeny')$ is distributed identically to a valid transcript.

However the existence of such a simulator depends on how the response
is computed. %
Like in \SQIsign, if we use a KLPT-style algorithm to compute a
1-dimensional response, then the distribution of $\respIsogeny$ is
very poorly understood, and the only thing we can do is argue
computational zero-knowledge. %
We briefly discuss this in \Cref{sec:klpt_rerandomization}. %

If instead we use a higher-dimensional method to produce the response,
we are faced with the obstacle that no polynomial time algorithm is
known to compute a large arbitrary degree isogeny from $\publicCurve$
without knowing its endomorphism ring. %
The workaround used in \SQIsign is to prove security in a model where
the simulator has access to an oracle producing such
isogenies~\cite{NISTPQC-ADD-R2:SQIsign24}, or at least where it is
provided non-interactively with a polynomially sized \emph{hint}
string~\cite{C:ABDPW25}. %
We adapt these proof techniques to \SQInstructor in
\Cref{sec:hd_zk,sec:proofs_sqisign_protocol_new}.

\smallskip

Whatever the case is, if we take Zero-Knowledge for granted, then the
we can apply the Fiat-Shamir transform to the protocol to obtain a
signature scheme as secure as \SQIsign: its \EUFCMA security reduces
to the hardness of $\ROneEnd$, and thus to \Cref{prob:endring}
via~\cite{EC:PagWes24}.

\paragraph{Parameter sizes}
The best classical attacks against \Cref{prob:endring} having
complexity $\tilde{O}(\sqrt{p})$, the bit-size of $p$ should be taken
to be twice the security parameter $\secpar$. %
The primes defined in \SQIsign v1~\cite{NISTPQC-ADD-R1:SQIsign23} or
v2~\cite{NISTPQC-ADD-R2:SQIsign24} are valid choices for a
1-dimensional or a higher-dimensional implementation, respectively.

The level $\level$ may then be taken according to the available
torsion in $E_0(\F_{p^2})$, i.e., according to the prime factors of
$p\pm 1$. %
For a 1-dimensional implementation this may be a product of some
mid-sized factors of $p^2-1$, or a power of a small prime not dividing
the degree of the response. %
For a higher-dimensional implementation it appears to be most
efficient to take a power of $2$.

Finally the size of $\level$ will be determined by the size of the
challenge space: which we just saw it is $[\GLtwo : \Gamma]$.  %
We want this quantity to be $\approx 2^\secpar$. %
Recall that
$\# \GLtwo = \torsion^4 \prod_{p \mid \torsion} (1 - \frac{1}{p^2} )(1
- \frac{1}{p})$, and that
$\#\Gamma = \level \phi(\level)^2,\, \phi(\level)^2,\, \phi(\torsion)$
for Borel, split-Cartan and scalar level structures respectively.
Thus we need
$\log(\torsion) \approx \secpar, \frac{\secpar}{2}, \frac{\secpar}{3}$
respectively.

\section{KLPT for level structures} \label{sec:klpt}

We now come to the problem of finding an element of
$\Hom(\curvelevel,\curvelevel')$ of smooth norm. %
Here we focus solely on extending to the \SQInstructor framework the
KLPT algorithm~\cite{klpt}, which computes a connecting ideal of
smooth norm given $\OrderEnd(E)$ and $\OrderEnd(E')$. %
Then, standard techniques~\cite{NISTPQC-ADD-R1:SQIsign23} let us
translate this ideal into a 1-dimensional isogeny, to be used, e.g.,
as response $\respIsogeny$ in the identification protocol.

These algorithms lead to larger degree isogenies and less efficient
implementations than what is achievable through higher-dimensional
techniques, but, unlike those, produce outputs that can be efficiently
simulated (under an \emph{ad hoc} computational assumption). %
This feature makes 1-dimensional techniques unavoidable in the context
of advanced protocols such as ring signatures~\cite{CiC:BorLaiLer24}
and chameleon hashes~\cite{forensic}.

\paragraph{Notation.} We call integral ideals of $\Bpinf$
ideals for short.
For $I$ a fractional ideal, $\O_L(I)$ is its left order and $\O_R(I)$ its right one.
Two ideals $I,J$ are compatible if $\O_L(J)=\O_R(I)$. 
For compatible ideals $I,J$, the pullback of $J$ through $I$ is $[I]^*J=IJ+\normklpt{J}\O_L(I)$.
For $I,J$ left ideals of a maximal order $\O$, the pushforward of $J$ through $I$ is $[I]_*J=\frac{1}{\normklpt{I}}\bar I (I\cap J)$.
Similar to \cite[Section 2.4]{AC:DKLPW20}, $\O_0=\O(E_0)$ is a special maximal order.

\paragraph{Solving norm equations with level structure.}
In the following, we are given a left $\OrderEnd(\curvelevel)$-ideal
$\LatticeOfSolutions$ defined as in \Cref{eq:PGLideals}, and we want to compute an
equivalent ideal of smooth norm. %
Recall that, in quaternion terms, $\LatticeOfSolutions$ is the intersection of an 
$\OrderEnd(E)$-ideal with the suborder $\OrderEnd(\curvelevel)$, as summarized in 
\Cref{tab:orders_level_structures}.
Thus, for $\O$ a maximal order and  $\O_{\torsion}$ a suborder associated to a level structure, we 
say that an $\O$-ideal $J$ \textit{is equivalent in $\O_{\torsion}$} or 
\textit{is equivalent respecting the level structure associated to 
  $\O_{\torsion}$}
to an input $\O$-ideal $I$ if $J$ is such that 
$J=I\frac{\bar\alpha}{\normklpt{I}}$ with $\alpha$ in $I\cap \O_{\torsion}$.
This also implies that $J\cap\O_\level$ and $I\cap\O_\level$ are
equivalent $\O_\level$-ideals.
We therefore need to solve norm equations \textit{with 
level structure} or \textit{in $\O_{\torsion}$}, which means finding an ideal 
equivalent to the input in $\O_{\torsion}$ and of smooth norm.

\paragraph{Prior work.}
The papers on 1-dimensional \SQIsign~\cite{AC:DKLPW20,EC:DLLW23} propose the algorithms 
\GenericKLPT and \SigningKLPT, which transform a quaternion ideal of 
a maximal order into an equivalent ideal of smooth or even power-of-two norm.
The difference between the two algorithms is that \SigningKLPT outputs an ideal 
following a distribution which is assumed to be computationally 
indistinguishable from uniformly random ideals of the same norm and equivalence 
class.
\GenericKLPT is a simpler version than \SigningKLPT, which leaves out some 
rerandomization procedures to focus solely on smooth-norm ideals in the correct 
equivalence class. It is therefore suitable to first adapt \GenericKLPT, then 
the rerandomization.

In his PhD thesis~\cite{Leroux}, Leroux analyses these KLPT versions and creates 
more variants. \GenericKLPT consists in some equivalence-taking to get prime 
norm inputs, a pullback to $\O_0$ of the input ideal, and an algorithm for 
resolution in a special order $\O_0$.
This last algorithm is called \IdealEichlerNorm~\cite[Algorithm 10]{Leroux} and 
solves (in $\O_0$) a norm equation in the intersection of an ideal $I$ and an 
Eichler suborder $\Z+I_c$, where $I_c$ is a primitive ideal of a maximal order.
While Leroux often assumes $I_c$ to be of prime norm, 
he insists this is not a requirement, but a low number of distinct prime factors 
is better for performance.
Leroux also describes a similar algorithm solving (in $\O_0$) a norm equation in 
the intersection of an ideal $I$ and a suborder $\Z+\torsion I_c$ for some 
integer $\torsion$, called  \IdealSuborderEichlerNorm~\cite[Algorithm 
14]{Leroux}.
A copy of the pseudocode for both of these algorithms is given in 
\Cref{sec:pseudocode}.
In the remainder of this section, we use this \IdealSuborderEichlerNorm and an 
input treatment derived from \SigningKLPT to create an algorithm for smooth-norm 
equivalence with level structure.

\paragraph{Plan of the section.} We focus on the scalar level structure 
because solving this case allows us to solve the problem also for other level 
structures (see \Cref{sec:klpt_others}). 
For simplicity, we restrict the discussion here to input ideals $I$ such that
$\normklpt{I}$ is a square modulo $\torsion$. 
An explanation of this choice and a generalization to the non-square case can 
be found in \Cref{sec:squares}. 

In the following, we show how 
\IdealSuborderEichlerNorm~\cite[Algorithm 14]{Leroux} can be used to solve norm 
equations associated to scalar level structures. 
Then, we adapt the initialization steps
of \GenericKLPT to the scalar case, and evaluate the output length of our 
KLPT variant.
Next, we show how to adapt the rerandomization step of \SigningKLPT to take 
equivalences respecting a level structure.
Pseudocode for the resulting \ScalarSigningKLPT algorithm can be found 
in~\Cref{alg:scalar_klpt}.
Finally, we explain how to solve the non-scalar case, including a more efficient 
implementation for the Borel case using \IdealEichlerNorm.
We also present our proof-of-concept implementation.
For the sake of conciseness we refer to \Cref{sec:proofs_klpt} for the proofs
of the lemmas and propositions stated in this section.

\subsection{The use of \IdealSuborderEichlerNorm in \ScalarSigningKLPT}

We search a $\beta$ in $\LatticeOfSolutions = I \cap (\Z+\torsion \O)$ of the correct norm, then 
return $J=I\frac{\bar\beta}{\normklpt{I}}$.
However, all KLPT-related algorithms solve norm equations only in special orders
like $\O_0$ or suborders of those.
Following prior work such as~\cite{AC:DKLPW20}, we therefore search for $\beta$ 
in the intersection $I\cap (\Z+\torsion \O) \cap \O_0$.

Let us show that \IdealSuborderEichlerNorm can be applied to this problem.
Denote $I_c$ a primitive ideal connecting $\O_0$ to $\O$. Then the Eichler order 
$\O\cap\O_0$ equals $\Z+I_c$. 
Thanks to \Cref{lemma:scalarKLPT}, this means that we search in $I$ intersected with  
$\Z+\torsion I_c$. 
Furthermore, since $\Z+\torsion I_c$ is an order is contained in $\Z+ I_c$, 
the intersection of  $\Z+\torsion I_c$ with $I$ is equal to its intersection with the pullback $[I_c]^*I$.
Since $[I_c]^*I$ is a left $\O_0$-ideal, searching in its intersection with $\Z+\torsion I_c$ 
is exactly the problem solved by \IdealSuborderEichlerNorm.

\begin{restatable}{lemma}{lemmaScalarKLPT}
    \label{lemma:scalarKLPT}
    For $I$ a primitive right ideal of a maximal order $\O$ in \Bpinf.
    Then $(\Z+I)\cap(\Z+N\O)=(\Z+NI)$ for $N\in\Z$ coprime to $\normklpt{I}$.
\end{restatable}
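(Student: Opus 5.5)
We prove the two inclusions separately; the reverse inclusion is immediate and all the work is in the forward one. For $(\Z+NI)\subseteq(\Z+I)\cap(\Z+N\O)$: since $I\subseteq\O$ is integral, $NI\subseteq I$ and $NI\subseteq N\O$. Hence $\Z+NI$ lies in both $\Z+I$ and $\Z+N\O$.

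For the forward inclusion, take $x\in(\Z+I)\cap(\Z+N\O)$ and write
\[
x=a+i=b+N\omega, \qquad a,b\in\Z,\; i\in I,\; \omega\in\O .
\]
Then $i=(b-a)+N\omega$, so $i\equiv (b-a)\pmod{N\O}$. The idea is to shift the integer part so that $i$ becomes a multiple of $N$ inside $I$. For this we want an integer $c\in I\cap\Z$ with $c\equiv b-a\pmod N$.

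The key step uses $\gcd(N,\normklpt{I})=1$. Since $I$ is an integral ideal of norm $\normklpt{I}$, we have $\normklpt{I}\in I$: for any $\gamma\in I$, $\normklpt{\gamma}=\gamma\bar\gamma$ is divisible by $\normklpt{I}$, and $\normklpt{I}$ lies in $I$ in the standard way (e.g.\ $I\bar I=\normklpt{I}\O_R$ and integrality). Therefore $\normklpt{I}\Z\subseteq I\cap\Z$. Because $\normklpt{I}$ is invertible mod $N$, we can choose $c=\normklpt{I}\,u$ with $u\equiv \normklpt{I}^{-1}(b-a)\pmod N$. This gives $c\in I\cap\Z$ and $c\equiv b-a \pmod N$.

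Now rewrite $x=(a+c)+(i-c)$. Here $i-c\in I$. We also have $i-c=(b-a-c)+N\omega\in N\O$, since $b-a-c\in N\Z$. So $i-c\in I\cap N\O$, and it remains to show $I\cap N\O=NI$. This is the main (though still easy) obstacle, and we would argue it locally. At primes $\ell\nmid N$ we have $N\O\otimes\Z_\ell=\O\otimes\Z_\ell$, so both sides equal $I_\ell$. At primes $\ell\mid N$ we have $\ell\nmid\normklpt{I}$, so $I_\ell=\O_\ell$, and both sides equal $N\O_\ell$. Since lattices are determined by their localizations, $I\cap N\O=NI$.

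Alternatively, one can argue directly. If $y\in I\cap N\O$, then $y/N\in\O$ and $\normklpt{I}\cdot y/N\in I$ (as $I$ is a one-sided $\O$-ideal containing $\normklpt{I}\O$ on the appropriate side). Also $N\cdot y/N=y\in I$. Since $\gcd(N,\normklpt{I})=1$, Bézout gives $y/N\in I$.

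Hence $x=(a+c)+N(y/N)\in\Z+NI$ with $y=i-c$, which completes the forward inclusion. Primitivity of $I$ is not really needed beyond guaranteeing that the norm is well defined and $\normklpt{I}\O\subseteq I$. The one point to state carefully is the side on which $\O$ acts, i.e.\ that $I$ is a right $\O$-ideal. The containment $\normklpt{I}\O\subseteq I$ holds either way, since $\normklpt{I}$ is central.
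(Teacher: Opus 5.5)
Your proof is correct and follows essentially the same route as the paper. Both proofs take the easy inclusion, then shift the integer part by an element of $\Z\cap I$ with the right residue mod $N$ to land in $I\cap N\O$. The paper does this shift by a CRT on $\min(I\cap\Z)$ and $N$, while you use $\normklpt{I}\in I$ invertible mod $N$. Finally, both show $I\cap N\O\subseteq NI$ by exactly the B\'ezout argument you give as your ``direct'' alternative; your localization argument is an equivalent variant.
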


\subsubsection{Creating inputs to \IdealSuborderEichlerNorm}
\label{sec:klpt_inputs}

We now show how to transform the input ideal $I$  (left ideal of a maximal order $\O$) 
into a suitable input for \IdealSuborderEichlerNorm. 
We leave the rerandomization step to \Cref{sec:klpt_rerandomization}.
To ensure efficiency, we need to transform the connecting ideal $I_c$ from $\O$ to $\O_0$
so that it is of prime norm. 
Also, the input ideal $I$ must be pulled back to $\O_0$ through the transformed 
$I_c$ (so that we work at $\O_0$) and replaced by a prime-norm equivalent (again for efficiency).
Finally, none of these steps should change the equivalence class of $I$ in the order 
$\Z+\torsion \O$.

For the connecting ideal $I_c$, we use \EquivalentPrimeIdeal (as 
in~\cite{AC:DKLPW20}) to find an equivalent 
$I_c'=I_c\frac{\alpha_c}{\normklpt{I_c}}$ of prime norm $N_c$.
Conjugating $I$ and $\O$ with $\frac{\bar\alpha_c}{\normklpt{I_c}}$ is an 
isomorphism, and therefore does not modify any problem input. 

Then, since $N_c$ is a large prime, $I$ is (with overwhelming probability)
of norm coprime to $N_c$ and can be pulled back through $I_c'$. 
This results in a left $\O_0$-ideal $I_1$ of norm 
equal to $\normklpt{I}$. To get a prime-norm equivalent without modifying its equivalence 
class with respect to  $\Z+\torsion \O_0$, one uses an element 
$\delta$ in $I_1\cap(\Z+\torsion \O_0)$ with $\frac{\normklpt{\delta}}{\normklpt{I_1}}$ prime.
Therefore $I_0=I_1\frac{\bar\delta}{\normklpt{I_1}}$ is a prime-norm 
input ideal which is in the same class with respect to $\Z+\torsion \O_0$ as $I$ 
is with respect to $\Z+\torsion \O$, as explained in the preceding paragraph.
This results in \Cref{alg:scalar_klpt}.
We set the input exponent $e$ according to the following proposition (proven in 
\Cref{sec:proofs_klpt}).
\begin{restatable}{proposition}{propositionScalarKLPT}
    \label{prop:scalarKLPT}
    There is a variant of \ScalarSigningKLPT which produces output for 
    $\ell^e>p^{9/2} \torsion^{10}\epsilon$ with $\epsilon$ polylogarithmic in 
    $(p\torsion)$, under plausible assumptions.
\end{restatable}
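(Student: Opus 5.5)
The bound will come from adding up the size losses in each stage of \ScalarSigningKLPT, measured in powers of $p$ and $\torsion$. In each stage of \IdealSuborderEichlerNorm we must represent some target norm $\ell^{e'}$ by an element of a fixed coset of a lattice. The heuristic "plausible assumption" is that such a norm equation has a solution once $\ell^{e'}$ exceeds the size of the shortest vectors involved by a polylogarithmic factor. The proof therefore tracks, step by step, the size of the target norm needed for that condition to hold.

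First I will bound the inputs. The connecting ideal $I_c'$ returned by \EquivalentPrimeIdeal has prime norm $N_c\approx\sqrt p$ up to polylog factors, by Minkowski on a lattice of covolume $\approx p^2$. The element $\delta\in I_1\cap(\Z+\torsion\O_0)$ lives in a lattice of index $\torsion^3$ in $I_1$. By \Cref{cor:HomLarge} and the covolume in \Cref{tab:orders_level_structures}, we can take $\normklpt{I_0}=\normklpt{\delta}/\normklpt{I_1}\approx \sqrt p\,\torsion^{3/2}$, again up to polylogs.

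Next I will follow the structure of \IdealSuborderEichlerNorm. Recall how it works: it first finds $\gamma\in\O_0$ with $\normklpt{\gamma}=\normklpt{I_0}\ell^{e_1}$ via \RepresentInteger. It then finds $\mu_1$ solving a linear condition modulo $\normklpt{I_0}$ (the ideal condition), and $\mu_2$ solving a condition modulo $\torsion N_c$ (the suborder $\Z+\torsion I_c'$ condition, using \Cref{lemma:scalarKLPT} to reduce to this). Each is done by strong approximation in $\Z\langle j,k\rangle$-type lattices, with output norms $\ell^{e_2},\ell^{e_3}$. The final $\beta=\gamma\mu_1\mu_2$-style product has $\ell^{e}=\ell^{e_1+e_2+e_3}$ up to removing $\normklpt{I_0}$. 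Each strong approximation modulo a modulus $M$ needs $\ell^{e_i}\gtrsim p\,M^{4}$, or $p M^3$ in the best known variant. I will instantiate this with
\[
M=\normklpt{I_0}\approx\sqrt p\,\torsion^{3/2}\quad\text{and}\quad M=\torsion N_c\approx \torsion\sqrt p ,
\]
and add \RepresentInteger's requirement $\ell^{e_1}\gtrsim p\,\normklpt{I_0}$.

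Summing the exponents should give the stated $p^{9/2}\torsion^{10}$. The powers of $\torsion$ come from the $M^{k}$ terms with $M$ divisible by $\torsion$, and the powers of $p$ come from the factors $\sqrt p$ in $N_c$ and $\normklpt{I_0}$ together with the lattice covolume $p$. This is exactly why the statement says "a variant": I will pick, for each sub-step, whichever approximation variant (combining or splitting the modulus via CRT) minimizes the total exponent.

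The main obstacle is the strong approximation step modulo $\torsion N_c$. Here the quadratic condition must be solved simultaneously with the condition that the product stays in the scalar suborder, which forces the residue of $\mu_2$ modulo $\torsion$ to be scalar up to the square class of $\normklpt{I}$; this is where the square assumption of \Cref{sec:squares} enters. I would first try to reduce this to a CVP in a lattice of determinant $\approx p(\torsion N_c)^2$ and apply the usual Gaussian heuristic for solutions of the resulting binary quadratic equation. Polylog factors from prime-search (Cornacchia success, primality of cofactors) are absorbed into $\epsilon$.
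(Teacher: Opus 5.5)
There is a genuine gap. The final count is asserted rather than carried out, and the model of \IdealSuborderEichlerNorm you count with is not that algorithm.

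You take $\gamma$ from \RepresentInteger and then impose the ideal and suborder conditions by strong approximation. That is the shape of \IdealEichlerNorm. In \IdealSuborderEichlerNorm (\Cref{alg:IdealSuborderEichlerNorm}), $\gamma$ is itself produced by a first \FullStrongApproximation modulo the level $N$, for a random $(C_2:D_2)\in\mathbb{P}^1(\Z/N\Z)$. The reason is that modulo $N$ a strong-approximation output is congruent to an element of $j\Z[i]$. It is this special shape of $\gamma$ that lets the second call, through the choice of $(C_2':D_2')$, make $\gamma\mu$ scalar modulo $N$, i.e.\ land in $\Z+N\O_0$. This is exactly the ``main obstacle'' you leave open, and it costs $\mathsf{n}(\gamma)\gtrsim pN^3$.

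The second step is a single \FullStrongApproximation modulo the CRT-combined $N_cN\,\mathsf{n}(I_0)$. Since $\mathsf{n}(\beta)=\mathsf{n}(I_0)\ell^e$, this yields the requirement of \cite[Section 3.4]{Leroux}, which is where the paper's proof starts:
\[ \ell^e \gtrsim \frac{pN^3}{\mathsf{n}(I_0)}\cdot p\bigl(N_cN\,\mathsf{n}(I_0)\bigr)^3 = p^2(N^2N_c)^3\,\mathsf{n}(I_0)^2 . \]
Your own numbers give factors $p^{3/2}N^{3/2}$, $p^{5/2}N^{9/2}$ and $p^{5/2}N^3$, i.e.\ $p^{13/2}N^9$, or $p^{11/2}N^9$ with a combined modulus. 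So the claim that the exponents ``sum to'' $p^{9/2}N^{10}$ fails even within your model. That model also does not bound the real algorithm, since it omits the first approximation modulo $N$.

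Given the displayed bound, the paper's proof is a substitution:
\begin{itemize}
\item $N_c\approx\sqrt p$.
\item $\mathsf{n}(I_0)\lesssim N^2\lambda_4$ up to polylog factors, where $\lambda_4\approx\sqrt p$ is the fourth normalized minimum of the pulled-back ideal ($I_1$ in your notation). This holds because $I_1\cap(\Z+N\O_0)$ contains $NI_1$.
\item Together these give $p^2N^6\cdot p^{3/2}\cdot N^4p=p^{9/2}N^{10}$.
\end{itemize}
The ``variant'' in the statement is the fallback to another special extremal order, with a recomputed connecting ideal, when $\lambda_4$ or $N_c$ exceeds $\sqrt p$. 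It is not a per-step choice of approximation routine.

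Your Gaussian-heuristic estimate $\mathsf{n}(I_0)\approx\sqrt p\,N^{3/2}$ would give $p^{9/2}N^9$ in the formula above, which is even below the stated threshold. As you justify it, though, it does not suffice. Minkowski's bound (\Cref{cor:HomLarge}) controls only the first minimum. Finding $\delta$ with $\mathsf{n}(\delta)/\mathsf{n}(I_1)$ prime needs many short vectors, i.e.\ control of $\lambda_4$. The same objection applies to your derivation of $N_c\approx\sqrt p$.
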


\begin{algorithm}
    \caption{$\text{\ScalarSigningKLPT}_{\ell^e}(I,\torsion)$}
    \label{alg:scalar_klpt}
    \begin{algorithmic}[1]
        \Require $\torsion$ prime or power of prime of known factorization, $I$ 
        left ideal of a maximal order $O$ in $B_{p,\infty}$ such that $
        \normklpt{I}\mod\torsion$ is a square, $\ell$ small prime and
        $e\in\Z$ even such that $\ell^e$ large enough
        (See \Cref{prop:scalarKLPT}).
        \Ensure $J\sim I$ such that $J=\chi_I(\beta)$ with $\beta\in(\Z+\torsion 
        O)\cap I$ and $\normklpt{J}\mid \ell^e$
        
        \State $\alpha,I_c' \gets \EquivalentPrimeIdeal(I_c)$; $I_1 \gets 
        \frac{1}{\alpha} I\alpha$;
        \mycomment{$I_c' = \chi_{I_c}(\alpha)$}
        
        \State 
        $\alpha_I,I_2\gets$\InitialRerandomization$(I_1,\Z+\torsion(\frac{1}{\alpha} 
        O\alpha),I_c')$ \mycomment{See \Cref{sec:klpt_rerandomization}}
        \State Let $\delta\in[I_c']^*I_2\cap (\Z+\torsion \O_0)$ random s.t. 
        $I_0 = \chi_{[I_c']^*I_2}(\delta)$ is of prime norm $N$
        \State $\beta\gets$\IdealSuborderEichlerNorm$_{\ell^e}(\torsion, I_c', 
        I_0)$
        \State $J_0 \gets \chi_{I_0}(\beta)$ and $J_a\gets[I_c']_*J_0$ and 
        $J\gets\alpha J_a \frac{1}{\alpha}$
    \State\Return $J$ 
\end{algorithmic}

\end{algorithm}

\subsection{Rerandomization to improve the output distribution}
\label{sec:klpt_rerandomization}

The difference between \GenericKLPT and \SigningKLPT is that the latter ensures 
that restricting the search space from $I$ to $I\cap(\Z+I_c)$ 
does not bias the output~\cite{AC:DKLPW20,Leroux}.
This is done by replacing the input ideal $I$ ---~before computing the pullback  
through $I_c$~--- by an equivalent ideal 
which is in a uniformly random equivalence class when taking equivalences in 
$(\Z+I_c)$~\cite[Lemma 5.3.1]{Leroux}. 
In the following, the set of ideal classes for a (non necessarily maximal) 
quaternion order $\O$ is denoted Cl$(\O)$, as in~\cite[Section 2.3]{Leroux}.

For rerandomization in \ScalarSigningKLPT, we adapt~\cite[Algorithm 34]{Leroux}, 
but we need to prove that the 
equivalence for $I$ is taken in the order $\O_{\torsion}$ translating the level structure constraint.
This means in $\Z+\torsion \O$ for the scalar case.  As shown in 
\Cref{lemma:klpt_randomization}, it is possible to modify the rerandomization in 
\SQIsign so that the equivalence is taken in $I\cap \O_{\torsion}$ as done in 
\Cref{alg:scalar_rerandomization}.
With this choice of equivalence,~\cite[Proposition 2.3.12]{Leroux} applies, 
so that the class in Cl($\O_0\cap \O$) of the output of this 
rerandomization only depends on the class of $I$ in Cl($\O_{\torsion}$). 
More details on this can be found in \Cref{sec:rerandomization_details}.

\begin{restatable}{lemma}{lemmaKLPTRandomization}
    \label{lemma:klpt_randomization}
    For $I$ a left ideal of a maximal order $\O$ and $\torsion,N_c,\normklpt{I}$ 
    coprime, there exist $\omega\in \Z+\torsion \O$ 
    such that $N_c$ is inert in $\Z[\omega]$ and $\gamma\in I\cap \Z+\torsion \O$ 
    such that $\gamma$ is not in $N_c\O_R(I)$.
\end{restatable}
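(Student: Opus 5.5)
The plan is to prove the two existence claims separately. In both, I reduce to the maximal order and then multiply by $\torsion$: this lands in $\Z+\torsion\O$, and because $\gcd(\torsion,N_c)=1$ the properties we care about at $N_c$ are preserved.

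\emph{Construction of $\omega$.} Since $N_c$ is a prime different from $p$, we have $\O\otimes\Z_{N_c}\cong \Mat_2(\Z_{N_c})$. Hence reduction gives a surjection $\O\to\O/N_c\O\cong\Mat_2(\F_{N_c})$. I take the companion matrix of an irreducible quadratic $x^2-ax+b\in\F_{N_c}[x]$ and lift it to some $\omega_0\in\O$. The characteristic polynomial of $\omega_0$ (reduced trace and reduced norm) then reduces to $x^2-ax+b$ modulo $N_c$. So the image of $\Z[\omega_0]$ in $\O/N_c\O$ is $\F_{N_c}[x]/(x^2-ax+b)\cong\F_{N_c^2}$, which is what "$N_c$ inert in $\Z[\omega_0]$" means.

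Now set $\omega:=\torsion\omega_0\in\torsion\O\subset\Z+\torsion\O$. Its characteristic polynomial is $x^2-\torsion a x+\torsion^2 b$, whose roots modulo $N_c$ are $\torsion$ times those of the previous polynomial. Since $\torsion$ is invertible mod $N_c$, it stays irreducible over $\F_{N_c}$. Moreover $[\Z[\omega_0]:\Z[\omega]]=\torsion$ is coprime to $N_c$, so the local rings at $N_c$ coincide and $N_c$ remains inert in $\Z[\omega]$.

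\emph{Construction of $\gamma$.} First I find $\alpha\in I$ with $\alpha\notin N_c\O_R(I)$. Suppose instead $I\subseteq N_c\O_R(I)$. Then $N_c^{-1}I\subseteq\O_R(I)$, so every element of $N_c^{-1}I$ has integral reduced norm. Taking the gcd of these norms gives $\normklpt{I}/N_c^2\in\Z$, contradicting $\gcd(N_c,\normklpt{I})=1$.

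Now put $\gamma:=\torsion\alpha$. It lies in $I$ because $I$ is a left $\O$-ideal, and in $\torsion\O\subseteq\Z+\torsion\O$ because $I\subseteq\O$ ($I$ is integral). Hence $\gamma\in I\cap(\Z+\torsion\O)$. If $\gamma\in N_c\O_R(I)$, then since $\torsion$ is a unit in $\O_R(I)/N_c\O_R(I)$ we would get $\alpha\in N_c\O_R(I)$, which is a contradiction.

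The main point needing care is the norm argument showing $I\not\subseteq N_c\O_R(I)$. I would make it rigorous using the definition of $\Norm$ as a gcd from \Cref{thm:Deuring_with_level_structure}, or via indices: $[\O_R(I):N_c\O_R(I)]=N_c^4$ cannot divide $[\O_R(I):I]$, which is a power of $\normklpt{I}$.
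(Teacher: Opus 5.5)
Your proof is correct and follows the paper's route: in both parts you take an element of the maximal order (resp.\ of $I$) that already has the required property at $N_c$, multiply it by $\torsion$ to land in $\Z+\torsion\O$, and use $\gcd(\torsion,N_c)=1$ to show the property survives. The differences are cosmetic. The paper cites Leroux's Lemma 5.3.1 for $\omega_0$, whereas you construct it by lifting a companion matrix from $\O/N_c\O\cong\mathrm{M}_2(\mathbb{F}_{N_c})$. For $\gamma$, the paper picks $\gamma_0\in I$ of norm prime to $N_c$, so that $\mathsf{n}(\torsion\gamma_0)=\torsion^2\mathsf{n}(\gamma_0)$ directly excludes $\torsion\gamma_0\in N_c\O_R(I)$; this is a slightly shorter form of your contradiction-plus-unit argument, which implicitly also uses $I\subseteq\O_R(I)$.
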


\begin{algorithm}[H]
    \caption{$\text{\InitialRerandomization}(I,\O_{\torsion},I_c)$}
    \label{alg:scalar_rerandomization}
    \begin{algorithmic}[1]
        \Require $I$ ideal of a maximal order $\O$ in $B_{p,\infty}$, 
        $\O_{\torsion}$ suborder of $\O=\O_L(I)$ of level $\torsion$ and 
        superorder of $\Z+\torsion \O$, $\torsion$ prime or power of prime of 
        known factorization
        \Ensure $I'\sim I$ such that $I'=\chi_{I}(\beta)$ with $\beta\in 
        \O_{\torsion}\cap I$ and the class of $I'$ uniformly random in 
        Cl$(\Z+I_c)$
        \State Get $\omega\in \O_{\torsion}$ s.t. $N_c$ is inert in 
        $\Z[\omega]$;
        \State Get $\gamma\in I\cap \O_{\torsion}$ s.t. $\gamma$ is not in 
        $N_c\O_R(I)$; \mycomment{We assume $N_c$ coprime to $\torsion\normklpt{I}$ as $N_c$ is a large prime}
        \State Let $D\randgets [0,N_c]$, let $C\gets1$; \textbf{if} $D= N$ \textbf{then} $C,D\gets0,1$;
        \State Let $\beta\gets (C+D\omega)\gamma$;
        \State\Return {$\beta,\chi_I(\beta)$};
    \end{algorithmic}
\end{algorithm}

\paragraph{Output distribution.}
As for \SQIsign~\cite{AC:DKLPW20,EC:DLLW23}, the above arguments should not be 
taken as a proof that the output distribution is indistinguishable from 
random ideals of required norm and equivalence class, 
but better proofs are not common for 1-dimensional SQIsign variants.
\SQIsign's authors mostly assume that the output distribution of 
\IdealEichlerNorm is computationally indistinguishable from random ideals 
satisfying the same constraints as its output\footnote{A small bias was 
    noticed in~\cite{EC:DLLW23}, and fixed with small changes to two 
subroutines, \IdealSuborderEichlerNorm already has these changes.}.
Despite these similarities, the arguments on the distribution of outputs of 
\SigningKLPT do not immediately generalize to \Cref{alg:scalar_klpt}.
This is because the algorithms \IdealSuborderEichlerNorm and \IdealEichlerNorm 
are despite all similarities distinct, so the security of 1-dimensional 
\SQItorsion relies again on an \textit{ad hoc} assumption on the output
distribution of \ScalarSigningKLPT.
We claim however that this new assumption is just as plausible 
as the assumption that \SigningKLPT is undistinguishable from uniformly random 
isogenies.

\subsection{Adapting from the scalar to other level structures}
\label{sec:klpt_others}

The above \ScalarSigningKLPT can be adapted to a norm equation solving 
algorithm with other level structures, as long as the corresponding orders 
contain $\Z+\torsion \O$.
Let $I$ the input ideal, a left ideal of a maximal order $\O$ in $B_{p,\infty}$, 
and $\O_{\torsion}$ order translating the level structure constraint, with 
$\Z+\torsion \O \subset \O_{\torsion} \subset \O$. 
One must take special care so that the rerandomized ideal has square norm 
modulo $\torsion$ (see \Cref{sec:squares}).
Then the only modification compared to \ScalarSigningKLPT is that 
$\InitialRerandomization$ is called with the order $\O_{\torsion}$ instead of 
$\Z+\torsion \O$.
This works because all later steps in \ScalarSigningKLPT take an ideal in the 
same class of Cl($\Z+\torsion \O$) as the output of the rerandomization.
Since $\O_{\torsion}$ contains $\Z+\torsion \O$,  ideals which are equivalent for  
$\Z+\torsion \O$ are also equivalent for $\O_{\torsion}$.
The output distribution of this adaptation only depends on the class of the $I$ 
in Cl($\O_{\torsion}$), again by~\cite[Lemma 2.3.12]{Leroux}, and depends on the 
one of \IdealSuborderEichlerNorm just as for \ScalarSigningKLPT.
The required output size is the same as for \ScalarSigningKLPT at the same level 
$\torsion$, given by \Cref{prop:scalarKLPT}.

\subsubsection{More efficient variant for Borel level structure.}

For Borel level structures,  there is an even more efficient approach, which 
allows shorter output sizes.
This \BorelSigningKLPT consists in exactly the same rerandomization (again with 
$\O_{\torsion}=\Z+I_{\torsion}$, and no constraints on the quadratic residuosity of the output norm)
 and input preparation, except that $I_0$ is just 
taken equal to $[I_c]^*I_1$, since \IdealEichlerNorm as stated in 
\cite[Algorithm 10]{Leroux} does not require a prime-norm input. Then, the call 
to \IdealSuborderEichlerNorm is replaced by a call to \IdealEichlerNorm with 
inputs $I_c'I_{\torsion}',I_0$, where $I_{\torsion}'$ is $I_{\torsion}$ 
conjugated by $\frac{\bar\alpha_c}{\normklpt{I_c}}$ (so the product is 
compatible).
The correctness of this algorithm can be seen from \Cref{lemma:borelKLPT}, which 
clarifies that searching an element in $\Z+I_{\torsion}'\cap \Z+I_c'$ is the 
same as searching it in $\Z+I_c'I_{\torsion}'$, which is the problem 
\IdealEichlerNorm solves.

\begin{restatable}{lemma}{lemmaBorelKLPT}
    \label{lemma:borelKLPT}
    For $I,J$ two primitive compatible ideals of $B_{p,\infty}$ (that is 
    $\O_R(I)=\O_L(J)$) of coprime norms, $(\Z+I)\cap(\Z+J)=(\Z+IJ)$.
\end{restatable}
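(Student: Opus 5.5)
The plan is to prove the two inclusions separately, with a local–global argument for the hard one. Set $\O_1=\O_L(I)$, $\O=\O_R(I)=\O_L(J)$, $\O_2=\O_R(J)$, $n=\normklpt{I}$, $m=\normklpt{J}$, with $\gcd(n,m)=1$.

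\emph{The easy inclusion.} We have $IJ\subset I\O=I$, since $I$ is a right $\O$-ideal. Since $I\subset\O$, we also have $IJ\subset\O J=J$. Hence $\Z+IJ\subset(\Z+I)\cap(\Z+J)$.

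\emph{The reverse inclusion.} Both sides are full-rank lattices, so it is enough to prove equality after tensoring with $\Z_\ell$ for every prime $\ell$. There are three cases.
\begin{itemize}
\item If $\ell\nmid nm$, then $I_\ell=\O_{1,\ell}=\O_\ell$ and $J_\ell=\O_\ell=\O_{2,\ell}$. Thus $(IJ)_\ell=\O_\ell$, and both sides equal $\O_\ell$.
\item If $\ell\mid n$, then $\ell\nmid m$, so $J_\ell=\O_\ell$. The left side becomes $(\Z_\ell+I_\ell)\cap\O_\ell=\Z_\ell+I_\ell$, because $I_\ell\subset\O_\ell$ and $\Z_\ell\subset\O_\ell$. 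The right side is $\Z_\ell+I_\ell\O_\ell=\Z_\ell+I_\ell$, so they agree.
\item If $\ell\mid m$, then $I_\ell=\O_\ell$ and $\O_{1,\ell}=\O_\ell$. The left side becomes $\O_\ell\cap(\Z_\ell+J_\ell)=\Z_\ell+J_\ell$, since $J_\ell\subset\O_\ell$. The right side is $\Z_\ell+\O_\ell J_\ell=\Z_\ell+J_\ell$.
\end{itemize}
Equality at every prime gives equality of lattices, which completes the proof.

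\emph{The main obstacle.} The delicate point is justifying $I_\ell=\O_\ell$ when $\ell\nmid n$. I would use that an integral ideal of a maximal order is locally principal, say $I_\ell=\O_{1,\ell}\,x$ with $\normklpt{x}$ an $\ell$-adic unit. Then $x$ is a unit of $\O_{1,\ell}$, so $I_\ell=\O_{1,\ell}$. Since $\O_R(I)_\ell=x^{-1}\O_{1,\ell}x$, this also gives $\O_\ell=\O_{1,\ell}$. The same argument applies to $J$.

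Primitivity is not needed for the local computation itself. It only guarantees that the norms are the reduced norms entering the coprimality hypothesis, so no scalar factor $\ell$ is hidden in $I$ or $J$. Coprimality is used exactly once: to ensure that at each prime at most one of $I_\ell$, $J_\ell$ is a proper ideal.
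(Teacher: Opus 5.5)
Your proof is correct, but it takes a different route from the paper's.

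The paper argues globally. It uses the Eichler-order identity $\Z+K=\O_L(K)\cap\O_R(K)$ for a primitive ideal $K$ of a maximal order, applies it to $I$, $J$ and $IJ$, and uses $\O_L(IJ)=\O_L(I)$ and $\O_R(IJ)=\O_R(J)$. The reverse inclusion then amounts to dropping the two middle orders $\O_R(I)=\O_L(J)$ from the fourfold intersection $\O_L(I)\cap\O_R(I)\cap\O_L(J)\cap\O_R(J)$. This is very short, but it relies on the cited identity. It also silently needs $IJ$ to be primitive, which is where coprimality of the norms enters, implicitly, in the paper.

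Your argument checks equality prime by prime. It uses only local principality of integral ideals of maximal orders and the local--global principle for lattices, so it is self-contained. It also makes the role of coprimality transparent: at each $\ell$, at most one of $I_\ell$, $J_\ell$ is a proper ideal.

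Your argument in fact proves more than stated, because primitivity is never used. The identity therefore holds for arbitrary integral ideals with coprime norms; for instance, taking $J=N\O$ recovers \Cref{lemma:scalarKLPT}. For the same reason, your closing remark on primitivity is slightly off. The reduced norm already detects any hidden scalar factor, so primitivity is simply superfluous in your proof, not something needed to interpret the coprimality hypothesis.
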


\BorelSigningKLPT has a shorter output size and its output distribution is more 
directly linked to \SigningKLPT.
The output size is due to \IdealEichlerNorm which produces outputs of size 
$p^{3}(\torsion N_c)^3$ if the Eichler order $\Z+I_c' I_{\torsion}$ is used.
The output distribution is similar to the one of \SigningKLPT since 
\IdealEichlerNorm is used in both. The only difference between these 
distributions is that rerandomization in \BorelSigningKLPT is only done up to 
equivalence with respect to $(\Z+I_{\torsion}')$.

\begin{proposition}\label{prop:borelKLPT}
    There is a variant of \BorelSigningKLPT which produces output for 
    $\ell^e>p^{9/2}\torsion^3\epsilon$ with $\epsilon$ polylogarithmic in 
    $(p\torsion)$, under plausible assumptions.
\end{proposition}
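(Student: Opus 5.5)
The bound comes from tracking the size of the norm equation that \IdealEichlerNorm must solve inside \BorelSigningKLPT, and then adjusting the parameters as Leroux does. First I fix the sizes of all inputs to the call \IdealEichlerNorm$(I_c'I_\torsion', I_0)$.
\begin{itemize}
\item \EquivalentPrimeIdeal returns $I_c'$ with prime norm $N_c\approx\sqrt p$, up to polylogarithmic factors, by Minkowski's bound on the reduced basis.
\item $I_\torsion'$ has norm $\torsion$.
\item So the Eichler order $\Z+I_c'I_\torsion'$ has level $N_c\torsion\approx\sqrt p\,\torsion$. Its intersection with $I_0$ is handled by \Cref{lemma:borelKLPT}, since $N_c$ and $\torsion$ are coprime.
\item $I_0=[I_c']^*I_1$ has norm $\normklpt{I}$. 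The rerandomization (\Cref{alg:scalar_rerandomization} with $\O_\torsion=\Z+I_\torsion$) multiplies it by at most $N_c$. Together with an equivalent-prime step, its norm $N$ can be taken to be $\approx\sqrt p$ up to polylog factors.
\end{itemize}

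Next I recall the structure of \IdealEichlerNorm from \cite[Algorithm 10]{Leroux}. It first calls \FullRepresentInteger to get $\gamma\in\O_0$ with $\normklpt{\gamma}=N\ell^{e_0}$ and $\normklpt{\gamma}>pN\cdot(\text{level})$. It then solves a linear system modulo $N\cdot(\text{level})$ for $\mu=j(C+D\omega)\in\O_0$. Finally it runs \FullStrongApproximation modulo $N\cdot(\text{level})$ to lift $\mu$ to an element of norm $\ell^{e_1}$. The standard size analysis behaves as follows.
\begin{itemize}
\item Strong approximation needs $\ell^{e_1}\gtrsim p\,(N\cdot\text{level})^{3}$ up to polylog factors.
\item This is exactly the stated estimate $p^3(\torsion N_c)^3$, taking $N\approx\sqrt p$.
\end{itemize}

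Plugging in $N_c\approx\sqrt p$ and $N\approx\sqrt p$, a naive count gives $\ell^{e}=\ell^{e_0+e_1}\approx p\cdot p^{3}\torsion^3\cdot pN\cdot\sqrt p\,\torsion$. This exceeds the target $p^{9/2}\torsion^3$, and bringing it down is the main obstacle. The variant I would use saves the extra factors in three ways.
\begin{itemize}
\item Do not include $N$ in the strong-approximation modulus. Work with $\gamma$ whose norm is $N$ times a power of $\ell$, so the $N$-part is absorbed by $\gamma$. This is the trick already used in \SigningKLPT.
\item Allow $\normklpt{\gamma}\approx p\,N_c\torsion$ rather than including $N$.
\item Solve the linear system only modulo $N_c\torsion$, giving $\ell^{e_1}\approx p\,(N_c\torsion)^3\approx p^{5/2}\torsion^3$.
\end{itemize}
The total exponent is then $p^{5/2}\torsion^3$ from $e_1$ times $\approx p^{2}$ from $e_0$, i.e.\ about $p^{9/2}\torsion^3$, with all log factors collected in $\epsilon$.

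The heuristic "plausible assumptions" are those of \cite{AC:DKLPW20,Leroux}:
\begin{itemize}
\item the relevant values behave like random integers, so that primality and representability by sums of two squares occur with the expected density;
\item the linear system modulo $N_c\torsion$ has a solution with nonzero $C,D$ with constant probability.
\end{itemize}
Under these assumptions the expected number of trials is polylogarithmic, which justifies $\epsilon$. The step I expect to be hardest is the careful bookkeeping that yields the $N_c\torsion$ modulus with $N_c\approx\sqrt p$, rather than including $N$ as well.
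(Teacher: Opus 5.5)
Your target exponent is correct, but the derivation has a genuine flaw. Write $N_L\approx\sqrt p$ for the prime norm of the ideal $L$ that \EquivalentPrimeIdeal produces inside \IdealEichlerNorm (your $N$).

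\emph{The proposed variant fails.} You drop $N_L$ from the strong-approximation modulus, but the requirement $\gamma\mu\in L$ is a condition on $\mu$ modulo $N_L\O_0$. In $\O_0/N_L\O_0\cong M_2(\Z/N_L\Z)$ it says that $\mu$ must send the line cut out by $L$ into $\ker(\gamma \bmod N_L)$. Choosing $N_L\mid\normklpt{\gamma}$ only makes this kernel a line, so that some $\mu$ of norm prime to $N_L$ can satisfy the condition; this is what fixes $(C_0:D_0)$. It does not make $\gamma\mu\in L$ automatic. The constraint survives only because strong approximation lifts $\mu_0$ modulo the ideal norm times the Eichler level. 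That is why \SigningKLPT strong-approximates modulo $NN_\tau$, and \IdealEichlerNorm modulo the product of the ideal norm and the Eichler level. There is no trick in \SigningKLPT that absorbs the ideal norm into $\gamma$. With modulus $N_c\torsion$ alone, the output $\beta=\gamma\mu$ lies outside the input ideal except with probability about $1/N_L$.

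\emph{The final tally is also inconsistent.} With $\normklpt{\gamma}\approx pN_c\torsion$ and $N_L\approx\sqrt p$ you get $\ell^{e_0}\approx p\torsion$, not $p^2$. So your variant would not give $p^{9/2}\torsion^3$ even if it were valid.

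\emph{No saving is needed.} The paper's justification is the standard count for \IdealEichlerNorm.
\begin{itemize}
\item \FullRepresentInteger only needs $\normklpt{\gamma}=N_L\ell^{e_0}\gtrsim p$, so $\ell^{e_0}\approx\sqrt p$. Your extra requirement $\normklpt{\gamma}>pN_L\cdot(\text{level})$ is what made your ``naive count'' overshoot.
\item Strong approximation modulo $N_L\cdot N_c\torsion$ needs $\ell^{e_1}\approx p(N_LN_c\torsion)^3\approx p^4\torsion^3$.
\end{itemize}
Hence $\ell^e\approx p^3(N_c\torsion)^3$. This is the output size the paper states for \IdealEichlerNorm with the Eichler order $\Z+I_c'I_\torsion'$, whose level is $N_c\torsion$ by \Cref{lemma:borelKLPT}. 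With $N_c\approx\sqrt p$ (switching special extremal order if necessary, as in the proof of \Cref{prop:scalarKLPT}), this gives $p^{9/2}\torsion^3$ directly. Note also that $p(N_L\cdot\text{level})^3$ is not ``exactly'' $p^3(\torsion N_c)^3$: the missing $\sqrt p$ is the $\ell^{e_0}$ factor.
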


\subsubsection{Proof-of-concept implementation.}

We implemented the KLPT variants for scalar and the optimized version for Borel 
level structures\footnote{\klpturl} based on the LearningToSQI~\cite{LearningToSQI} 
SageMath~\cite{sage} implementation of \SQIsign. It is a proof-of-concept 
implementation and has severe limitations.
In particular, it is quite slow. In addition, we did not implement the 
rerandomization from \Cref{alg:scalar_rerandomization} for Borel level 
structures, which means the output distribution is biased in that case.
Finally, we only support $\torsion$ to be a power of an odd prime, and $\ell$ to 
be 2, and the special order to be $\O_0$ (this means that the implementation 
does not adjust the order in case of larger lattice minima, but fails instead).

\subsubsection{Instantiation of \SQItorsion with 1-dimensional isogenies}

Thanks to the previous algorithms we can finally instantiate the 
\SQItorsion
signature scheme using a 1-dimensional representation of 
the response isogeny.
For efficiency, we use a Borel level structure. First, we need a prime $p$ so that ideals of norm $\ell^e$ 
can be translated to isogenies efficiently and 
the \torsion-torsion is accessible\footnote{that means 
the \torsion-torsion is defined over $\mathbb{F}_{p^4}$} 
for some prime (or prime-power) $N$ of bitsize $\secpar$.
Therefore we consider $\ell = 2$ and $p = f 2^e - 1$ such that $p-1$ has a prime factor 
$\torsion$ roughly of size $\secpar$. 
For translating ideals of norm a power of  $\ell$ we can then use the algorithms from~\cite[Appendix A]{EC:Leroux25} or 
those from~\cite{AC:OnuNak24}.\footnote{Both of these algorithms rely on higher 
dimensional isogenies to avoid requirements on the smoothness of $p+1$, but they 
are meant to be used only in the signing phase, without modification of 
the 1-dimensional verification procedure. They don't affect the simulatability of the transcripts.}
A simple search targeting NIST I security gives $p = 369 \cdot 2^{252} - 1$, where $p-1$ has a factor 
$\level$ of size $126$ bits.
With these choices, using \BorelSigningKLPT we obtain a response isogeny
of degree $2^e$ for $e = 9/2 \log(p) + 3\log(\torsion) + \log(\log(p\torsion)) 
\approx 1920$.
This gives a public key size of $186$ bytes, composed by \publicCurve,\Ppk, 
\Qpk\footnote{we can define the points over the twist to have the represented by 
\Fpsq coefficients}
and a signature size of $300$ bytes, composed by $\gamma\in\Gamma$, \chMatrix 
(compressed to $2\log(\torsion)$ bits) and \respIsogeny
(compressed to $e$ bits using techniques 
from~\cite{AC:DKLPW20,EC:CEMR24}).

\section{A state-of-the-art \SQInstructor signature} \label{sec:hd}

In this section we instantiate the \SQInstructor signature protocol
using the same state-of-the-art techniques
in~\cite{NISTPQC-ADD-R2:SQIsign24}, in particular 2-dimensional
isogeny representations.  %
We carefully analyze the Zero-Knowledge property of the identification
scheme and prove \EUFCMA of the signature scheme.  %
Finally, we implement our signature by forking the reference \SQIsign
implementation available at
\url{https://github.com/SQISign/the-sqisign} and benchmark it,
demonstrating comparable performance.%
\footnote{To gain even more performance, we could use the recent
  technique Qlapoti~\cite{qlapoti}, which applies equally well to
  \SQIsign and \SQInstructor. %
  We refrain from doing so to make the comparison with \SQIsign's
  reference code simpler.}

\subsection{The protocol in detail}

\paragraph{Parameters.}
We take a prime $p = f 2^e - 1$ such that $f$ is a small odd cofactor,
and a starting curve $E_0:y^2=x^3+x \;/\F_{p^2}$. %
Any of the primes specified in~\cite{NISTPQC-ADD-R2:SQIsign24}
works. %
We take the level to be $\torsion = 2^{\etorsion}$ for some
$\etorsion \le e$, owing to the fact that $E_0[2^e]$ is
$\F_{p^2}$-rational. %
To simplify the exposition we focus on $Z$-level structures, where
$Z = \{ \smt \lambda 0 0 \lambda \mid \lambda \text{ odd} \} \subset
\GLtwo$ is the subgroup of scalar matrices, thus, following the
analysis of \Cref{sec:soundness_general}, we take
$\etorsion \approx \frac{\secpar}{3} \approx \frac{e}{6}$.

\paragraph{Representing level structures.}
For compactness, we represent level structures on a curve $E$ as
matrices in $\GLtwo/Z$, with respect to a fixed public basis
$(P_E,Q_E)$ of $E[\level]$. %
Such bases can be computed
using~\cite[Algorithms~2.1-3]{NISTPQC-ADD-R2:SQIsign24} and then
multiplying by $2^{e-\etorsion}$.

Letting $\O_0$ be the maximal order with basis
$(1, i, (i+j)/2, (1+k)/2)$, isomorphic to $\OrderEnd(E_0)$ via
$i \mapsto \iota$ and $j \mapsto \pi$, we precompute the matrices
$(M_0^{(1)},\ldots,M_0^{(4)})$ of the action of the order's basis on
$(P_{E_0},Q_{E_0})$ as described in \Cref{rmk:basis_action}.

\paragraph{Key Generation and commitment}
are similar to
\SQIsign~\cite[Algorithm~4.1]{NISTPQC-ADD-R2:SQIsign24}: we generate
secret ideals $\secretIdeal,\commitIdeal\subset\O_0$ of prime norm
$\approx p^2$ and we evaluate the corresponding isogenies
$\secretIsogeny,\commitIsogeny$ to obtain the public key / commitment
curves $\publicCurve,\commitCurve$. %
The level structure $S_\pk$ is then simply
$Z\cdot(P_{\pk},Q_{\pk})$, with $(P_{\pk},Q_{\pk}) = 
(P_{\publicCurve},Q_{\publicCurve})$ the fixed level 
structure associated to $\publicCurve$. %
Additionally we compute and store the action matrices $M_\sk,M_\cmt'$
associated to $\secretIsogeny,\commitIsogeny$ with respect to
$(P_{E_0},Q_{E_0})$, $(P_{\pk},Q_{\pk})$ and
$(P_\com', Q_\com') = (P_{\commitCurve},Q_{\commitCurve})$. %

\paragraph{The challenge} is expressed as a matrix
$\chMatrix\in\GLtwo/Z$ with respect to the public basis
$(P_\com', Q_\com')$, i.e.
\begin{equation}
  \label{eq:com_basis}
  S_\cmt = Z\cdot (P_\cmt,Q_\cmt) := Z \chMatrix \cdot(P_\com', Q_\com')\ .
\end{equation}
In the non-interactive signature, $\chMatrix$ is computed as
$\mathsf{H}(j(\publicCurve),j(\commitCurve),\msg)$, where
$\mathsf{H}:\F_{p^2}\times\F_{p^2}\times\{0,1\}^*\to\GLtwo/Z$ is a
preimage-resistant hash function.

\paragraph{The response}
is then computed follows. %
We start by calling Algorithm $\ResponseIdeal$ on $\secretIdeal$ and
$\commitIdeal$ to obtain an ideal $I_\rho\subset\OrderEnd(E_\pk)$ such
that $\rho(P_\pk)=P_\cmt$ and $\rho(Q_\pk)=Q_\cmt$. %
Note that $M_\cmt = M_\cmt' \chMatrix^{-1}$ at Line~\ref{alg:ri:m_com}.

Then, writing $\bbE_\pk = (\publicCurve, S_\pk)$ and
$\bbE_\com = (\commitCurve, S_\com)$, we let
$I_{\rho, \curvelevel_\com} = I_\rho \cap \OrderEnd(\bbE_\pk)$. %
Next, we randomize $I_{\rho, \curvelevel_\com}$ by taking a random
equivalent $\OrderEnd(\bbE_\pk)$-ideal of bounded norm. %
For this, we take a uniformly random
$\alpha\in I_{\rho, \curvelevel_\com}$ of odd norm
$\norm(\alpha) < \degreeBound\cdot\norm(I_{\rho, \curvelevel_\com})$. %
An algorithm to sample $\alpha$ is described
in~\cite[Algorithm~3.7]{NISTPQC-ADD-R2:SQIsign24}. %
If no such $\alpha$ is found, we restart the signing procedure by
sampling a new commitment. %
The parameter $\degreeBound$ and the restart probability will be
discussed in \Cref{sec:lattice_shapes}.

Then, we set $I_\alpha := \chi_{I_{\rho}}(\alpha)$ (see \Cref{eq:Xi})
and we compute a response isogeny $\respIsogeny$ such that
$I_{\respIsogeny} = I_\alpha$. %
By \Cref{eq:Chi_change_matrix}, the matrix $M_{\respIsogeny}$ wrt $(P_\pk,Q_\pk)$ and
$(P_\cmt,Q_\cmt)$ is $\frac{1}{\deg \rho}\tomatrix{\alpha}{\level}^{\adjoint} = \gamma\Id$, where
$\gamma=\frac{1}{\norm(I_\rho)} \cdot \frac{\tr(\alpha)}{2}$.
Finally we construct a 2-dimensional isogeny representation of
$\respIsogeny$, like
in~\cite[Section~4.4.3]{NISTPQC-ADD-R2:SQIsign24}. %
For this, we construct an auxiliary isogeny
$\auxIsogeny:\commitCurve\to\auxCurve$ such that
$\deg(\respIsogeny) + \deg(\auxIsogeny) = 2^a < 2^e$ and we construct
the Kani diamond on $\respIsogeny$ and $\auxIsogeny$, as in
\Cref{fig:hd_response_diagram}, defining a $(2^a,2^a)$-isogeny
$\Phi:\publicCurve\times \auxCurve \to \commitCurve \times \auxCurve'$. %

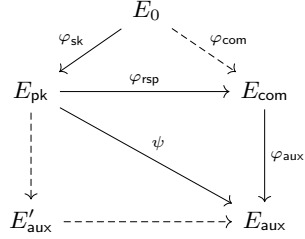
\begin{figure}
    \[
    \begin{tikzcd}
    & E_0 \arrow[ld, "\secretIsogeny"'] \arrow[rd, "\commitIsogeny", dashed] &  \\
    \publicCurve \arrow[rr, "\respIsogeny"] \arrow[dd, dashed] \arrow[rrdd, "\psi"] &                                                                 & \commitCurve \arrow[dd, "\auxIsogeny"] \\
    & & \\
    \auxCurve' \arrow[rr, dashed] & & \auxCurve                             
    \end{tikzcd}
    \] 
    \caption{Protocol diagram for the HD version of the protocol.}
    \label{fig:hd_response_diagram}
\end{figure}

The signature then contains the following data:
\begin{itemize}
\item The degree  $\deg(\respIsogeny)$ of the response,
\item The auxiliary curve $\auxCurve$,
\item The kernel of the $(2^a,2^a)$-isogeny $\Phi$, encoded with
  respect to public bases of $\publicCurve[2^e]$ and $\auxCurve[2^e]$,
\item The challenge matrix $\chMatrix$,
\item Optionally: the scalar $\gamma = \frac{\tr(\alpha)}{2 \, \norm(I_\rho)}$.
\end{itemize}

\paragraph{Verification} recomputes the $(2^a,2^a)$-isogeny $\Phi$
from its kernel and evaluates it to recover $\commitCurve$ and $\respIsogeny$, checks that
$\chMatrix = \mathsf{H}(j(\publicCurve),j(\commitCurve),\msg)$ and
that
\[
  \varphi_\rsp\begin{pmatrix}P_{\publicCurve}\\Q_{\publicCurve}\end{pmatrix} =
  \gamma\chMatrix\begin{pmatrix}P_{\commitCurve}\\Q_{\commitCurve}\end{pmatrix}\ .
\]
If $\gamma$ is not included in the signature, the verifier can
recompute it via a generalized discrete-logarithm.

\subsection{Lattice shapes and response degree} 
\label{sec:lattice_shapes}

The response computation described above may fail and restart the
signature when no $\alpha\in I_{\rho, \curvelevel_\com}$ of odd norm
$\norm(\alpha) < \degreeBound\cdot\norm(I_{\rho, \curvelevel_\com})$ 
is found. %
Though we set this failure probability to be so low as to be
practically irrelevant, it is not practically negligible and it thus
affects the distribution of responses and consequently the
Zero-Knowledge property. %
We note that \SQIsign also has failure cases that force it to restart,
as analyzed in~\cite{NISTPQC-ADD-R2:SQIsign24,qlapoti}, and those
failure cases also affect \SQInstructor, however this failure event is
specific to the design described here, thus we analyze it exclusively.

Because $I_{\rho, \curvelevel_\com}$ is isometric to
$\Hom(\curvelevel_\pk,\curvelevel_\com)$ up to a multiplicative factor $\deg \rho$, the probability of failing to find
$\alpha$ is the same as the probability of failing to find an isogeny
of odd degree $< \degreeBound$ in $\Hom(\curvelevel_\pk,\curvelevel_\cmt)$. %
Let $(\alpha_1,\alpha_2,\alpha_3,\alpha_4)$ be the Minkowski-reduced basis of
$\Hom(\curvelevel_\pk,\curvelevel_\com)$ and  $(\lambda_1,\lambda_2, \lambda_3, 
\lambda_4)$ be the norms of the $\alpha_i$'s, i.e. the squares of the successive minima\footnote{recall that we are using the norm specified 
in \Cref{eq:scalar_product_norm_length}.}
It is not possible that all the minima are even, otherwise all 
the elements of the lattice would be even, contradicting 
\Cref{lem:surjectivity_torsion}.
Thus, the probability of not finding an odd norm vector in the intersection with 
the ball of radius \degreeBound is lower-bounded by $\Pr(\lambda_4 > 
\degreeBound)$.
We estimate this probability via a quantitative analysis of the distribution of 
the minima, for which we provide experiments\footnote{\dataurl}.
We also refer to~\cite[Section~9.3.2]{NISTPQC-ADD-R2:SQIsign24} for a similar 
analysis.
Recall that:
\begin{enumerate}
\item \ComputeSolutions returns a left $\OrderEnd$-ideal $I_\rho$
  where $\OrderEnd =  \OrderEnd(\publicCurve)$. %
\item
  $I_{\rho, \curvelevel_\com} = I_\rho \cap (\Z + \torsion \OrderEnd)$. 
\item $\OrderEnd$ is statistically close to the uniform distribution
  over the set of maximal orders of $B_{p,\infty}$
  (\Cref{tab:deuring}), by the choice of $\deg(\secretIsogeny)$.
\item if $C$ is
  $\covolume(\Hom(\curvelevel_\pk,\curvelevel_\cmt)) =
  [M_2(\Zmod):\spanned{\Gamma}] \cdot \tfrac 14 p$, for scalar level
  structures $C = \tfrac p4 N^3$, then by Minkowski's second theorem we have 
	\\
  $ \frac{1}{55} C^2 \le \frac{2^4}{9\pi^4} C^2 \leq \prod_{i=1}^4 \lambda_i < \frac{2^{10}}{\pi^4} C^2 < 11 C^2$.
\end{enumerate}
In our experiments, we observe an inverse linear correlation between the first 
and the last minima, that means we can approximate 
$\lambda_1 \lambda_4 \approx 
k_1 C$
for a constant $k_1$. Our experiments suggest $k_1 \approx 0.314$.
We thus focus on the cumulative distribution of  $\lambda_1$,
\ie $\Pr(\lambda_1 \leq x)$,
that is the probability that the shortest vector of $\Hom(\curvelevel,\curvelevel')$ has norm at most $x$.

Following the same estimates as in~\cite{NISTPQC-ADD-R2:SQIsign24}, we know that there are 
$\psi(x) = x \prod_{l|x} (1 + \tfrac 1l)$ primitive left $\OrderEnd(\curvelevel)$-ideals 
(i.e. cyclic isogenies mapping curves with level structure)
of norm $x$, that means that the number of primitive ideals of odd norm at most $x$ $\sum_{2r+1 \leq x} \psi(2r+1)$, which asymptotically lies between $ \frac 14 x^2$ and $x^2$.
To count also minimal isogenies $\varphi \colon \bbE_\pk \to \bbE'$  of even degree, (here minimal just means that it is not a multiple of another isogeny $ \bbE_\com \to \bbE'$) we notice that such maps, since they map scalar structures to multiples of scalar structures, at the level of elliptic curves (without level structures) are of the form $\varphi = 2^\delta \varphi'$ with $\delta \le \etorsion$ and $\varphi'$ a cyclic, odd-degree isogeny. Moreover if we choose $\varphi'$, then $E'$ is determined and the level structure is determined up to $2^{3\delta}$ choices (or $\tfrac{3}4 2^{3\delta}$ if $\delta = e_\tor$). Then, fixing $\deg \varphi = 2^{2\delta} \deg \varphi' = 2^{3\delta} (2r+1)$, then the number of such $\phi$ is (up to a small constant in the case $\delta = e_\tor$) $2^{3\delta} \psi(2r+1)$. 
Summing over possible $\delta$ and $2r+1$ such that $4^\delta (2r+1) \le x$, we see that the possible minimal isogenies $\bbE_\com \to \bbE'$ of degree at most $x$ is equal to $\alpha x^2$ for some $\alpha$ bounded from below by $\tfrac 14$ and also bounded from above by some constant (in particular $\alpha \le 2$ if $x/N^2$ is large enough).

The total number of ideal classes, or equivalently the number of possible choices for $\bbE' = \bbE_\com$
up to isomorphism, is approximately $\tfrac{p}{12} N^3 = \tfrac C3$ for 
scalar level structures.
Thus, if all ideal classes are equally likely and it never happens that two 
ideals of norm at most $x$ are in the same class, for $x\gg N^2$ we have
\begin{equation}
    \label{eq:lambda1_approx}
    \Pr \left( \lambda_1 \leq x \right) = k_2 \left(\frac{\sqrt C}{x}\right)^2 %
     \ , \text{ for } k_2  \in \left[ \frac 32, 6\right] \ .
\end{equation}
Thus, combining the last estimate with our correlation hypothesis, we have that
\begin{equation}
    \label{eq:lambda4_approx}
    \Pr\left(\lambda_4 > x\right) = \Pr\left(\lambda_1 \leq \frac{k_1 
    C}{x}\right)
    \approx k_3 \left(\frac{\sqrt C}{x}\right)^2 \text{ for } k_3 = k_1^2 k_2
    \ .
\end{equation}
Our experiments corroborate this estimate, for $k_3 \approx 0.553794$. Thus, we 
can conclude that via fixing $\degreeBound = \kappa \sqrt C = \kappa \sqrt p 
\torsion^{3/2}$ we can bound the restart probability by $\tfrac{k_3}{\kappa^2}$. 
Thus, we have three possible strategies. The aggressive one is to fix $\kappa$ 
to a small $\bigO(1)$ constant, having a restart probability of $\bigO(1)$, 
getting a more efficient signature. The conservative one is to fix $\kappa = 
\sqrt[4] p$, getting a negligible restart probability in $\secpar$.
The intermediate one is to aim to a restart probability of $2^{-64}$, in line with~\cite{NISTPQC-ADD-R2:SQIsign24} and
NIST's recommendations that the adversary has access to no more 
than $2^{64}$ signing queries~\cite{NIST-call}, thus we fix $\kappa = 2^{32}$, 
implying
    \begin{equation}
        \label{eq:degree_bound_intermediate}
        \degreeBound = 2^{32} \sqrt p \torsion^{3/2} \ ,
    \end{equation}
that implies, recalling the definition of \torsion, that $\log(\degreeBound) = 
32 + \tfrac 12 (\log(p) +  \secpar)$.

\subsection{\EUFCMA Security}
\label{sec:hd_zk}

We are finally ready to discuss the \EUFCMA security of our \SQInstructor signature.
From now we denote by $\Prot_\SQItorsion$ the identification protocol from 
\Cref{sec:sqisign_protocol_new}.
As noted repeatedly, 
see~\cite{EC:DLRW24,AC:BDDLMP24,NISTPQC-ADD-R2:SQIsign24,PKC:BBCCIL25}, we cannot sample 
efficient representations of  large degree isogenies without relying on the 
knowledge of the endomorphism ring of the starting curve.
This means that the straightforward simulator for the \SQIsign protocol 
described above cannot be instantiated in polynomial time.

To overcome this problem in~\cite{EC:DLRW24}, and following works on \SQIsign 
variants~\cite{AC:BDDLMP24,AC:DupFou24}, the authors introduced the notion of 
\emph{isogeny oracles}: oracles that on input a supersingular elliptic curve and 
some parameters, return an efficient representation of a large degree isogeny 
from the curve following a given distribution.
\begin{remark}
    In a follow-up the authors of~\cite{C:ABDPW25} provide a different 
    security proof for the \SQIsign signature scheme, avoiding the use of
    isogeny oracles, but instead introduce the
    notion of \emph{hint-assisted} zero knowledge, where the simulator is given 
    hints sampled from an isogeny distribution. A similar proof can be adapted to
    our protocol, but is very technical and would take all our available space.
    We focus here on the more intuitive, yet rigorous proof based on oracles,
    and give the hint-assisted proof in \Cref{sec:proofs_sqisign_protocol_new} for completeness.
\end{remark}

As for~\cite{AC:BDDLMP24}, we consider two variants of the oracle:
\begin{definition}
    \label{def:uto}
    A $\Gamma$-\emph{uniform target oracle} (\UTO) 
    for $\Gamma \leq \GLtwo$ is an oracle taking 
    as input a supersingular elliptic curve with $\Gamma$-level structure $\bbE$ defined 
    over $\Fpsq$ and an integer $\degreeBound$
    and outputs a random isogeny $\phi : \bbE \to \bbE'$
    (in efficient representation) such that:
    \begin{enumerate}
        \item $\bbE'$ is distributed sampled from the uniform distribution on 
            the set of supersingular elliptic curves over \Fpsq with level 
            structure so that there exists an odd-norm isogeny in 
            $\Hom(\bbE,\bbE')$ of degree $\leq \degreeBound$;
        \item The conditional distribution of $\varphi$ given $\bbE'$ is uniform 
            over isogenies $\varphi : \bbE \to \bbE'$ of odd degree $\leq\degreeBound$
    \end{enumerate}
\end{definition}

\begin{definition}[Definition 23~\cite{AC:BDDLMP24}]
    \label{def:fidio}
    A \emph{fixed degree isogeny oracle} (\FIDIO) is an oracle taking as input a 
    supersingular elliptic curve $E$ defined over $\Fpsq $ and an integer $N$, 
    and outputs a uniformly random isogeny $\phi : E \to E'$ (in efficient 
    representation) with domain $E$ and degree $N$.
\end{definition}

Using these oracles, we can enunciate and prove the following theorem:
\begin{theorem}
    \label{thm:sqisign_hd_sec_oracles}
    The $\Prot_\SQItorsion$ identification protocol is statistically 
    \emph{non-aborting} honest-verifier zero-knowledge in the \UTO and \FIDIO model.
    In other words, there exists a polynomial time simulator $\Sim$ with access 
    to a \UTO and a 
    \FIDIO that produces random transcripts which are statistically indistinguishable 
    from honest non-aborting transcripts.
\end{theorem}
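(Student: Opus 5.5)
Following the template of~\cite{AC:BDDLMP24}, I would build the simulator by reversing the order in which a transcript is generated: first the response, then the commitment and challenge. Given $\bbE_\pk=(\publicCurve,S_\pk)$, $\Sim$ queries the \UTO on $(\bbE_\pk,\degreeBound)$ and gets an isogeny $\respIsogeny:\bbE_\pk\to\bbE'=(E',S')$ in efficient representation. It sets $\commitCurve:=E'$ and reads off the challenge as the matrix $\chMatrix\in\GLtwo/Z$ with $S'=Z\chMatrix\cdot(P_{E'},Q_{E'})$. This is computable because $S'=\respIsogeny(S_\pk)$, which holds since $\deg\respIsogeny$ is odd, hence coprime to $N$, and $\Gamma$ is additively stable (\Cref{rmk:HomAndSpanGamma}). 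It also outputs $\gamma$ by a discrete logarithm in the $2^{\etorsion}$-torsion. Finally, $\Sim$ queries the \FIDIO on $\commitCurve$ with degree $2^a-\deg\respIsogeny$ to obtain $\auxIsogeny$, builds the Kani diamond, and outputs the kernel of $\Phi$, exactly as the honest prover does. All steps take polynomial time given the oracles.

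The core of the proof is to show that honest non-aborting transcripts have the same distribution. I would split this into three claims.

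\textbf{(1) Commitment and challenge.} In an honest run, $\commitCurve$ is statistically close to uniform because $\norm(\commitIdeal)\approx p^2$ is prime. The challenge $S_\cmt$ is uniform and independent of $\commitCurve$. Hence $\bbE_\cmt$ is statistically close to uniform over supersingular curves with $Z$-structure. Conditioning on non-abort, i.e.\ on the existence of an odd-degree isogeny of degree $\le\degreeBound$ in $\Hom(\bbE_\pk,\bbE_\cmt)$, gives exactly the distribution of $\bbE'$ in item 1 of \Cref{def:uto}. I will need to check that the conditioning by the existence event commutes with the statistical closeness. The loss is bounded by the statistical distance times the inverse of the non-abort probability, which is $1-O(\kappa^{-2})$ by \Cref{sec:lattice_shapes}.

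\textbf{(2) Response given $\bbE_\cmt$.} The honest prover samples $\alpha$ uniformly among elements of $I_{\rho,\curvelevel_\com}$ of odd norm $<\degreeBound\,\norm(I_{\rho,\curvelevel_\com})$. The map $\alpha\mapsto\xi(\alpha)$ from \Cref{eq:Xi}, together with \Cref{thm:Deuring_with_level_structure} and \Cref{eq:equivalent_ideals}, is a degree-scaling bijection between $I_{\rho,\curvelevel_\com}$ and $\Hom(\bbE_\pk,\bbE_\cmt)$. So $\respIsogeny$ is uniform among odd-degree isogenies in $\Hom(\bbE_\pk,\bbE_\cmt)$ of degree $<\degreeBound$, up to post-composition with automorphisms. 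This matches item 2 of \Cref{def:uto}.

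\textbf{(3) Auxiliary data.} $\auxIsogeny$ is a uniform isogeny of the prescribed degree from $\commitCurve$ in both worlds, so the Kani kernel has identical distributions. The challenge matrix and $\gamma$ are deterministic functions of the previous data.

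I expect the main obstacle to be the bookkeeping in (2) and the first part of (1). Ideals determine isogenies only up to automorphisms of the codomain, and $\pm1$ lies in $Z$, so I must argue that the automorphism ambiguity is absorbed symmetrically in the honest and simulated distributions. This is immediate when $j(\commitCurve)\neq0,1728$, which happens with overwhelming probability. I also need to confirm that the distribution of the level structure $S_\cmt$ induced through $M_\cmt=M'_\cmt\chMatrix^{-1}$ is exactly uniform on $\GLtwo/Z$ rather than merely close.
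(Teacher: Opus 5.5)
Your proposal is correct and follows the same route as the paper. The paper's simulator likewise queries the \UTO on $(\publicCurve, Z\cdot(\Ppk,\Qpk))$, reads off $\chMatrix$ from the image level structure, calls the \FIDIO on $\commitCurve$ with degree $2^a-\deg\respIsogeny$, and outputs the Kani kernel data $(a,\auxCurve,\Paux,\Qaux)$. Your claims (1)--(3) spell out the distributional matching that the paper handles in one line, by appealing to the oracle definitions and noting that the \UTO's odd-degree condition is exactly the honest non-restart condition.
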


\begin{proof}
    Let \publicCurve be a random supersingular elliptic curve over $\Fpsq$ and 
    ${\Ppk},{\Qpk}$ be a random basis of the $\torsion$-torsion.
    We consider the following simulator \Sim that using the \UTO and \FIDIO 
    returns a transcript $(\commit,\ch,\resp)$:
    \begin{enumerate}
        \item \Sim sets  $ \chLevelStructure \gets Z \cdot 
            \vv{\Ppk}{\Qpk}$, then calls the \UTO with input 
            $(\publicCurve, \chLevelStructure) $ and $\degreeBound$ to get the 
            isogeny
            $\respIsogeny :  (\publicCurve, \chLevelStructure) \to 
            (\commitCurve, \commitLevelStructure)$;
            \label{item:sim_commitment_hd}
          \item \Sim computes the matrix $\chMatrix \gets \GLtwo/Z$ such that
            $S_\cmt = ZM \cdot (P_{\commitCurve}, Q_{\commitCurve})$
            and sets $\ch := \chMatrix$;
            \label{item:sim_challenge_hd}
          \item \Sim sets $\commit := \left(j(\commitCurve)\right)$;
            \label{item:sim_commit_hd}
        \item  Let $d = \deg(\respIsogeny) $, $a = \lceil \log ( d ) \rceil$, \Sim 
            calls the \FIDIO with input $\commitCurve, 2^a - d$ to get 
            $\auxIsogeny : \commitCurve \to \auxCurve$;
            \label{item:sim_aux_hd}
        \item Let $\psi = \auxIsogeny\circ \respIsogeny$, \Sim sets 
            $(\Paux,\Qaux) = ([d^{-1}]\psi(\Ppk), [d^{-1}]\psi(\Qpk))$; \label{item:sim_aux_points_hd}
        \item \Sim sets $\resp := (a,\auxCurve, \Paux, \Qaux)$;
            \label{item:sim_resp_hd}
    \end{enumerate}
    It is clear from the definition of the oracles and the steps that the 
    response consists of a valid isogeny representation distributed as in the 
    honest transcript that can be used to re-compute $\respIsogeny : 
    \publicCurve \to \commitCurve$.
    Finally, note that the \UTO only returns isogenies for curves with 
    level structure that admit odd degree isogenies from the input curve with 
    level structure, that is the same condition required for the honest
    transcript to not restart.
    \qed
\end{proof}

Since the commitment set, \ie the set of supersingular elliptic curves with 
torsion points, is a superset of \SQIsign commitment set -- just supersingular 
elliptic curves -- the min-entropy of the commitment in \SQItorsion is also 
negligible in the security parameter by~\cite[Lemma~4.1]{C:ABDPW25}.
This, in combination with from \Cref{thm:sqisign_hd_sec_oracles}, 
\Cref{thm:soundness}, immediately implies the \EUFCMA security of \SQInstructor 
in the \UTO, \FIDIO and random oracle models.
More precisely: if the signature restarting probability is negligible in the 
security parameter this is an immediate implication of~\cite[Lemma~3.5]{EC:AABN02}.
If not, assuming that the signing procedure is repeated enough times to ensure 
negligible failure probability, the \EUFCMA security follows 
from~\cite[Theorem~1]{IWSEC:BerRou18}.

%

%
%
%
%
%

%
%

%

%
%
%
%

\subsection{Implementation and benchmarks}
\label{sec:hd_parameters_and_benchmarks}

We implemented the protocol described in this section by forking
\SQIsign's reference implementation. %
The bound $\degreeBound$ was set according to the ``intermediate''
choice of \Cref{eq:degree_bound_intermediate}. %
We compare the performances of \SQInstructor and \SQIsign in
\Cref{tab:benchmarks_hd}. %
We observe very similar performances across the board, with a slight
increase in signature size and verification time, both attributable to
the longer $(2,2)$-isogeny chain in \SQInstructor. %
We remark that both \SQInstructor and \SQIsign's signatures could be
slightly compressed, however we deem this optimization minor.

\begin{table}
    \centering
    \begin{tabular}{l@{~~~}l >{\raggedleft}p{15mm} >{\raggedleft}p{15mm} >{\raggedleft}p{15mm} >{\raggedleft}p{13mm} >{\raggedleft\arraybackslash}p{13mm}}
      \toprule
             &      & \multicolumn{3}{c}{(Mcycles)}& \multicolumn{2}{c}{(bytes)} \\
      Scheme & Security & KeyGen & Sign & Verify & pk & sig      \\
      \toprule
                  & NIST I   &  33.5   &  78.9  &  6.0 &  65 & 174    \\
      \SQItorsion & NIST III & 102.9   & 240.1  & 20.8 &  97 & 272    \\
                  & NIST V   & 166.2   & 420.2  & 39.0 & 129 & 355    \\
      \midrule
               & NIST I   &  32.1  &  76.8  &  4.3 &  65 & 148        \\
      \SQIsign & NIST III & 101.3  & 237.7  & 14.4 &  97 & 224        \\
               & NIST V   & 166.9  & 398.6  & 27.7 & 129 & 292        \\
         \bottomrule
    \end{tabular}
    \smallskip
    \caption{Comparison between \SQItorsion and \SQIsign. Timings are
      the median of 100 runs on a laptop CPU AMD Ryzen 7000.}
    \label{tab:benchmarks_hd}
\end{table}

\bibliographystyle{splncs04}
\bibliography{additional,cryptobib/abbrev3,cryptobib/crypto}

@electronic{Leroux,
  author = "Antonin Leroux",
  title = "Quaternion algebras and isogeny-based cryptography",
  howpublished = {\url{https://www.lix.polytechnique.fr/Labo/Antonin.LEROUX/manuscrit_these.pdf}},
  year = "2022",
  note = "Last accessed March 23, 2023",
}

@electronic{LearningToSQI,
  author = {Maria {Corte-Real Santos} and Giacomo Pope},
  title = {Learning to SQI},
  subtitle = {Implementing SQISign in SageMath},
  howpublished = {\url{https://learningtosqi.github.io/}},
  year = {2022},
  note = {Last accessed June 11, 2023},
}

@book{quaternion_book,
  title={Quaternion algebras},
  author={Voight, John},
  year={2021},
  publisher={Springer Nature}
}

@inproceedings{arpin2024generalized,
  title={Generalized class group actions on oriented elliptic curves with level structure},
  author={Arpin, Sarah and Castryck, Wouter and Eriksen, Jonathan Komada and Lorenzon, Gioella and Vercauteren, Frederik},
  booktitle={International Workshop on the Arithmetic of Finite Fields},
  pages={171--190},
  year={2024},
  organization={Springer}
}

@inproceedings{qlapoti,
  author       = {Giacomo Borin and
                  Maria {Corte{-}Real Santos} and
                  {Jonathan Komada} Eriksen and
                  Riccardo Invernizzi and
                  Marzio Mula and
                  Sina Schaeffler and
                  Frederik Vercauteren},
  editor       = {Goichiro Hanaoka and
                  Bo{-}Yin Yang},
  title        = {Qlapoti: Simple and Efficient Translation of Quaternion Ideals to
                  Isogenies},
  booktitle    = {Advances in Cryptology - {ASIACRYPT} 2025 - 31st International Conference
                  on the Theory and Application of Cryptology and Information Security,
                  Melbourne, VIC, Australia, December 8-12, 2025, Proceedings, Part
                  {IV}},
  series       = {Lecture Notes in Computer Science},
  volume       = {16248},
  pages        = {174--205},
  publisher    = {Springer},
  year         = {2025},
  url          = {https://doi.org/10.1007/978-981-95-5113-2\_6},
  doi          = {10.1007/978-981-95-5113-2\_6}
}

@book{silverman2009arithmetic,
  title={The arithmetic of elliptic curves},
  author={Silverman, Joseph H},
  volume={106},
  year={2009},
  publisher={Springer}
}

@article{de2017mathematics,
  title={Mathematics of isogeny based cryptography},
  author={De Feo, Luca},
  journal={arXiv preprint arXiv:1711.04062},
  year={2017}
}

@manual{sage,
    label        = {Sag95},
    author       = {{The Sage Developers}},
    title        = {{S}age{M}ath, the {S}age {M}athematics {S}oftware {S}ystem},
    url          = {https://www.sagemath.org},
    version      = {10.5},
    year         = {2025},
    note         = {DOI 10.5281/zenodo.6259615},
}

@article{boneh2020graduate,
  title={A graduate course in applied cryptography},
  author={Boneh, Dan and Shoup, Victor},
  journal={Draft 0.5},
  pages={14},
  year={2020}
}

@misc{codogni2025spectraltheoryisogenygraphs,
      title={Spectral Theory of Isogeny Graphs}, 
      author={Giulio Codogni and Guido Lido},
      year={2025},
      eprint={2308.13913},
      archivePrefix={arXiv},
      primaryClass={math.NT},
      url={https://arxiv.org/abs/2308.13913}, 
}

@misc{forensic,
      title={Forensic categories: a framework for {SQI}sign-like primitives}, 
      author={Andrea Basso and Luca {De Feo} and Sikhar Patranabis and Ilinca Radulescu and Benjamin Wesolowski},
      year={2026},
      howpublished={Private communication} 
}

@misc{NIST-call,
      author = {{National Institute of Standards and Technology (NIST)}},
      title = {{Call for Additional Digital Signature Schemes for
the Post-Quantum Cryptography Standardization Process}},
      year = {2022},
      url = {https://csrc.nist.gov/csrc/media/Projects/pqc-dig-sig/documents/call-for-proposals-dig-sig-sept-2022.pdf}
}

@misc{sigma_protocols,
        author = {Damgard},
        title = {On $\Sigma$-protocols},
        howpublished = {\url{http://www.cs.au.dk/%7eivan/Sigma.pdf}},
        year = {2010},
        accessed = {2026-02-03},
}

@Article{klpt,
  author    = {Kohel, David and Lauter, Kristin and Petit, Christophe and Tignol, Jean-Pierre},
  title     = {On the quaternion-isogeny path problem},
  journal   = {LMS Journal of Computation and Mathematics},
  year      = {2014},
  volume    = {17},
  number    = {A},
  pages     = {418--432},
  publisher = {Cambridge Univ Press},
}

@article{arpin-sarah-borel-lvl-structures,
  title={Adding Level Structure to Supersingular Elliptic Curve Isogeny Graphs},
  author={Sarah Arpin},
  journal={Journal de th{\'e}orie des nombres de Bordeaux},
  year={2022},
  url={https://api.semanticscholar.org/CorpusID:247292410}
}

@string{springer =              "Springer"}

\appendix

\section{Pseudocode}\label{sec:pseudocode}

\subsection{Algorithms from other authors}
To make our paper self contained, 
in this section we recall some known algorithms from the \SQIsign literature, and 
more specifically from Leroux thesis~\cite{Leroux}.
In our algorithms from \Cref{sec:klpt} we use the following:

\begin{itemize}
    \item \EquivalentPrimeIdeal, that on input a fractional ideal $I$ returns the 
        smallest left equivalent ideal $I'$ of prime norm and $\alpha \in I$ such 
        that $I' = \chi_I(\alpha)$.
        We refer to~\cite[Algorithm 6]{Leroux} for details; 
        \label{alg:EquivalentPrimeIdeal}
    \item \FullRepresentInteger, that on input an integer $M$ returns a 
        quaternion in $\O_0$
        of norm $M$ in $\B$ (if it exists). We refer to~\cite[Algorithm 
        4]{Leroux} for details; \label{alg:FullRepresentInteger}
    \item $\FullStrongApproximation_N$ (\cite[Algorithm 5]{Leroux}), that on input an integer $M$ and a 
        pair $C,D \in \Z$ returns $\mu \in \O_0$ such that 
        $n(\mu) = N$ and $2\mu = \lambda (jC + kD) + M \mu_1$ for some $\mu_1 
        \in \O_0$ and $\lambda \in \Z$.
        \label{alg:FullStrongApproximation}
    \item \IdealSuborderEichlerNorm~\cite[Algorithm 
        14]{Leroux}, that we recall in \Cref{alg:IdealSuborderEichlerNorm}, 
        with notations slightly adapted to the use it has in~\Cref{sec:squares};
    \item \IdealEichlerNorm, that we recall in \Cref{alg:IdealEichlerNorm};
    \item \GenericKLPT~\cite[Algorithm 33]{Leroux}, which we include for comparison to our variants, with the original notations. 
    \item \SigningKLPT~\cite[Algorithm 11]{Leroux} which we include for comparison to our variants, with the original notations. 
    It relies on $\mathsf{RandomEquivalentEichlerIdeal}$ for rerandomization, which is explained (without pseudocode) in \Cref{sec:rerandomization_details}.
\end{itemize}

\begin{algorithm}
    \caption{$\IdealSuborderEichlerNorm_{\ell^e}$} \label{alg:IdealSuborderEichlerNorm}
    \begin{algorithmic}[1]
        \Require An integer $N$, two left $\O_0$-ideals $I,J$ of inert prime 
        norm $N_I,N_J$, with $\gcd(N, N_I,N_J) = 1$,
        \Ensure $\beta \in \Z + DI \cap J$ of norm $N_J \ell^e$. 
        \State {Select a random class $(C_2 : D_2) \in \mathbb{P}^1(\Z/D\Z)$. 
        \label{line:output_ideal_suborder_eichler_norm}}
        \State Compute $\gamma = \FullStrongApproximation_{N_J \ell^{e_1}}(N, C_2, D_2)$. If 
        the computation fails go back to Step 
        \ref{line:output_ideal_suborder_eichler_norm}.
        \State Compute $(C_0 : D_0) = \mathsf{EichlerModConstraint}(I, \gamma, 1)$.
        \State Compute $(C_3 : D_3) = \mathsf{IdealModConstraint}(J, \gamma)$.
        \State Sample a random $D'_2$ in $\Z/N\Z$, compute $C'_2 = -D'_2 C_2 
        (D_2)^{-1} \mod N$.
        \State Compute $C_1 = \CRT_{N_I,N,N_J}(C_0, C'_2, C_3)$, $D_1 =
        \CRT_{N_I,N,N_J}(D_0, D'_2, D_3)$.
        \State Compute $\mu = \FullStrongApproximation_{N_J \ell^{e-e_1} / n(\gamma)}(N_I 
        N N_J, C_1, D_1)$. If it fails, go back to step
        \ref{line:output_ideal_suborder_eichler_norm}.
        \State return $\beta = \gamma \mu$.
    \end{algorithmic}
\end{algorithm}

\begin{algorithm}
    \caption{$\IdealEichlerNorm_\mathcal{N} (I,J)$} \label{alg:IdealEichlerNorm}
    \begin{algorithmic}[1]
        \Require Two left $\O_0$-ideals $I,J$ of inert prime norm $N_I,N_J$, with 
        $\gcd(N_I,N_J) = 1$,
        \Ensure $\beta \in (\Z + I) \cap J$ of norm in $\normklpt{J} \mathcal{N}$.
        \State Compute $L = \EquivalentPrimeIdeal(J)$, $L = \chi_J(\delta)$ for 
        $\delta \in J$. Set $N = n(L)$.
        \State Compute $\gamma = \FullRepresentInteger_{ N_J \mathcal{N} }$.
        \State Compute $(C_0 : D_0) = \mathsf{IdealModConstraint}(L, \gamma)$.
        \State Compute $(C_1 : D_1) = \mathsf{EichlerModConstraint}(I, \gamma, \delta)$.
        \State Compute $C = \CRT_{N_J,N_I}(C_0, C_1)$ and $D = \CRT_{N_J,N_I}(D_0, D_1)$.
        \State Compute $\mu = \StrongApproximation_{N_J \mathcal{N} / \normklpt{ \gamma }}(N_I 
        N_J, C, D)$.
        \State return $\beta = \gamma \mu \delta$.
    \end{algorithmic}
\end{algorithm}

\subsection{Pseudocode for \BorelSigningKLPT}

in \Cref{alg:borel_klpt}, we give full pseudocode for the more efficient KLPT variant for Borel level structure mentioned  in \Cref{sec:klpt_others}.

\begin{algorithm}
    \caption{$\BorelSigningKLPT_{\ell^e}(I,I_{\torsion})$}
    \label{alg:borel_klpt}
    \begin{algorithmic}[1]
        \Require  $I$ and $I_{\torsion}$ left ideals of the same maximal order $\torsion$ in $\Bpinf$, with $\torsion\gets\normklpt{I_{\torsion}}$ prime or power of prime of known factorization and $e$ such that $\ell^e$ large enough
        (See \Cref{prop:borelKLPT}).
        \Ensure $J\sim I$ such that $J=\chi_I(\beta)$ with $\beta\in(\Z+I_{\torsion})\cap I$ and $\normklpt{J}\mid 2^e$
        \State $\alpha,I_c' \gets \EquivalentPrimeIdeal(I_c)$ 
        \mycomment{$I_c' = \chi_{I_c}(\alpha)$}
        \State Let $I_1 \gets \frac{1}{\alpha} I\alpha$
        \State Let $I_{\torsion}' \gets \frac{1}{\alpha} I_{\torsion}\alpha$
        \State 
        $\alpha_I,I_0\gets$\InitialRerandomization$(I_1,\Z+I_{\torsion}',I_c')$ \mycomment{See \Cref{sec:klpt_rerandomization}}
        \State $\beta\gets$\IdealEichlerNorm$_{\ell^e}(I_c'I_{\torsion}',I_0)$
        \State $J_0 \gets \chi_{I_0}(\beta)$ and $J_a\gets[I_c']^*J_0$ and 
        $J\gets\alpha J_a \frac{1}{\alpha}$
    \State\Return $J$ 
    \end{algorithmic}
\end{algorithm}

\begin{algorithm}
    \caption{$\GenericKLPT_{\mathcal{N}}(\O_1,I)$}
    \label{alg:generic_klpt}
    \begin{algorithmic}[1]
    \Require $\O_1$ a maximal order and $I$ left $\O_1$-ideal
    \Ensure $J\sim I$ of norm in $\mathcal{N}$
    \State Compute $I_0=I(\O_0,\O_1)$ \Comment{$I_0$ primitive connecting ideal of $\O_0$ and $\O_1$}
    \State Compute $K=\textsf{Random}\EquivalentPrimeIdeal(I_0)$ and $\alpha$ such that $K=I_0\alpha$
    \State Compute $I'=\alpha^{-1}I\alpha$ and $L=[K]^*I'$
    \State Compute $\mu=\IdealEichlerNorm_{\mathcal{N}}(K,L)$
    \State Set $\beta=\alpha\mu\alpha^{-1}$
    \State\Return $\chi_I(\beta)$
    \end{algorithmic}
\end{algorithm}

\begin{algorithm}
    \caption{$\SigningKLPT_{2^e}(I_{\tau},I)$}
    \label{alg:signing_klpt}
    \begin{algorithmic}[1]
    \Require $I_\tau$ a primitive left $\O_0$ and right $\O_1$-ideal of prime norm $N_\tau$
    \Require $I$ a left $O_1$-ideal
    \Ensure $J\sim I$ of norm $2^e$
    \State Compute $K=\mathsf{RandomEquivalentEichlerIdeal}(I,N_\tau)$
    \State Compute $K'=[I_\tau]^*K$ and set $L=\EquivalentPrimeIdeal(K')$, $L=\chi_{K'}(\delta)$ for $\delta\in K'$ with $N=n(L)$. Sample $e_0\in e_0(N)$.
    \State Compute $\gamma=\FullRepresentInteger_{ND2^{e_0}}$
    \State Compute $(C_0:D_0)=\textsf{IdealModConstraint}(L,\gamma)$
    \State Compute $(C_1:D_1)=\textsf{EichlerModConstraint}(I_\tau,\gamma,\delta)$
    \State Compute $C=\textsf{CRT}_{N_\tau,N}(C_0,C_1)$ and $D=\textsf{CRT}_{N_\tau,N}(D_0,D_1)$
    \State $e_0 \gets v_2(n(\gamma))$ and $e_1\gets e-e_0$
    \State Compute $\mu=\FullStrongApproximation_{2^{e_1}}(NN_\tau,C,D)$. If it fails, go back to step 3.
    \State Set $\beta=\gamma\mu$
    \State\Return $J=[I_\tau]_*\chi_L(\beta)$
    \end{algorithmic}
\end{algorithm}

\section{Generalization of \ScalarSigningKLPT to inputs of non-square norm modulo $\torsion$} \label{sec:squares}

In \Cref{sec:klpt} we assumed the input to \ScalarSigningKLPT to be an ideal $I$ of norm a square modulo $\torsion$. 
In this section, we first explain that equivalences in $\Z+N\O$ preserve the square or non-square character of ideal norms modulo $\torsion$ (or to say it short, the squarity of ideals). 
It is therefore clear that the input to \ScalarSigningKLPT  has the same squarity than the input to \IdealSuborderEichlerNorm  in this algorithm.
Next, we analyze why these algorithms only work for ideals with square norm modulo $\torsion$ (which we'll call square ideals for short).
Finally, we explain in which cases and how it is possible to adjust inputs to \ScalarSigningKLPT so that it can also be used for non-square input ideals.

For conciseness, we use the word "squarity" for integers to designate the character of being or not being a square modulo $\torsion$.
For quaternion ideals, their squarity means the squarity of their norms.
Similarly, if an ideal is said square, we mean that its norm is a square modulo $\torsion$.

\subsection{Equivalence-taking in $\Z+\torsion \O$ preserves squares}

For $\O$ a maximal ideal in $\Bpinf$, taking equivalences in $\Z+\torsion \O$ cannot alter the squarity of an $\O$-ideal, so that the classes in Cl($\Z+\torsion \O$) have either only or no element whose norm is a square modulo $\torsion$.
This fact can be understood in two ways, the elementary and the matrix way.

\begin{lemma}\label{lem:square_preserving}
    Let $\O$ a maximal order in $\Bpinf$ and $I$ a left $\O$-ideal. Let $N$ be a positive integer coprime to $\normklpt{I}$.
    Let $\alpha\in I\cap(\Z+N \O)$ of norm coprime to $N$ and $J=I\frac{\bar\alpha}{\normklpt{I}}$.
    Then either both $\normklpt{I}$ and $\normklpt{J}$ are squares modulo $N$, or none of them is a square modulo $N$.
\end{lemma}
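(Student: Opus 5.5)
The plan is to compute $\normklpt{J}$ directly and show that it differs from $\normklpt{I}$ by a factor that is a square modulo $N$. Since the reduced norm is multiplicative, $\normklpt{J} = \normklpt{I}\,\normklpt{\bar\alpha}/\normklpt{I}^2 = \normklpt{\alpha}/\normklpt{I}$. So everything reduces to understanding $\normklpt{\alpha}$ modulo $N$.

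Next I use the hypothesis $\alpha\in\Z+N\O$. Write $\alpha = a + N\omega$ with $a\in\Z$ and $\omega\in\O$. Then
\[
\normklpt{\alpha} = (a+N\omega)(a+N\bar\omega) = a^2 + aN\tr(\omega) + N^2\normklpt{\omega} \equiv a^2 \pmod N ,
\]
because $\tr(\omega),\normklpt{\omega}\in\Z$ for $\omega$ in an order. Since $\normklpt{\alpha}$ is coprime to $N$, $a$ is a unit modulo $N$, so $\normklpt{\alpha}$ is a nonzero square unit modulo $N$. This matches the matrix picture: the action of $\alpha$ on $E[N]$ is the scalar matrix $a\,\Id$, whose determinant is $a^2$.

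Finally, $\normklpt{\alpha} = \normklpt{I}\cdot\normklpt{J}$, and $\normklpt{I}$ is invertible modulo $N$ by assumption. Hence $\normklpt{J} \equiv a^2\,\normklpt{I}^{-1} \equiv (a\,\normklpt{I}^{-1})^2\,\normklpt{I} \pmod N$. In particular $\normklpt{J}$ is also invertible modulo $N$, and it differs from $\normklpt{I}$ by a unit square. So one is a square modulo $N$ if and only if the other is.

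The only slightly delicate points are checking that $\normklpt{J}$ is an integer, which holds since $J$ is integral with $\alpha\in I$, and keeping track that all quantities are units modulo $N$ so that "square" is unambiguous. The identity $\normklpt{J}=\normklpt{\alpha}/\normklpt{I}$ is standard for $\chi_I$, so I expect no real obstacle.
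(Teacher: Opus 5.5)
Your proof is correct and follows the same route as the paper's first (elementary) proof. Both write $\alpha$ as an integer plus $N$ times an element of $\O$, observe that $\normklpt{\alpha}$ is a square modulo $N$, and divide by $\normklpt{I}$; your matrix remark corresponds to the paper's second proof. Your version is in fact cleaner: you keep the cross term $aN\tr(\omega)$ and invert $\normklpt{I}$ modulo $N$, which avoids the paper's step writing $\alpha=\normklpt{I}n+N\beta$ with $\tr(\beta)=0$, a normalization that is not justified in general.
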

\begin{proof}
    Since $\alpha\in I\cap(\Z+N \O)$, $\alpha=\norm{I}n+N\beta$ for some $n\in \Z$ and $\beta\in \O$ with $\tr(\beta)=0$.
    Therefore $\normklpt{\alpha}=\normklpt{I}^2n^2+N^2\normklpt{\beta}$. 
    Since $\alpha\in I$, $\normklpt{I}\mid \normklpt{\alpha}$ so $\normklpt{I}$ also divides $\normklpt{\beta}$.
    As $N$ and $\normklpt{I}$ are coprime,  there exists $m\in Z$ such that $\normklpt{\alpha} =\normklpt{I}^2n^2+N^2m\normklpt{I}$.
    Therefore we obtain $\normklpt{J}=\normklpt{\alpha}\normklpt{I}= \normklpt{I}n^2+N^2m$, which gives $\normklpt{J}=\normklpt{I}n^2\mod N$.
    $n^2$ is an integer square and therefore a square modulo $N$.
    As a consequence, $\normklpt{J}$ and $\normklpt{I}$ are either both squares or both non-squares modulo $N$.
    \qed
\end{proof}
\begin{proof}
    This can also be seen by the matrices from \Cref{lem:surjectivity_torsion}. 
    Compute $M_I$ and $M_J$ the matrices associated to (the isogenies associated to) the ideal $I$ and $J$ from~\Cref{lem:square_preserving}.
    We know that these matrices are the same up to multiplication by a scalar matrix, since the ideals are equivalent in $(\Z+N \O)$.
    Scalar matrices have square degree, so the determinants of $M_I$ and $M_J$ are either both squares or both non-squares modulo N.
    These determinants are, modulo N, equal to the norms of $I$ and $J$.
    \qed
\end{proof}

\Cref{lem:square_preserving} gives a first idea of why we stated \ScalarSigningKLPT for square input ideals only.
It makes clear that any algorithm taking equivalences in $\Z+\torsion \O$ like \ScalarSigningKLPT has the inherent limitation that it cannot create an output ideal whose norm has a different squarity than the input. 
Given that our \ScalarSigningKLPT  has a square target norm (the exponent $e$ is even), it can obviously only treat square inputs. 

\subsection{Details of \IdealSuborderEichlerNorm applying only to squares}

Knowing that equivalences respecting $\Z+\torsion \O$ preserve squarity, it seems natural to use the exact same algorithm \ScalarSigningKLPT, but with a non-square output norm, and a non-square input ideal.
However, this approach fails due to inherent limitations of its \IdealSuborderEichlerNorm subroutine, whose pseudocode can be found in \Cref{alg:IdealSuborderEichlerNorm} or in Leroux' thesis~\cite[Algorithm 14]{Leroux}.
More precisely, with the notations from the pseudocode of~\Cref{alg:IdealSuborderEichlerNorm} and the notation $\mu_0=j(C_1i+D_1)$, one notices the following points which show that the output norm $\ell^e$ must be a square modulo $\torsion$:

\begin{itemize}
\item For $\lambda$ to exist, we need $\ell^e$ and $\normklpt{\mu_0}$ of same squarity
\item $\ell^e$ has the same squarity as the product $\normklpt{\gamma}\normklpt{\mu_0}$
\item $\normklpt{\gamma}$ and $\normklpt{\mu_0}$ have the same squarity, so their product is a square modulo $\torsion$
\end{itemize}

In consequence, the output of \IdealSuborderEichlerNorm has square norm modulo \torsion.
Since \ScalarSigningKLPT takes all equivalences to the input in $\Z+\torsion \O$, \Cref{lem:square_preserving} means that the input ideal to  \ScalarSigningKLPT  must have square norm modulo $\torsion$.

\subsection{Workaround for non-squares when a solution exists}

Despite this obstacle, suitable input transformations allow to use \ScalarSigningKLPT for non-square input ideals, as long as the output is expected to be a power of a non-square.
Denote $\ell^e$ the required output norm with $\ell$ not a square modulo $\torsion$ and $e$ odd, and $I$ an input ideal of norm not a square modulo $\torsion$. 
Denote $\O$ the left order of $I$.
Then we can use a random right $\O$-ideal $I_r$ of non-square norm $\ell$ to 
create an alternative input $I_rI$ which has square norm modulo $\torsion$.
We can then call \ScalarSigningKLPT on $I_rI$, with output norm $\ell^{e-1}$ which is square modulo $\torsion$ as required.
Finally the obtained output $J$ of \ScalarSigningKLPT is multiplied by $\bar I_r$ to get the ideal $\bar I_rJ$ of norm $\ell^e$ which is equivalent to $I$ in $\Z+\torsion \O$.
\Cref{alg:gen_scalar_klpt} is  obtained by applying these step to all non-square input ideals (and output norms).

\begin{algorithm}
    \caption{$\text{\GeneralizedScalarSigningKLPT}_{\ell^e}(I,\torsion)$}
    \label{alg:gen_scalar_klpt}
    \begin{algorithmic}[1]
        \Require $\torsion$ prime or power of prime of known factorization, $I$ 
        left ideal of a maximal order $\O$ in $B_{p,\infty}$ such that $
        \ell^e\normklpt{I}\mod\torsion$ is a square, $e\in\Z$ even such that $\ell^e$ 
        large enough while $\ell$ is a small prime (See \Cref{prop:scalarKLPT}).
        \Ensure $J\sim I$ such that $J=\chi_I(\beta)$ with $\beta\in(\Z+\torsion 
        \O)\cap I$ and $\normklpt{J}\mid 2^e$
        \If{$\normklpt{I}$ is a square modulo $\torsion$}
            \State $J\gets\ScalarSigningKLPT_{\ell^e}(I,\torsion)$
        \Else
            \State Let $I_r$ a uniformly random right $\O$-ideal of norm $\ell$ non such that $I_rI$ is primitive
            \State $J_r\gets\ScalarSigningKLPT_{\ell^{e-1}}(I_rI,\torsion)$
            \State $J\gets\bar I_rJ_r$
            \If{$J$ not primitive} \State Restart\EndIf
        \EndIf
    \State\Return $J$ \end{algorithmic}
\end{algorithm}

Using this additional input treatment, \GeneralizedScalarSigningKLPT can solve all solvable squarity combinations for smooth norms.
The cases where the in- and output norms have different squarity don't have a solution, since ideals of such norms cannot be equivalent in $\Z+\torsion \O$ as seen in~\Cref{lem:square_preserving}.

\section{Proofs} \label{sec:proofs}
In this appendix we collect the detailed proofs of the statements that we
omitted in the main body of the paper.
For the sake of readability we restate the statements before their proofs.

\subsection{Proofs from \Cref{sec:level_all}}
\label{sec:proofs_level_structures}
\label{sec:proof_deuring_with_level_structure}

%
%
%
%
%
%
%
%
%
%
%
%
%
%
%
%
%
%
%
%
%
%
%
%
%

%
%
%
%
%
%
%
%
%
%
%
%
%
%
%
%
%
%
%
%
%
%
%
%
%
%
%
%
%
%
%
%
%
%
%
%
%
%
%
%
%
%
%
%
%
%
%
%
%
%
%
%
%
%
%
%
%
%
%
%
%
%
%
%
%
%
%
%
%
%
%
%
%
%
%
%
%
%
%
%
%
%
%
%
%
%
%
%
%
%

\DeuringWithLevelStructure*

\begin{proof}
For the first part we start by proving that if $I$ is an ideal 
equivalent to a certain $I_{\rho, \bbE'}$, then $I = I_{\rho', \bbE'}$ for some 
$\rho' \in \Hom(\bbE, \bbE')$. By hypothesis we know that $I = I_{\rho, \bbE'} 
\cdot \tfrac{\alpha}{Q}$ for some $\alpha\in \OrderEnd(\bbE)$ and some positive 
integer $Q$, hence by \eqref{eq:equivalent_ideals} it is enough finding a $\rho' 
$  in $\OrderEnd(\bbE)$ such that $\tfrac{\alpha}{Q} = \tfrac{\hat\rho \circ 
\rho'}{\deg(\rho)}$ or equivalently $\tfrac{\rho \circ \alpha}{Q} = \rho'$. In 
other words this is equivalent to proving that $\rho \circ \alpha$ lies in the 
sublattice $Q \cdot \Hom(\bbE, \bbE')$ of $\Hom(\bbE, \bbE')$. To prove this 
pick an element $\phi \in \Hom(\bbE, \bbE')$ of degree coprime to $Q$. Then by 
definition $\hat \phi \rho$ is an element of $I_{\rho, \bbE'}$ hence 
$\tfrac{1}{Q}\hat \phi \rho \alpha$ is an element of $I \subset \OrderEnd(\bbE)$ 
(we are supposing $I$ is a proper ideal, not only a fractional ideal) hence we 
can find an element $\beta \in \OrderEnd(\bbE)$ such that
\[
    \deg(\phi) \cdot \rho\alpha = Q \phi \cdot  \tfrac{1}{Q}\hat \phi \rho 
    \alpha = Q\phi \beta \in Q \cdot \Hom(\bbE, \bbE')
\]
and since multiplication by $\deg \phi$ is invertible in $\Hom(\bbE, \bbE')/Q 
\Hom(\bbE, \bbE')$ we deduce that also $\rho\alpha$ lies in $Q \Hom(\bbE, 
\bbE')$.

Now let $I$ be any (non-zero) locally principal ideal of 
$\OrderEnd(\bbE)$, for which we want to prove that it is of the form 
$I_{\rho',\bbE'}$.  First of all we prove that $I$ is equivalent to some ideal 
whose index in $\OrderEnd(\bbE)$ is coprime to $Np$. Indeed $I$ is a lattice 
isomorphic to $\Z^4$ and by the chinese remainder theorem, for each exponent $e$ 
the natural map \[
    I \lto \prod_{\ell \mid Np} \frac{I}{\ell^e I}
\]
is surjective, hence a fortiori, if we denote $(I \bmod Q)$ the image of $I$ in 
$\OrderEnd(\bbE)/Q\OrderEnd(\bbE)$, the following map is surjective
\[
    I \lto \prod_{\ell \mid Np}(I \bmod \ell^e) \ .
\]
In other words the map \[
    I \lto \prod_{\ell \mid Np}(I \otimes \Z_\ell) \ ,
\]
has dense image. If we now fix a generator  $\alpha_\ell$ of $I \otimes \Z_\ell$ 
for each $\ell$ dividing $Np$, then we for each precision  $e$ we can find an 
element $\alpha$ of $I$ which is congruent to $\alpha_\ell$, modulo $\ell^e$ for 
all $\ell\mid Np$. Taking $e$ large enough this implies that $\alpha$ differs 
multiplicatively from $\alpha_\ell$ by a unit of $\OrderEnd(\bbE) \otimes 
\Z_\ell$ (indeed set $U \subset \OrderEnd(\bbE) \otimes \Z_\ell$ of elements 
congruent to $1$ modulo $\ell$ is an open containing units, and for $e$ large 
enough $\alpha$ lies in the neighborhood $U \alpha_\ell$ of $\alpha_\ell$) 
implying that $\alpha$ generates $I\otimes \Z_\ell$ for all $\ell\mid Np$. Now 
take such an $\alpha$ and write $\deg(\alpha) = Q_1 Q_2$, with $Q_1$ only 
multiple of primes dividing $Np$ and $Q_2$ coprime to $Np$: then the ideal $I 
\cdot  \tfrac{\hat\alpha}{Q_1}$ is equivalent to $I$ and has index in 
$\OrderEnd(\bbE)$ coprime to $Np$.

By what we have seen before, to prove that $I$ is of the form 
$I_{\rho',\bbE'}$ we can change $I$ to an equivalent one, hence we can suppose 
that $[\OrderEnd(\bbE):I]$ is coprime to $Np$. Then we claim that $$
\ker(I) = \cap_{\alpha \in I} \ker\alpha \,.
$$
is a subgroup of $E$ or order $\sqrt{[\OrderEnd(\bbE):I]}$: indeed it is enough 
to prove that for all primes $\ell$, the $\ker(I) \cap E[\ell^{\infty}]$ has 
order equal to the $\ell$ part of $\sqrt{[\OrderEnd(\bbE):I]}$, namely order 
equal to $\sqrt{[\OrderEnd(\bbE)\otimes \Z_\ell:I \otimes \Z_\ell]}$; this is 
clear for the $\ell$'s that do not divide the index of $I$ and for the primes 
dividing such an index, which are coprime to $pN$ it is a consequence of the 
following isomorphism $$\OrderEnd(\bbE)\otimes \Z_\ell = \End(E)\otimes \Z_\ell 
\cong M_2(\Z_\ell)$$
which keeps track of the action of endomorphism on the $\ell^\infty$-torsion of 
$E$ (hence on the kernels of the endomorphisms when intersected with the 
$\ell^\infty$-torsion) by taking  basis $(\tilde P, \tilde Q)$ of the Tate 
module $T_\ell E$ and sending an endomorphism $\alpha$ to the matrix $M\in 
M_2(\Z_\ell)$ such that $(\alpha(\tilde P), \alpha(\tilde Q)) = M\cdot (\tilde 
P, \tilde Q)$.
Then we can consider the isogeny $\gamma\colon E \to E/\ker(I)$ and the 
$\Gamma$-structure $\gamma(S)$ on $E/\ker(I)$, which is well defines since 
$\gamma$ has degree coprime to $N$: the ideal $I_{\gamma, (E/\ker(I), 
\gamma(S))}$ contains all elements of $\OrderEnd(\bbE)$ whose kernel contains 
$\ker(I) = \ker(\gamma)$ hence contains $I$ and it is actually equal to $I$ 
since by the previous computations has the same index in $\OrderEnd(\bbE)$.

Now the second part of the statement. One implication is clear using  \Cref{eq:equivalent_ideals} and the fact that if $u\colon \bbE' \to \bbE'$ is an isomorphism, then
$I_{u\circ \rho'} = 
\circ u \circ \rho' = 
\Hom(\bbE', \bbE) \circ \rho'  = I_{\rho'}$. 

For the other implication, up to using \Cref{eq:equivalent_ideals}, we can suppose that $ I_{\rho',\bbE'} = I_{\rho'',\bbE''}$.  First of all we notice that the kernel of $\rho'$ can be extracted from the 
ideal: indeed by definition $\ker \rho' $ is contained in the intersection 
$\cap_{i\in I} \ker i$, and, to see that the two are equal, we choose elements 
$\phi_1, \phi_2\in \Hom(\bbE_A,\bbE)$ such that $\deg(\rho')$ is coprime to 
$\deg(\phi_1)$ and both are coprime to $\deg(\phi_2)$, so that $i_1 = \hat\phi_1 
\circ \rho'$ and $i_2 = \hat\phi_2 \circ \rho'$  are two elements in $I$ such 
that $\ker i_1 \cap \ker i_2 = \ker \rho'$ (the existence of the isogenies 
$\phi_j$ is a consequence of \Cref{cor:HomLarge}.

Applying the same argument on $\rho''$  we deduce that $\ker \rho' = \ker 
\rho''$. In particular is $E''$ is the elliptic curve underlying $\bbE''$, 
there exists and isomorphism $u \colon E' \to E''$ such that $\rho'' = 
u\rho'$. It remains to prove that $u$ is an isomorphism also when we take 
into account the level structure, i.e. that $u(S') = S''$ where we wrte 
$\bbE'' = (E'', S'')$.
Applying \Cref{lem:surjectivity_torsion}, take $\phi_1$ an element $\phi'\in 
\Hom(\bbE, \bbE') $ sending exactly $S$ to $S'$, hence having degree coprime 
to $N$. Since $ I_{\rho',\bbE'} = I_{\rho'',\bbE''}$ we deduce that there 
exists $\phi'' \in  \Hom(\bbE, \bbE'')$ such that $$\hat \phi' \circ \rho' = 
\hat \phi'' \circ \rho'' = \hat \phi'' \circ u  \rho' , $$
which implies, after taking the dual, that $\phi' = \hat u \phi''$, hence 
$\phi'' = u\circ \phi''$. Since the degree of $\phi'$ is coprime to $N$, the 
same is true for $\phi''$, hence the matrix $\tomatrix{\phi''}{N}$ 
representing $\phi''$ in basis $(P,Q)\in S$ and  $(P'',Q'')\in S''$  is a 
matrix in $\GLtwo$. Moreover, since $\phi''$ lies in $\Hom(\bbE,\bbE'')$, 
the same matrix lies in $\spanned{\Gamma}$. Hence by our hypothesis we have 
that $\tomatrix{\phi''}{N}$ lies in $\spanned{\Gamma} \cap \GLtwo = \Gamma$, 
hence
\[
    S'' = \phi''(S) = u \circ \phi'(S) = u(S') \ ,
\]
i.e. $u$ is also an isomorphism between $\bbE'$ and $\bbE''$.
    \qed
\end{proof}

\subsection{Proofs from \Cref{sec:klpt}}
\label{sec:proofs_klpt}

\lemmaScalarKLPT*

\begin{proof}[Proof of \Cref{lemma:scalarKLPT}]
    Let's assume $I$ is a right $O$-ideal. 

    1. Since $I$ is a group and $N\in\Z$, $NI\subset I$. Since $I\cap 
    \O_R(I)=O$, $NI\subset NO$ so $(\Z+NI)\subset(\Z+I)\cap(\Z+NO)$.
    
    2. Let $z\in(\Z+I)\cap(\Z+NO)$. Then there are integers $\lambda,\mu$ such 
    that $x=z-\lambda\in I$ and $y=z-\mu\in NO$.
    Note $n_I=$min$(I\cap \Z)$ and observe that $N=$min$(NO\cap \Z)$.
    Since $\normklpt{I}$ divides $n_I$ and $\gcd(\normklpt{I},N)=1$, $n_I$ and 
    $N$ are coprime.
    By applying the Chinese remainders theorem to $(\lambda,n_I)$ and $(\mu,N)$ 
    we get $\nu\in \Z$ such that $z-\nu\in NO\cap I$.
    This proves that $(\Z+I)\cap(\Z+NO)\subset (\Z+(NO\cap I))$.

    3. Let us now show that $I\cap NO\subset NI$. Let $z\in I\cap NO$. Write 
    $z=Nx$ with $x\in O$. Since $\gcd(N,\normklpt{I})=1$, there are integers 
    $\lambda,\mu$ such that $\lambda N + \mu \normklpt{I}=1$.
    Then $\lambda N x+\mu \normklpt{I}x=x$. However, $\lambda N x\in I$ since 
    $Nx\in I$ and $\mu\normklpt{I}x\in I$ since $x\in O$ and $\normklpt{I}\in 
    I\cap \Z$. Therefore $x\in I$ and $z=Nx\in NI$. This implies $NO \cap 
    I\subset NI$

    4. With 2. and 3. we get $(\Z+I)\cap(\Z+NO)\subset (\Z+NI)$. Together with 
    1. this proves the result.
    \qed
\end{proof}

\propositionScalarKLPT*

\begin{proof}[Proof of \Cref{prop:scalarKLPT}]
    Following Leroux~\cite[Section 3.4]{Leroux}, in order to produce output under plausible assumptions,
    \IdealSuborderEichlerNorm requires to get $e$ such that $\ell^e$ is larger 
    than $p^2 (\torsion^2 N_c)^3 \normklpt{I}^2$ (multiplied additionally with 
    some logarithmic factor).

    Given how we compute $I$ in \Cref{sec:klpt_inputs}, we have $\normklpt I$ 
    smaller than $\torsion^2\lambda_4$ times some logarithmic factor, where 
    $\lambda_4$ is the forth minimum of the pullback of the rerandomized input 
    to $\O_0$. 
    
    Usually, this $\lambda_4$ is as large as $\sqrt{p}$. If it is 
    larger, one can replace $\O_0$ by another special extremal order (and 
    recompute a connecting ideal), which will result in different minima. 
    Similarly, we can assume that $N_c$ is of norm $\sqrt{p}$ up to logarithmic 
    factors, and use another special extremal order otherwise.

    The technique to use alternative special extremal orders when the forth minimum of the 
    $\O_0$-ideal is not small enough is used in the KLPT-based ideal-to-isogeny algorithm in the 
    first round SQIsign NIST submission~\cite[Section 2.5.2.1]{NISTPQC-ADD-R1:SQIsign23}.
    In another context, a similar argument is used in the second-round SQIsign NIST 
    submission~\cite[Section 3.2.5]{NISTPQC-ADD-R2:SQIsign24}
    and that variant is justified heuristically in~\cite[Section 9.3]{NISTPQC-ADD-R2:SQIsign24}.

    By substituting into $p^2 (\torsion^2 N_c)^3 \normklpt{I}^2$, we obtain $p^2\torsion^6\sqrt{p}^3\torsion^4\sqrt{p}^2$
    which equals $p^{\frac{9}{2}}\torsion^{10}$ and therefore proves the proposition.
    \qed
\end{proof}

\lemmaKLPTRandomization*

\begin{proof}[Proof of \Cref{lemma:klpt_randomization}]
    1. According to \cite[Lemma 5.3.1]{Leroux}, there is an $\omega_0\in O$ such 
    that $N_c$ is inert in $\Z[\omega_0]$. Let $\omega=\torsion\omega_0$. Then 
    $\omega\in\Z+\torsion O$ and $N_c$ inert in $\Z[\omega]$.
    
    2. For $\gamma$: Let $\gamma_0\in I$ of norm coprime to $N_c$ (exists since $\normklpt{I}$ coprime to $N_c$). 
    Then $\gamma = \torsion \gamma_0\in I\cap \Z+\torsion O$, and $\gamma$ has norm $\torsion^2\normklpt{\gamma_0}$ not divisible by $N_c$, so that $\gamma\not\in N_c\O_R(I)$.
    \qed
\end{proof}

\lemmaBorelKLPT*

\begin{proof}[Proof of \Cref{lemma:borelKLPT}]
    $I$ is included in $\O_R(I)=\O_L(J)$ and $J$ in $\O_L(J)=\O_R(I)$, we have 
    $IJ$ included in both $I$ and $J$, and therefore 
    $(\Z+IJ)\subset(\Z+I)\cap(\Z+J)$.
    
    For the converse, note that $\Z+K=\O_L(K)\cap \O_R(K)$ for $K$ a primitive 
    quaternion ideal of a maximal order $\O_L(K)$.
    Since we obviously have $(\O_L(I)\cap\O_R(I))\cap 
    (\O_L(J)\cap(\O_R(J)))=\O_L(IJ)\cap\O_R(IJ)\cap\O_L(J)\subset 
    \O_L(IJ)\cap\O_R(IJ)$, we obtain $(Z+I)\cap(\Z+J)\subset \Z+IJ$.
    
    In conclusion, $ (\Z+IJ)=(\Z+I)\cap(\Z+J)$.
    \qed
\end{proof}

\subsection{Details on adapting Leroux' justification of rerandomization for \Cref{sec:klpt_rerandomization}}
\label{sec:rerandomization_details}

In \cite[Section 2.3]{Leroux}, Leroux uses the following notations and notions:

\begin{definition}
The ideal class set Cl$(\O_2)$ of an order $\O_2$ is the set of equivalence classes of its left ideals under the equivalence relation $K\sim J$ if and only if there is an invertible quaternion $\alpha$ such that $K=J\alpha$.
\end{definition}

\begin{definition}
Let $\O_2$ a maximal order in the quaternion algebra, and $O_\cap$ an Eichler order contained in $\O_2$.
Let $I_\cap$ the unique primitive left $\O_2$-ideal such that $\O_\cap=\Z+I_\cap$, and $N_\cap$ its norm.

For $K,J$ left $\O_2$-ideals of norm coprime to $N$, say that $K\sim_{\O_\cap} J$ if and only if $(K\cap \O_\cap)\sim (J\cap \O_\cap)$ in the sense of equivalence as left $\O_\cap$-ideals.

The ideal class set of $O_2$ for $O_\cap$ is denoted Cl$_{\O_\cap}(\O_2)$ and defined as the set of equivalence classes of the left ideals of $\O_2$ of norm coprime to $N_\cap$ for the equivalence relation $\sim_{\O_\cap}$.
\end{definition}

With these notions, he states:

\begin{proposition}~\cite[Proposition 2.3.12]{Leroux}
\label{propositionKLPTrerandomization}

Let $\O_1$ a maximal order in the quaternion algebra, and $\O_E$ an Eichler suborder of $\O_1$.

For $C$ in Cl$(\O_1)$, take $L\in C$ and define $\O_C:=\O_R(L)$. 

If $\O_C^\times=\langle \pm 1\rangle$, then for any $\gamma \in L\setminus N\O_C$ and quadratic order $s=\Z[\omega_s]$ of discriminant $\Delta_S$ inside $\O_1$ in which $N$ is inert, the map:

$$ \Theta:\mathbb{P}^1(\Z/N\Z)\longrightarrow \text{Cl}_{\O_E}(C)$$

$$(A:B)\longmapsto \chi_L((C+\omega_sD)\gamma)$$

is a bijection. In particular, $|Cl_{\O_E}(C)|=N+1$.

\end{proposition}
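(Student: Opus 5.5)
The plan is to translate both sides of $\Theta$ into level structures and reduce to the simply transitive action of a nonsplit torus on $\mathbb{P}^1(\Z/N\Z)$. Throughout, $N$ is the norm of the ideal $I_\cap$ with $\O_E=\Z+I_\cap$; since $N$ is inert in $\Z[\omega_s]$, it is prime. I read the map as $(C:D)\mapsto\chi_L((C+\omega_sD)\gamma)$, with $(C:D)$ the projective coordinates in place of the statement's $(A:B)$.

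\emph{Parametrising the target.} Fix a curve $E_1$ with $\End(E_1)\simeq\O_1$. By the Borel case of \Cref{sec:orders_level_structures} (Arpin's correspondence), the Eichler order $\O_E$ of level $N$ is $\OrderEnd(E_1,S_1)$ for a Borel structure $S_1$, i.e.\ a cyclic subgroup $K_1\subset E_1[N]$. Every left $\O_1$-ideal $J\in C$ of norm prime to $N$ corresponds by \Cref{th:deuring-corr} to an isogeny $\varphi_J\colon E_1\to E_C$ of degree prime to $N$. By \eqref{eq:IcapO}, $J\cap\O_E=I_{\varphi_J,(E_C,\varphi_J(K_1))}$.

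Since the Borel group is additively stable, \Cref{thm:Deuring_with_level_structure2} shows that $J\sim_{\O_E}J'$ if and only if $(E_C,\varphi_J(K_1))\simeq(E_C,\varphi_{J'}(K_1))$. This holds if and only if the two cyclic subgroups differ by an automorphism of $E_C$. The hypothesis $\O_C^\times=\langle\pm1\rangle$ means $\mathrm{Aut}(E_C)=\{\pm1\}$, which acts trivially on subgroups. Hence $\mathrm{Cl}_{\O_E}(C)$ injects into the set of cyclic subgroups of $E_C[N]$, i.e.\ into $\mathbb{P}^1(\Z/N\Z)$. By \Cref{lem:surjectivity_torsion} every such subgroup is realised by some isogeny of degree prime to $N$, so this is a bijection and $|\mathrm{Cl}_{\O_E}(C)|=N+1$.

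\emph{Computing $\Theta$ in these coordinates.} For $\beta\in L$, the ideal $\chi_L(\beta)$ corresponds, via \eqref{eq:Xi}, to $\xi_L(\beta)=\tfrac{1}{\deg\varphi_L}\varphi_L\circ\dual\beta$. So the class $\Theta(C:D)$ is the line $\varphi_L\circ\dual\gamma\circ(C+\dual\omega_sD)(K_1)\subset E_C[N]$, up to invertible scalars. The map $\varphi_L$ has degree prime to $N$ and is a fixed bijection on lines, so it suffices to show that
\[
(C:D)\longmapsto \dual\gamma\,(C+D\dual\omega_s)(K_1)
\]
is a bijection $\mathbb{P}^1(\Z/N\Z)\to\mathbb{P}^1(E_1[N])$.

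\emph{Key step: the torus.} Because $N$ is inert, $\Z[\omega_s]/N\cong\F_{N^2}$ acts on $E_1[N]\cong\F_N^2$, making it a one-dimensional $\F_{N^2}$-vector space. The elements $C+D\dual\omega_s$ with $(C:D)\in\mathbb{P}^1$ are exactly the representatives of $\F_{N^2}^\times/\F_N^\times$, and this group acts simply transitively on the $N+1$ lines of $E_1[N]$. Hence $(C:D)\mapsto(C+D\dual\omega_s)(K_1)$ is already a bijection onto all lines. It remains that $\dual\gamma$ induces a bijection on lines. This is automatic if $\gamma$ acts invertibly mod $N$, i.e.\ $N\nmid\norm(\gamma)$.

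\emph{Expected obstacle.} The main difficulty is that the hypothesis only says $\gamma\notin N\O_C$, so $\gamma$ acts nonzero but possibly with rank $1$ mod $N$. In that case $\dual\gamma$ collapses lines, and $\chi_L$ of such elements has norm divisible by $N$, leaving the domain of $\mathrm{Cl}_{\O_E}$. I will first try to handle this by replacing the line-level argument with an argument on the element $(C+D\omega_s)\gamma$ modulo $N\O_C$. I would show that these $N+1$ residues are pairwise non-proportional and that $\chi_L$ of non-proportional residues lie in distinct $\O_E$-classes, using $\O_C^\times=\pm1$ to rule out coincidences. If that fails, I will add the harmless assumption $N\nmid\norm(\gamma)$, as in \Cref{lemma:klpt_randomization}, where $\gamma$ is chosen of norm prime to $N_c$, which is exactly the case needed in \Cref{alg:scalar_rerandomization}. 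Under this assumption, injectivity and the cardinality count together give bijectivity.
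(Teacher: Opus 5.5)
The paper does not prove this proposition. It is quoted from Leroux's thesis, so there is no in-paper argument to compare against.

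Your route is a genuine proof built from the paper's own tools. You identify $\O_E$ with $\OrderEnd(E_1,S_1)$ for a Borel structure $K_1$. You use \eqref{eq:IcapO}, \Cref{thm:Deuring_with_level_structure2} and \Cref{lem:surjectivity_torsion} to identify $\mathrm{Cl}_{\O_E}(C)$ with the $N+1$ cyclic subgroups of $E_C[N]$; this is exactly where $\O_C^\times=\langle\pm1\rangle$ enters. Then you write $\gamma=\psi\circ\varphi_L$ with $\psi\in\Hom(E_C,E_1)$, so that $\xi_L((C+D\omega_s)\gamma)=\dual\psi\circ(C+D\dual\omega_s)$. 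This lets you read $\Theta$ as the simply transitive action of $\F_{N^2}^\times/\F_N^\times$ on the lines of $E_1[N]$, followed by $\dual\psi$. The argument is correct and makes both the count $N+1$ and the bijectivity transparent, provided $N\nmid\norm(\gamma)$, i.e.\ $N\nmid\norm(L)$ and $N\nmid\norm(\gamma)/\norm(L)$.

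The obstacle you flag is real, and your option (a) cannot work, because the statement fails under the literal hypothesis.
\begin{itemize}
\item Since $\norm(L)\O_C=\overline{L}L\subseteq L$ and $\gcd(\norm(L),N)=1$, one has $L\cap N\O_C=NL$. So $\gamma\notin N\O_C$ only says $\psi\notin N\Hom(E_C,E_1)$.
\item By \Cref{lem:surjectivity_torsion} you can choose $\psi$ acting with rank one on $E_C[N]$. Then $\gamma\in L\setminus N\O_C$.
\item But $\norm\bigl(\chi_L((C+D\omega_s)\gamma)\bigr)=\norm(C+D\omega_s)\deg\psi$ is divisible by $N$ for every $(C:D)$. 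So $\Theta$ does not even take values in $\mathrm{Cl}_{\O_E}(C)$, which only contains ideals of norm prime to $N$.
\end{itemize}
The residues of $(C+D\omega_s)\gamma$ modulo $NL$ are still pairwise non-proportional, so it is the second half of option (a) that breaks. Indeed, the line $\dual\psi\bigl((C+D\dual\omega_s)(K_1)\bigr)$ is the same for all but one $(C:D)$.

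So commit to option (b) and state the hypothesis as $N\nmid\norm(\gamma)/\norm(L)$. This is precisely what \Cref{lemma:klpt_randomization} supplies: there $\gamma$ is the level times some $\gamma_0\in I$ of norm prime to $N_c$, and $N_c$ plays the role of $N$. The use of the proposition in \Cref{alg:scalar_rerandomization} is therefore unaffected.

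Two minor points.
\begin{itemize}
\item Faithfulness of $\Z[\omega_s]/N\cong\F_{N^2}$ on $E_1[N]$ needs no argument, since a unital ring map out of a field is injective.
\item The identification $\O_E=\OrderEnd(E_1,S_1)$ needs $N>2$. Inertness does not exclude $N=2$, and there the span of the Borel group is $\{\smt{a}{b}{0}{a}\}$ rather than all upper-triangular matrices. That case should be run directly with isogenies mapping $K_1$ into the target subgroup.
\end{itemize}
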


From this Proposition, Leroux then derives the randomization algorithm~\cite[Algorithm 34]{Leroux} used in SQIsign~\cite{AC:DKLPW20}.
There, $(A:B)$ is uniformly randomly sampled in $\mathbb{P}^1(\Z/N\Z)$ and then used to map an ideal of a maximal order $\O_C$ (of norm coprime to $N_E$ the norm of the unique primitive left $O_2$-ideal $I_E$ with $\Z+I_E=\O_E$) to an equivalent ideal which is in a uniformly random equivalent class for equivalence in $\O_E$.

From this rerandomization in \SQIsign, we can build the one for \SQInstructor with  1-dimensional response representation.
Recall our goal in \Cref{sec:klpt_rerandomization}: 

\begin{enumerate}
    \item Randomize a left ideal $I$ of a maximal order $\O$ with respect to the Eichler order $\O\cap \O_0=\Z+I_c$, where $\O_0$ is the usual maximal order and $I_c$ the unique primitive ideal connecting $O_0$ and $\O$.
    \item In addition, take the equivalent of $I$ in such a way that the equivalence preserves the class of $I$ with respect to $\Z+\torsion\O$. 
\end{enumerate}

The work of Leroux summarized above and \Cref{lemma:klpt_randomization} allow to achieve this:

Point 1. is exactly what \cite[Algorithm 34]{Leroux} does using his proposition 2.3.12 explained above (with $\O$ corresponding to $\O_C$, $\O_0$ to $\O_1$, $\Z+I_c=\O_0\cap\O$ to $\O_e$ and $I$ is like $L$ a representative of a class $C$).

Point 2. can be achieved by choosing $\gamma$ and $\omega_S$ such that $(C+\omega_sD)\gamma\in(\Z+\torsion\O)$. 
This is possible thanks to \Cref{lemma:klpt_randomization}.

The condition of coprimality of $I$ to the norm of $I_c$ is not a concern, since $I_c$ is modified before the rerandomization so that its norm is a large prime, which is very unlikely to divide the norm of $I$.

\section{\EUFCMA Security for \Cref{sec:hd} protocol using non-interactive assumptions}
\label{sec:proofs_sqisign_protocol_new}

In~\cite{C:ABDPW25} the authors  give a reduction of \SQIsign \EUFCMA security 
to non-interactive assumptions.
To do this they:
\begin{itemize}
    \item introduce the notion of \emph{hint-assisted} zero knowledge, where the 
        simulator is given hints sampled from a distribution \hint{E}.
    \item consider two distributions and conjecture them to be 
        undistinguishable. One is simulating the \SQIsign distribution of the 
        responses and challenges, while the other is sampling uniformly over the 
        set of isogenies of degree $d(2^{e'} -d)$ and codomain $E$, for $d$ in a 
        fixed range, then pushing them via another uniform isogeny of smooth 
        degree. We refer to~\cite[Problem 3]{C:ABDPW25} for the exact
        definition of the distributions and the aforementioned assumption.
\end{itemize}
Using this framework,  \SQIsign, as 
instantiated in~\cite{NISTPQC-ADD-R2:SQIsign24}, \EUFCMA security reduces to the 
aforementioned
indistinguishability assumption and the following generalization of 
the one endomorphism problem used already in \Cref{sec:soundness_general}.

\begin{problem}[$q\text{-}\hintOneEndp$, Problem 4~\cite{C:ABDPW25}]
    \label{prob:oneendhint}
    Given a curve $E$ sampled from the stationary distribution on 
    the set of supersingular elliptic curves over \Fpsq,
    and $q$ hints $h_1, \ldots, h_q \leftarrow \hintunif{E}$, find 
    an endomorphism in $\End(E) \setminus \Z$ in efficient representation.
\end{problem}

Here we define for \SQItorsion also two distributions \hintunif{E} and 
\hintsim{E} as in \Cref{fig:hint_dist} and give the same result for 
\SQInstructor.

Consider the following indistinguishability assumption between the two 
distributions:
\begin{problem}[$q\text{-}\hintdist$]
    \label{prob:q_hintdist}
    Let $E$ be a curve obtained from the KeyGen of \SQItorsion.
    Let $(h_1, \ldots, h_q)$ be 
    sampled with probability $\tfrac 12$ all
    from $\hintsim{E}$ and with probability $\tfrac 12$ all from
    $\hintunif{E}$.
    Given $E$ and the tuple, distinguish between the two distributions.
\end{problem}
For an algorithm $\calA$ we denote its advantage on 
\Cref{prob:q_hintdist} by
\begin{equation}
    \Adv^{q\text{-}\hintdist}(\calA) =
    \left| \prob[\calA(E,x) = 1 |\hintsim{E}] 
    - \prob[\calA(E,x) = 1 | \hintunif{E}] \right| \ .
    \label{eq:adv_q_hintdist}
\end{equation}

Using the above hint definitions and the framework from~\cite{C:ABDPW25} we
can prove:

\begin{restatable}{theorem}{hdEUFCMA}
    \label{thm:hd_eufcma}
    For any PPT adversary \calA against the {\sf \EUFCMA} security of the 
    signature scheme \SQItorsion (\Cref{sec:sqisign_protocol_new}), there exists 
    polynomial time algorithms \calB,\calD such that:
    \begin{multline*}
        \Adv_{\SQItorsion}^{\sf \EUFCMA}(\calA) \le
        (q+1) \cdot \left( \Adv^{s\text{-}\hintOneEndp}(\calB) +
          \right.  \\ \left. +
              \Adv^{s\text{-}\hintdist}(\calD) + 2^{-\secpar} + 
          \frac{1}{2\sqrt{p}} \right) 
        + (2q + s + 2)\frac{s}{\sqrt{p}} \ ,
    \end{multline*}
    where $q$ and $s$ are bounds in the number of queries that \calA makes to
    the random oracle and the signing oracle respectively.
\end{restatable}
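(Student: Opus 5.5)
We follow the game-hopping structure of the \EUFCMA proof of~\cite{C:ABDPW25} for \SQIsign, checking that each hop survives the passage from torsion-point challenges to $Z$-level-structure challenges. The first step is to recast the identification protocol $\Prot_\SQItorsion$ of \Cref{sec:hd} in their abstract framework. This requires three properties.
\begin{itemize}
\item \emph{Hint-assisted honest-verifier zero knowledge}: a simulator that receives hints from $\hintsim{E}$ outputs transcripts $(\commit,\ch,\resp)$ distributed exactly as honest non-aborting transcripts.
\item \emph{2-special soundness} for the one-endomorphism relation, which is \Cref{thm:soundness}.
\item \emph{High commitment min-entropy}, which we inherit from~\cite[Lemma~4.1]{C:ABDPW25}. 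The argument is as given after \Cref{thm:sqisign_hd_sec_oracles}: the commitment is still essentially a random supersingular $j$-invariant.
\end{itemize}
The min-entropy bound is what produces the collision terms $(2q+s+2)\frac{s}{\sqrt p}$ and $\frac{1}{2\sqrt p}$. These arise when the random oracle is programmed at $(j(\publicCurve),j(\commitCurve),\msg)$ during signing queries.

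Concretely, we define $\hintsim{E}$ to be exactly the output of steps 1--6 of the simulator in the proof of \Cref{thm:sqisign_hd_sec_oracles}, with the \UTO and \FIDIO calls folded into the hint: a uniformly random response $\respIsogeny:(E,S_\pk)\to(\commitCurve,S_\cmt)$ of odd degree $\le\degreeBound$, together with the auxiliary isogeny $\auxIsogeny$ of degree $2^a-d$. Given such a hint, the simulator reads off $\chMatrix$ from $S_\cmt$ and outputs the transcript. That this distribution agrees with the honest one is the content of the proof of \Cref{thm:sqisign_hd_sec_oracles}. The distribution $\hintunif{E}$ is then defined as in~\cite{C:ABDPW25}: isogenies of degree $d(2^{e'}-d)$ from $E$, pushed through a uniform smooth isogeny. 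We will check that the level structure carried by the simulated response, namely $S_\cmt=\respIsogeny(S_\pk)$, can be recomputed from the hint. That way the hint reveals no more than in the \SQIsign case, and the reduction $\calB$ can use $\hintunif{E}$-hints to answer signing queries.

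The games are then as follows.
\begin{enumerate}
\item $G_0$ is the real \EUFCMA game.
\item $G_1$ answers signing queries with the hint-assisted simulator using $\hintsim{E}$ hints, programming the random oracle. This changes the success probability only by the programming collisions.
\item $G_2$ replaces $\hintsim{E}$ by $\hintunif{E}$. The change is bounded by $\Adv^{s\text{-}\hintdist}(\calD)$, where $\calD$ simulates $G_1/G_2$ from its $s$ hints.
\item In $G_2$ we apply the forking lemma with the $(q+1)$ guessing loss. A forgery with challenge space of size $[\GLtwo:Z]\approx 2^\secpar$ (this gives the $2^{-\secpar}$ term) yields two accepting transcripts with the same commitment and distinct $Z$-structures $S_\cmt\neq S_\cmt'$.
\end{enumerate}
By \Cref{thm:soundness}, $\beta=\dual\respIsogeny\circ\respIsogeny'$ then lies in $\End(\publicCurve)\setminus(\Z+\level\End(\publicCurve))$. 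In particular $\beta\notin\Z$, and it is given in efficient representation as a composition of two 2-dimensional isogeny representations. This is a solution to $s$-\hintOneEndp for $\calB$.

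The main obstacle will be the zero-knowledge hop. We must make sure that the honest transcript distribution, which is conditioned on the existence of an odd-norm $\alpha$ of norm below $\degreeBound\norm(I)$, i.e.\ on non-restarting, is exactly reproduced by $\hintsim{E}$, including the joint law of $(\commitCurve,S_\cmt)$. We will argue this from the fact that the honest prover samples $\alpha$ uniformly among such elements of $I_{\rho,\bbE_\cmt}$, which is isometric to $\Hom(\bbE_\pk,\bbE_\cmt)$ by \Cref{thm:Deuring_with_level_structure}. We will also use that $\commitCurve$ is statistically uniform and $S_\cmt$ is uniform by construction. Together these make $\respIsogeny$ uniform over the odd-degree elements of the lattice for a uniform target $\bbE_\cmt$. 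The remaining arithmetic, collecting the terms into the stated bound, is routine and mirrors~\cite[Theorem~5]{C:ABDPW25}.
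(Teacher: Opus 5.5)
Your skeleton matches the paper's proof. You verify the hypotheses of \Cref{thm:abdpw_main}: special soundness via \Cref{thm:soundness}, challenge space $[\GLtwo:\Gamma]\ge 2^{\secpar}$, min-entropy via \cite[Lemma~4.1]{C:ABDPW25}, and a $\hintsim{E}$-assisted simulator obtained from the oracle simulator of \Cref{thm:sqisign_hd_sec_oracles} by folding the oracle calls into the hint. You then trade $\hintsim{E}$ for $\hintunif{E}$ and land in $s$-\hintOneEndp.

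\textbf{The error is your $\hintunif{E}$.} You import it from \cite{C:ABDPW25}: isogenies of degree $d(2^{e'}-d)$ out of $E$, pushed through a uniform smooth isogeny. In \SQIsign that pushforward transports hints along the challenge isogeny. In \idprot there is no challenge isogeny, and the response starts at $\publicCurve$ itself. With the pushforward, your $\hintunif{E}$-hints are isogenies out of some $E_\tau\neq E$ (with overwhelming probability), while your $\hintsim{E}$-hints start at $E$. Checking the domain therefore distinguishes them, so $\Adv^{s\text{-}\hintdist}$ is essentially $1$ and the bound is vacuous. The statement refers to the distributions of \Cref{fig:hint_dist}. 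There, $\hintunif{E}$ returns a uniform $\Mat$, an odd $d\le\degreeBound$ with the prescribed weighting, a uniform degree-$d$ isogeny out of $E$, and an auxiliary isogeny of degree $2^a-d$, with no pushforward. The paper's closing remark makes exactly this point. Your reduction never uses the internal structure of $\hintunif{E}$, so the repair is just to use that distribution. As written, though, you bound the advantage by a different and trivially broken assumption.

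\textbf{Your accounting of the error terms also needs fixing.}
\begin{itemize}
\item The simulation is not exact. Honest commitments come from prime-norm ideals, so $j(\commitCurve)$ is only statistically close to uniform (\cite[Proposition~2.4]{C:ABDPW25}), and the paper bounds the hint-wHVZK advantage by $s/\sqrt p$.
\item Accordingly, $(2q+s+2)\frac{s}{\sqrt p}$ is $(q+s+1)s\cdot p^{-1/2}$ from min-entropy plus $(q+1)\cdot\frac{s}{\sqrt p}$ from this ZK distance.
\item $\frac{1}{2\sqrt p}$ is not a collision term. It is the statistical distance between the KeyGen distribution of $\publicCurve$ and the stationary distribution required by \hintOneEndp. $\calB$ must pay it when embedding its instance, and your proposal never accounts for this mismatch.
\item The classical forking lemma loses quadratically. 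The linear $(q+1)$ factor comes from the expected-polynomial-time extractor behind \Cref{thm:abdpw_main}. This is why the paper first applies that theorem as a black box, bounding everything by $\Adv^{\sf hintrel}_{R_{\OneEndp},\hintsim{E}}$. Only afterwards does it swap hints, inside the efficiently checkable hint-relation game. Swapping before extraction, as you do, is not wrong, but you then have to redo that extraction argument yourself.
\item The paper only claims the result when the restart probability is negligible. You should state that restriction rather than rely on conditioning on non-aborting transcripts.
\end{itemize}
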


We give the formal proof for the case in which the restarting probability of the 
signature procedure is negligible and we leave a generalization for the general case 
as a future work.
The proofs follows a similiar strategy to the one in~\cite{C:ABDPW25}, we 
given in \Cref{sec:proofs_sqisign_protocol_new} a detailed adaptation of the proof.
We refer to~\cite{C:ABDPW25} for a more in depth discussion on the security 
of \Cref{prob:oneendhint} and its relation to the Endomorphism Ring Problem 
(\Cref{prob:endring}).
We discuss the indistinguishability assumption and its differences with the one 
from~\cite{C:ABDPW25}.

\begin{figure}
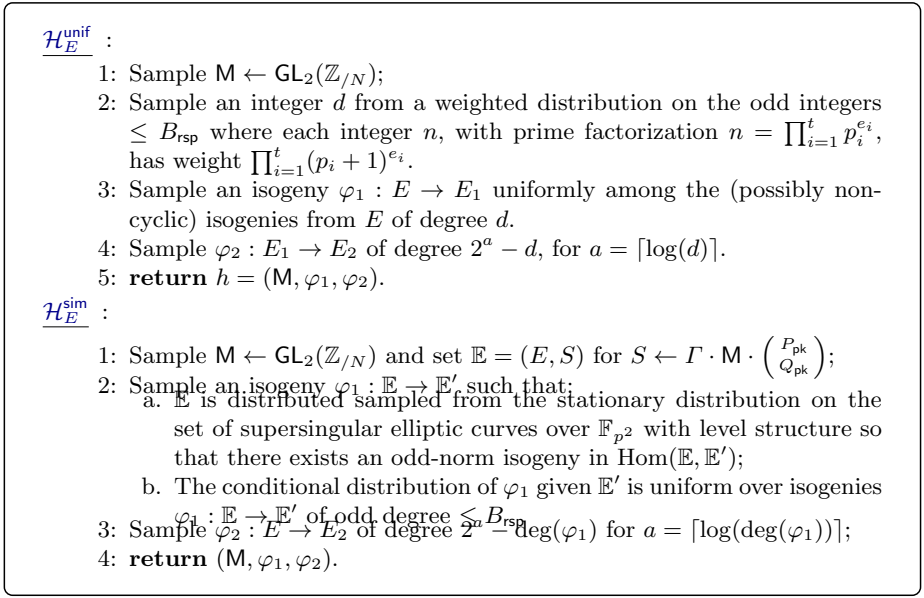

    {\centering
\begin{tcolorbox}[colframe=black, colback=white, boxrule=0.2mm, 
    width=\linewidth, fontupper=\small]
    {\footnotesize
    \begin{description}
        \item[\underline{\hintunif{E}}]:\label{dist:hint_unif} %
            \footnotesize
            \begin{algorithmic}[1]
                \State Sample $\Mat \leftarrow \GLtwo$;
                \State Sample an integer $d$ from a weighted distribution on the 
                odd integers $\leq \degreeBound$ where each integer $n$, with 
                prime factorization $n = \prod_{i = 1}^{t} p_i^{e_i}$, has 
                weight $\prod_{i=1}^{t} (p_i + 1)^{e_i}$.
                \State Sample an isogeny $\varphi_1: E \rightarrow E_1$ uniformly 
                among the (possibly non-cyclic) isogenies from $E$ of degree 
                $d$.\label{line:sample_isog}
                \State Sample $\varphi_2: E_1 \rightarrow E_2$ 
                of degree $2^{a}-d$, for 
                $a = \lceil \log(d) \rceil$.
                \State\Return $h = (\Mat, \varphi_1, \varphi_2)$.
            \end{algorithmic}

        \item[\underline{\hintsim{E}}]:\label{dist:hint_sim} %
    \begin{algorithmic}[1]
        \State Sample $\Mat \leftarrow \GLtwo$ and set $\curvelevel = (E,S)$ for  
        $S \gets \Gamma \cdot \Mat \cdot \vv{\Ppktor}{\Qpktor}$;
        \vspace{-1mm}
        \State Sample an isogeny $\varphi_1 : \bbE \to \bbE'$ such that:
        \vspace{-2mm} 
        \begin{enumerate}[label=\alph*.]
            \item $\bbE$ is distributed sampled from the stationary distribution on 
    the set of supersingular elliptic curves over \Fpsq with level structure so 
    that there exists an odd-norm isogeny in $\Hom(\bbE,\bbE')$;   
    \label{item:level_structure_rejection}
            \item The conditional distribution of $\varphi_1$ given $\bbE'$ 
                is uniform over isogenies $\varphi_1 : \bbE \to \bbE'$ of odd degree 
                $\leq\degreeBound$
        \end{enumerate}
        \vspace{-2mm}
        \State Sample $\varphi_2 : E \to E_2$ of degree $2^a - 
        \deg(\varphi_1)$ for $a = \lceil \log(\deg(\varphi_1)) \rceil$;
        \State\Return $(\Mat, \varphi_1, \varphi_2)$.
    \end{algorithmic}

\end{description}
}

\end{tcolorbox}
}
\vspace{-3mm}
    \caption{The hint distribution \hintunif{E} and \hintsim{E}. For the sake of 
    simplicity we added the sampling of \Mat in both the distributions, but it 
    is only relevant for \hintsim{E}.}
    \label{fig:hint_dist}
\end{figure}

From now we denote by $\Prot_\SQItorsion$ the identification protocol in which 
the challenge is a random element of $\Mattwo$ and $\sigprot$ 
the signature scheme obtained by applying the Fiat-Shamir transform to
$\Prot_\SQItorsion$ and modelling the hash function as a random oracle.
We denote by $\idprotoriginal$ the \SQIsign identification protocol from 
\cite{C:ABDPW25}.

For the sake of completeness we report here the hint-specific definitions from 
\cite[Section 3]{C:ABDPW25} we need for our proof.

\begin{definition} Let $R$ be a relation and $\Prot = 
    (P,V)$ a $\Sigma$-protocol for $R$.
    \begin{enumerate}
        \item A \emph{hint distribution $\hintdistr$ for $R$} is a collection of 
            distributions $\hintdistr = \{ \hintdistr_x\}_{ x \in R }$ with 
            codomains efficiently
            representable in $\lvert x \rvert$.
            We call the pair $(R, \hintdistr)$ a relation with hints.
        \item We say that $\Prot$ is \emph{$\hintdistr$-hint-assisted wHVZK} if 
            there exists a PPT algorithm, called the simulator $\Sim$, such that 
            for all $q = \poly(\secpar)$ and all PPT algorithms $\calA$ the 
            advantage in distinguishing a distribution of $q$ real transcripts 
            of $\Prot$ and $q$ simulated transcripts of $\Prot$ using $q$ 
            hints
            is negligible in $\secpar$.
            We refer to this advantage as 
            $\Advantage^{\sf hint-wHVZK}_{\Prot, \hintdistr_x, \Sim}(\calA,q)$.
        \item
            We say that $(R,\hintdistr)$ is a \emph{hard relation with hints} 
            if, in the following game, it holds for all PPT algorithms $\calA$ 
            and all $q=\poly(\lambda)$ that \begin{equation*}
                \Adv^{\sf hintrel}_{R, \hintdistr}(\calA,q)
                :=
                \probcond{(x, w^*) \in R}
                {(x, w) \gets \Gen_R(1^\secpar),\\
                h_1, \dots, h_q \gets \hintdistr_x, \\ w^* \gets \calA(x, h_1, 
            \dots, h_q)} = \negl(\secpar).
        \end{equation*}
\end{enumerate}
\end{definition}

We then recall the main theorem from~\cite{C:ABDPW25} that we use to
prove \Cref{thm:hd_eufcma}.
\begin{theorem}[\cite{C:ABDPW25}, Theorem 1]
    \label{thm:abdpw_main}
    Let $(R, \hintdistr)$ be a relation with hints.
    Let $\Prot$ be a $\Sigma$-protocol for the relation $R$ that has 
    challenge space $\mathcal{C}$ and is special-sound with respect to a 
    soundness relation $R$.
    Additionally, let $\Sim$ be a $\hintdistr$-hint-assisted simulator for 
    $\Prot$.

    For any PPT algorithm $\adv$ against the \EUFCMA of $\SIG[\Prot]$, there 
    exists a PPT algorithm $\bdv$ and an expected polynomial time algorithm 
    $\calD$ such that 
    \begin{multline}
        \Advantage^{\EUFCMA}_{\sigprot}(\adv) \leq 
        \left(q+1\right)\cdot 
        \left(
                \Adv^{\sf hintrel}_{R, \hintdistr}(\calD,s)
                 +  
                \Advantage^{\sf hint-wHVZK}_{\Prot, \hintdistr_x, \Sim}(\calB,s)
        +\frac{1}{\lvert \mathcal{C} \rvert}\right) \\
        + (q + s + 1)s \cdot \mathsf{MinEnt}( \Prot ),
    \end{multline}
    where $q$ and $s$ are upper-bounds on the number of queries that \adv makes to 
    the random oracle and to the signing oracle, respectively, and 
    $\mathsf{MinEnt}( \Prot )$ is the minimum entropy of the protocol $\Prot$.
\end{theorem}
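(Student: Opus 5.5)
The proof is a standard sequence of game hops for Fiat--Shamir in the random oracle model, adapted so that the reduction never needs the witness $w$, only the $s$ hints $h_1,\dots,h_s\gets\hintdistr_x$.

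\textbf{Step 1: remove the secret from the signing oracle.} Start from the real \EUFCMA game for $\SIG[\Prot]$, in which signing queries run the honest prover with $w$ and the random oracle $\mathsf{H}$ is lazily sampled. In Game~1, each signing query on $\msg$ is instead answered by feeding the next unused hint to $\Sim$. This gives a simulated transcript $(\cmt,\ch,\resp)$, and we program $\mathsf{H}(\cmt,\msg):=\ch$. Programming fails only if $(\cmt,\msg)$ was already defined, either by one of at most $q$ adversarial queries or by one of at most $s$ earlier signing queries, plus one for the final verification query. A union bound over the $s$ signing queries bounds this event by $(q+s+1)s\cdot\mathsf{MinEnt}(\Prot)$.

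\textbf{Step 2: bound the gap between real and simulated transcripts.} Conditioned on no programming collision, the distance between Game~0 and Game~1 is exactly the advantage of distinguishing $s$ real transcripts from $s$ hint-assisted simulated ones. A distinguisher $\calB$ embeds its $s$ given transcripts as the answers to the signing queries and simulates the random oracle itself. This contributes $\Advantage^{\sf hint-wHVZK}_{\Prot,\hintdistr_x,\Sim}(\calB,s)$.

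\textbf{Step 3: turn a forgery into an impersonation.} Game~1 can be run from $x$ and the hints alone. A forgery $(\msg^*,\cmt^*,\resp^*)$ with $\msg^*$ never signed must verify against $\ch^*=\mathsf{H}(\cmt^*,\msg^*)$. Without loss of generality the adversary queried this point, at most adding one query. Guess the index $i\in\{1,\dots,q+1\}$ of that query, losing a factor $q+1$. The result is a cheating prover for the interactive protocol that commits to $\cmt^*$ at query $i$, receives a uniform challenge there, and answers it. All other oracle answers and the hints are internal randomness, and signing-programmed points are distinct from $(\cmt^*,\msg^*)$ because $\msg^*$ is fresh.

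\textbf{Step 4 (main obstacle): extract a witness.} Apply the special-soundness knowledge extractor, in the style of \cite[Theorem~1]{sigma_protocols}, to this cheating prover. Rewind it on query $i$ with fresh challenges until two accepting transcripts with distinct challenges appear. Special soundness then yields $w^*$ with $(x,w^*)\in R$, in expected polynomial time, whenever the success probability exceeds the knowledge error $1/\lvert\mathcal{C}\rvert$. This gives an expected-polynomial-time $\calD$ with $\Adv^{\sf hintrel}_{R,\hintdistr}(\calD,s)\ge \epsilon_1/(q+1)-1/\lvert\mathcal{C}\rvert$, where $\epsilon_1$ is the adversary's success probability in Game~1. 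Rearranging and adding the losses from Steps~1 and~2 gives the claimed bound.

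The delicate point is making the rewinding consistent. The hints, the simulator's coins, and all oracle answers outside query $i$ must be frozen as part of the prover's random tape. Only then is the rewound prover a fixed deterministic algorithm, so that the extractor's expected-time analysis applies and the factor $q+1$ stays linear rather than becoming quadratic as in forking-lemma proofs.
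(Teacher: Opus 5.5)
Your architecture is the standard one and is sound: program the random oracle with hint-assisted simulated transcripts, guess the forgery's oracle query, then extract by special soundness. However, Step~4, the step you flagged yourself, does not deliver what the theorem asserts.

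Rewinding on a frozen tape ``until two accepting transcripts with distinct challenges appear'' never terminates on tapes with at most one accepting challenge. Its expected running time is therefore infinite as soon as such tapes have positive probability. Damg\aa{}rd's extractor avoids this by restarting on fresh tapes, but for a prover of running time $T$ and success probability $\epsilon$ its expected running time is $O(T/(\epsilon-1/\lvert\mathcal{C}\rvert))$; this is the form used in \Cref{thm:soundness}. Exceeding the knowledge error makes this finite, not polynomial: think of a forger whose advantage is negligible but far above $(q+1)/\lvert\mathcal{C}\rvert$. An extractor that always succeeds at that cost is also not what yields an inequality of the form $\Adv^{\sf hintrel}_{R,\hintdistr}(\calD,s)\ge\epsilon_1/(q+1)-1/\lvert\mathcal{C}\rvert$.

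What you need is the ``one extra acceptance'' extractor. Run the cheating prover once with a uniform challenge and stop if it rejects. Otherwise keep its tape and rerun with fresh uniform challenges until the next accepting run, whatever its challenge, and apply special soundness only if the two challenges differ. Consider a tape accepting with probability $\epsilon_r$. The expected number of runs is at most $2$: one run, plus, with probability $\epsilon_r$, a geometric number of reruns with mean $1/\epsilon_r$. The second accepting challenge is uniform among the $\epsilon_r\lvert\mathcal{C}\rvert$ accepting ones, so the success probability is $\epsilon_r-1/\lvert\mathcal{C}\rvert$ whenever $\epsilon_r>0$. Averaging over tapes gives an expected-polynomial-time $\calD$ with exactly your bound. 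Your tape-freezing remark is what makes this per-tape analysis legitimate.

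A smaller repair is needed in Step~1. You bound programming collisions inside the simulated game by $\mathsf{MinEnt}(\Prot)$, but that quantity controls the honest prover's commitments, not the simulator's. Insert an intermediate game in which signing uses honest transcripts with a uniformly sampled, programmed challenge. The collision term then concerns honest commitments, and the switch to simulated transcripts is exactly a hint-wHVZK distinguisher, with no conditioning needed.

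Finally, the paper gives no proof of this statement; it imports it from \cite{C:ABDPW25}. The factor $(q+1)$ in front of the hint-wHVZK term suggests the source first reduces to a hint-assisted passive-impersonation game and swaps transcripts inside it. Your ordering swaps transcripts before guessing the query, so it pays that advantage only once. This gives a slightly tighter bound that implies the stated one.
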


We can use the previous theorem to finally prove the \EUFCMA security of \sigprot.

\begin{proof}[Proof of \Cref{thm:hd_eufcma}]
    The proof is a straightforward adaption of the proof 
    of~\cite[Theorem~3]{C:ABDPW25} in which we use a slightly different 
    protocol and hint distribution.
    Let $R_{\OneEndp}$ be the relation \Cref{eq:rel-oneend}, in which the 
    instance is a supersingular elliptic curve and the witnesses are non-scalar 
    endomorphisms.
    The steps of the proof consists in the following:
    \begin{itemize}
        \item Prove that \idprot satisfies the hypothesis 
            of~\Cref{thm:abdpw_main}, in particular that it has 
            \hintsim{E}-hint-assisted wHVZK, thus reducing the security of 
            \sigprot to the relation with hints $(R_{\OneEndp}, \hintsim{E})$.
        \item As done in \cite[Theorem~3]{C:ABDPW25}, we 
            finally reduce the security of \sigprot to the security of 
            \Cref{prob:q_hintdist} and \Cref{prob:oneendhint} via a game-hopping 
            argument.
    \end{itemize}

    \fakepar{First step}
    We check the hypothesis from \Cref{thm:abdpw_main}. %
    By \Cref{thm:soundness} we know that \idprot is special-sound with respect 
    to $R_{\OneEndp}$, also  the challenge space is by definition of size 
    $[\GLtwo : \Gamma] \geq 2^\secpar$.
    Also, since the  generation of the commitment curve \commitCurve is the same 
    as in the \idprotoriginal protocol, by \cite[Lemma~4.1]{C:ABDPW25} we 
    have that $\mathsf{MinEnt}{\idprot} \leq {p}^{-\tfrac 12}$.
    Finally, we need to prove that \idprot has $\hintsim{E}$-hint-assisted 
    wHVZK, for this we provide a simulator $\Sim_{\idprot}$ in 
    \Cref{alg:simulator_hints} and prove that its output is undistinguishable 
    from the real protocol.
    It is clear that the challenge matrix is uniform in both the real and the
    simulated protocol.
    Without considering the rejecting of commitment without short odd degree 
    isogenies, by \cite[Proposition 2.4]{C:ABDPW25}, the distribution of 
    $j(\commitCurve)$ are statistically close. The torsion points 
    $\Pcomtor,\Qcomtor$ are sampled uniformly in both cases (note that in the 
    real case they are randomized via \commitMatrix and in the simulated one via 
    the uniform matrix \Mat), also we are rejecting  in the both cases the same 
    subset of level structures, i.e. the ones such that $\Hom(\bbE,\bbE')$ has 
    no odd degree isogeny.
    Thus, these distributions are still statistically close, with distance 
    bounded by $s \frac{1}{\sqrt{p}}$.
    With respect to the response information we remark that it is equivalent to 
    the information given by the response isogeny \respIsogeny and the uniformly 
    random auxiliary isogeny $\auxIsogeny' : \publicCurve \to \auxCurve'$. 
    Note that in both cases they are both sampled from the same distribution: 
    for the former the odd degree isogenies in $\Hom(\bbE,\bbE')$ while for the 
    latter they are uniformly random between the isogenies of the same degree.
    We conclude that $\Advantage^{\sf hint-wHVZK}_{\idprot, \hintsim{E}, 
    \Sim_{\idprot}}(\calA,q) \leq s \frac{1}{\sqrt{p}}$ and we can apply 
    \Cref{thm:abdpw_main}.

   \begin{algorithm}[t]
        \caption{Simulator $\Sim_{\idprot}$ for \idprot} \label{alg:simulator_hints}
        \begin{algorithmic}[1]
            \Require The public parameters for \idprot, the curve $\publicCurve$ and a hint 
            $h = (\Mat, \respIsogeny, \auxIsogeny') \in \hintsim{\publicCurve}$ 
            with $\respIsogeny: \publicCurve \rightarrow \commitCurve$
            and $\auxIsogeny: \commitCurve \rightarrow \auxCurve$.
            \Ensure Transcript of $\idprot$.
            \State $\Ppk, \Qpk$ a deterministic basis of $\publicCurve[\torsion]$;
            \State sample $\gamma \gets \Gamma$;
            \State $\vv{\Pcomtor}{\Qcomtor} \gets (\gamma \cdot \Mat) \cdot \respIsogeny \vv{\Ppk}{\Qpk}$;
            \State $\commit := \left(j(\commitCurve),\Pcomtor,\Qcomtor\right)$ and $\ch := \Mat$.
            \State $d \gets \deg(\respIsogeny) )$, $a \gets \lceil \log ( d ) \rceil$;
            \State $\varphi \gets [\respIsogeny]_*(\auxIsogeny') \circ 
            \respIsogeny : \publicCurve \to \auxCurve$;
            \State $(\Paux,\Qaux) \gets ([d^{-1}]\varphi(\Ppk), [d^{-1}]\varphi(\Qpk))$;
            \State $\resp := (a,\auxCurve, \Paux, \Qaux)$;
            \State\Return $(\commit,\ch,\resp)$.
        \end{algorithmic}
    \end{algorithm}

    \fakepar{Second step} As a consequence of the first step and by
    \Cref{thm:abdpw_main}, we have that for any PPT algorithm $\adv$ against the 
    \EUFCMA of $\sigprot$, there exists an expected polynomial time algorithm 
    $\calE$ such that
    \begin{multline*}
        \Advantage^{\EUFCMA}_{\sigprot}(\adv) \leq \left(q+1\right)\cdot 
        \Adv^{\sf hintrel}_{R_{\OneEndp}, \hintsim{E}}(\calE,s) +
        \\
        + (2q + s + 2)\frac{s}{\sqrt{p}} + \frac{q+1}{2^\secpar}\ .
    \end{multline*}
    We can then repeat the same game-hopping argument 
    from~\cite[Theorem~3]{C:ABDPW25} to conclude the proof.
    For this we construct \calB from \calE by simulating the hints from 
    \hintsim{E} with \hintunif{E} instead. Any difference in the success 
    probability between \calB and \calE is bounded by the advantage in solving
    \Cref{prob:q_hintdist} with $s$ queries.
    Thus we imply
    \begin{equation*}
    \begin{aligned}
        \Adv^{\sf hintrel}_{R_{\OneEndp}, \hintsim{E}}(\calE,s) \leq
        & 
        \Adv^{s\text{-}\hintdist}(\calA) +
        \Adv^{\sf hintrel}_{R_{\OneEndp}, \hintunif{E}}(\calB,s)
         \\
        \leq 
        &
        \Adv^{s\text{-}\hintdist}(\calA) +
        \Adv^{s\text{-}\hintOneEndp}(\calB) + \frac{1}{2\sqrt{p}} \ .
    \end{aligned}
    \end{equation*}
    The last inequality is due to the fact that for \hintOneEndp Problem we are 
    considering a curve sampled from the stationary distribution instead so we 
    need to account for the statistical distance between the two distributions, 
    as done also in the proof of~\cite[Theorem~3]{C:ABDPW25}.
    \qed
\end{proof}

\paragraph{Hardness of distinguishing the hint distributions.}
    \label{rem:distinguishing_hints}
    As in \cite{C:ABDPW25}, we argue that \Cref{prob:q_hintdist} is 
    computationally hard. Both the hint distributions,
    \hintsim{E} and \hintunif{E}, provide a matrix \Mat and two isogenies 
    $\varphi_1, \varphi_2$ such that their degree sum to a power of $2$. 
    Actually both \Mat and $\varphi_2$ are sampled uniformly in both the two 
    distributions, so the only difference lie in the sampling of $\varphi_1$.
    The distribution of $\varphi_1$ in both cases depends on the bound 
    \respDeg, and for a large enough bound, the statistical distance between the 
    two curves with level structure is negligible since we consider a level 
    structure containing the scalar 
    matrices~\cite{codogni2025spectraltheoryisogenygraphs}.
    Moreover, note that the set of curves with level structure $\bbE'$
    excluded from \hintsim{E} is the same of the codomains of $\varphi_1$ in 
    \hintunif{E} since for them we have by definition the existence of an odd 
    degree isogeny from $\bbE$ to $\bbE'$.

\begin{remark}[Differences $q\text{-}\hintdist$ in~\cite{C:ABDPW25}]
    In~\cite[Problem 3]{C:ABDPW25} the authors consider an hint 
    distribution statistically close to the real one and the pushforward of 
    hints sampled from a uniform hint distribution \hintunif{E} through cyclic 
    powersmooth degree isogenies.
    Since for our protocol there is no challenge isogeny involved, only a level 
    structure, there is no third curve involved and we can directly consider the 
    uniform distribution \hintunif{\publicCurve} to sample the hints.
    Also, we argue that the use of a level structure does not fundamentally 
    change the hardness of distinguishing the two distributions, as discussed in 
    \Cref{sec:homsets_and_lvl_structures_love_story} the lattice $\Hom(\bbE, 
    \bbE')$ is a sublattice of $\Hom(E, E')$ enjoying similar properties. 
    Moreover, as already noted the curve with level structures rejected in 
    \Cref{item:level_structure_rejection} can be made negligible.
\end{remark}

\end{document}